\newcommand{\noun}[1]{\textsc{#1}}
\providecommand{\tabularnewline}{\\}
\numberwithin{equation}{section}
\numberwithin{figure}{section}
\theoremstyle{plain}
\newtheorem{thm}{\protect\theoremname}[section]
\theoremstyle{definition}
\newtheorem{example}[thm]{\protect\examplename}
\theoremstyle{definition}
\newtheorem{defn}[thm]{\protect\definitionname}
\theoremstyle{plain}
\newtheorem{conjecture}[thm]{\protect\conjecturename}
\theoremstyle{remark}
\newtheorem{rem}[thm]{\protect\remarkname}
\theoremstyle{plain}
\newtheorem{lem}[thm]{\protect\lemmaname}
\theoremstyle{plain}
\newtheorem{prop}[thm]{\protect\propositionname}
\theoremstyle{plain}
\newtheorem{cor}[thm]{\protect\corollaryname}
\setlist{leftmargin=\parindent,align=left,labelwidth=\parindent,labelsep=-0.3em}
\theoremstyle{plain}
\newtheorem{ques}[thm]{Question}
\providecommand{\conjecturename}{Conjecture}
\providecommand{\corollaryname}{Corollary}
\providecommand{\definitionname}{Definition}
\providecommand{\examplename}{Example}
\providecommand{\lemmaname}{Lemma}
\providecommand{\propositionname}{Proposition}
\providecommand{\remarkname}{Remark}
\providecommand{\theoremname}{Theorem}
\begin{document}
\global\long\def\cay{\mathrm{Cay}}%
\global\long\def\sch{\mathrm{Sch}}%
\global\long\def\schone{\mathrm{Sch}\left(S_{n}\curvearrowright\left[n\right],\Sigma\right)}%
 
\global\long\def\schk#1{\sch\left(S_{n}\curvearrowright\left[n\right]_{#1},\Sigma\right) }%
 
\global\long\def\std{\mathrm{std}}%
 
\global\long\def\sgn{\mathrm{sgn}}%
 
\global\long\def\triv{\mathrm{triv}}%
 
\global\long\def\supp{\mathrm{supp}}%
 
\global\long\def\T{\mathfrak{eight}}%
 
\global\long\def\x{{\cal \tilde{\chi}}}%
 
\global\long\def\defi{\stackrel{\mathrm{def}}{=}}%

\title{Aldous' Spectral Gap Conjecture for Normal Sets}
\author{Ori Parzanchevski and Doron Puder}
\maketitle
\begin{abstract}
Let $S_{n}$ denote the symmetric group on $n$ elements, and $\Sigma\subseteq S_{n}$
a symmetric subset of permutations. Aldous' spectral gap conjecture,
proved by Caputo, Liggett and Richthammer \cite{caputo2010proof},
states that if $\Sigma$ is a set of transpositions, then the second
eigenvalue of the Cayley graph $\cay\left(S_{n},\Sigma\right)$ is
identical to the second eigenvalue of the Schreier graph on $n$ vertices
depicting the action of $S_{n}$ on $\left\{ 1,\ldots,n\right\} $.
Inspired by this seminal result, we study similar questions for other
types of sets in $S_{n}$. Specifically, we consider normal sets:
sets that are invariant under conjugation. Relying on character bounds
due to Larsen and Shalev \cite{larsen2008characters}, we show that
for large enough $n$, if $\Sigma\subset S_{n}$ is a full conjugacy
class, then the second eigenvalue of $\mathrm{Cay}\left(S_{n},\Sigma\right)$
is roughly identical to the second eigenvalue of the Schreier graph
depicting the action of $S_{n}$ on ordered $4$-tuples of elements
from $\left\{ 1,\ldots,n\right\} $. We further show that this type
of result does not hold when $\Sigma$ is an arbitrary normal set,
but a slightly weaker one does hold. We state a conjecture in the
same spirit regarding an arbitrary symmetric set $\Sigma\subset S_{n}$,
which yields surprisingly strong consequences.
\end{abstract}
\hypersetup{     colorlinks,     linkcolor={black} }\tableofcontents{}\hypersetup{     colorlinks,     linkcolor=[rgb]{0.8,0.12,0.12} }

\section{Introduction\label{sec:Introduction}}

Consider a finite group $G$ and a symmetric subset $\Sigma\subseteq G$,
namely, $g\in\Sigma\Rightarrow g^{-1}\in\Sigma$. The $\left|G\right|\times\left|G\right|$
adjacency matrix $A$ of the Cayley graph $\cay\left(G,\Sigma\right)$
is symmetric and equals 
\[
\sum\nolimits _{g\in\Sigma}\rho_{\mathrm{reg}}\left(g\right),
\]
where $\mathrm{reg}$ is the right regular representation of $G$,
namely, $\rho_{\mathrm{reg}}\left(g\right)$ is the permutation matrix
depicting multiplication from the right by $g$. Recall that the regular
representation of $G$ decomposes as a direct sum of all complex irreducible
representations of $G$ (irreps for short), each appearing with multiplicity
identical to its dimension\footnote{Throughout this paper we use some standard facts from the theory of
group representations and, more specifically, from the theory of representations
of the symmetric groups $S_{n}$. Good references are \cite{fulton1991representation}
for the general theory, \cite{fulton1997applications} for representations
of $S_{n}$, and \cite{diaconis1988group} for applications to probability.}. An appropriate change of basis thus turns $A$ into a block-diagonal
matrix, with $\dim\left(\rho\right)$ blocks of size $\dim\left(\rho\right)\times\dim\left(\rho\right)$
for every irrep $\rho$ of $G$. The value of each of these $\dim\left(\rho\right)$
blocks is $\sum_{g\in\Sigma}\rho\left(g\right)$. This shows that
the multiset of eigenvalues of $A$ can be partitioned into sub-multisets,
each of which is associated with some $\rho\in\widehat{G}$, where
$\widehat{G}$ is the set of (isomorphism types of) irreps of $G$.
For example, the largest eigenvalue of $A$ is $\left|\Sigma\right|$:
this is the eigenvalue corresponding to the constant eigenfunction,
and it is associated with the trivial representation of $G$.

The current work focuses on the symmetric group $S_{n}$. We consider
the eigenvalues associated with the trivial and the sign representations
to be \emph{trivial}, and denote by \marginpar{$\lambda\left(S_{n},\Sigma\right)$}$\lambda\left(S_{n},\Sigma\right)$
the largest non-trivial eigenvalue of a symmetric set $\Sigma\subseteq S_{n}$.
If we think of $\Sigma$ as the element $\sum_{\sigma\in\Sigma}\sigma$
of $\mathbb{R}\left[S_{n}\right]$, we can thus write
\[
\lambda\left(S_{n},\Sigma\right)=\max_{\rho\in\widehat{S_{n}}\setminus\left\{ \mathrm{triv},\mathrm{sgn}\right\} }\lambda_{1}\left(\rho\left(\Sigma\right)\right);
\]
here $\lambda_{1}\left(\rho\left(\Sigma\right)\right)$ marks the
largest eigenvalue of the matrix $\rho\left(\Sigma\right)=\sum_{\sigma\in\Sigma}\rho\left(\sigma\right)$,
which has only real eigenvalues as $\Sigma$ is symmetric. Alternatively,
$\lambda(S_{n},\Sigma)$ is simply the largest eigenvalue of $A$
whose eigenvector is orthogonal both to the constant functions, and
to the indicator function of $A_{n}\leq S_{n}$.

Many properties of a regular graph are related to the value of its
second largest eigenvalue. Primarily, the spectral gap $\lambda_{1}-\lambda_{2}$
is a good measure for the extent to which the graph is ``expanding''
(see, e.g., the surveys \cite{hoory2006expander,lubotzky2012expander}).
Around 1992, David Aldous conjectured the following: whenever $\Sigma\subset S_{n}$
is a set of transpositions, the largest non-trivial eigenvalue $\lambda\left(S_{n},\Sigma\right)$
is equal to the largest eigenvalue associated with the \emph{standard
}irrep,
\[
\mathrm{std}=\left\{ \left(x_{1},\ldots,x_{n}\right)\in\mathbb{C}^{n}\,\middle|\,x_{1}+\ldots+x_{n}=0\right\} ,
\]
which corresponds to the Young diagram $\left(n-1,1\right)$.\footnote{Occasionally, several different irreps give rise to an eigenvalue
which is equal to $\lambda\left(S_{n},\Sigma\right)$. Aldous' conjecture
says that when $\Sigma$ is a set of transpositions, the standard
irrep is always one of these irreps.} This conjecture was proved in 2009 by Caputo, Liggett and Richthammer.
In fact, they proved a stronger version applying to weighted Cayley
graphs as well:
\begin{thm}[Aldous' spectral gap conjecture, \cite{caputo2010proof}]
\label{thm:Aldous}Let $\Sigma\in\mathbb{R}\left[S_{n}\right]$ be
supported on transpositions, with non-negative coefficients. Then
the second eigenvalue of the weighted Cayley graph $\cay\left(S_{n},\Sigma\right)$
is equal to the largest eigenvalue of the standard representation.
Namely,
\[
\lambda_{2}\left(\cay\left(S_{n},\Sigma\right)\right)=\lambda_{1}\left(\mathrm{std}\left(\Sigma\right)\right).
\]
\end{thm}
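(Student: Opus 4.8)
The plan is to recast Theorem~\ref{thm:Aldous} as a comparison between the generators of two Markov chains, dispose of the easy inequality by a representation-theoretic remark, and reduce the remaining inequality to a single positivity statement in the group algebra $\mathbb{R}\left[S_{n}\right]$ --- the \emph{octopus inequality} --- from which the theorem follows by induction on $n$. Here is the setup: write $\Sigma=\sum_{e}w_{e}\,\tau_{e}$, with $e$ ranging over the transpositions that occur --- the edges of a weighted graph $G$ on $\left\{ 1,\dots,n\right\} $ with weights $w_{e}\geq0$ --- and set $W=\sum_{e}w_{e}=\lambda_{1}\left(\triv\left(\Sigma\right)\right)$. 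The element
\[
\mathcal{L}_{G}\;=\;W\cdot I-\Sigma\;=\;\sum_{e}w_{e}\left(I-\tau_{e}\right)\;\in\;\mathbb{R}\left[S_{n}\right]
\]
is the generator of the interchange process; it is positive semidefinite in the regular representation, and for connected $G$ its kernel there is exactly the constant functions. Since $\sgn\left(\mathcal{L}_{G}\right)=2W$, the sign representation never realizes the smallest eigenvalue of $\mathcal{L}_{G}$ on the orthogonal complement of the constants; so, writing $\mathrm{gap}\left(\mathcal{L}_{G}\right)$ for that smallest eigenvalue and $g\left(G\right)$ for the spectral gap (algebraic connectivity) of the single-particle random walk on $G$ --- the least nonzero eigenvalue of the weighted graph Laplacian $L_{G}$ on $\mathbb{R}^{n}$ --- Theorem~\ref{thm:Aldous} is equivalent to the \emph{gap identity} $\mathrm{gap}\left(\mathcal{L}_{G}\right)=g\left(G\right)$ for every connected weighted graph $G$ on $n$ vertices (the disconnected case reduces to this component by component).

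The inequality $\mathrm{gap}\left(\mathcal{L}_{G}\right)\leq g\left(G\right)$ is the easy half: $\Sigma$ acts on the permutation module $\mathbb{R}^{n}$ by permutation matrices, so $\mathcal{L}_{G}=W\cdot I-\Sigma$ acts there precisely as the graph Laplacian $L_{G}$; and since $\mathbb{R}^{n}=\triv\oplus\std$ while $\std$ is a constituent of $\mathbb{R}\left[S_{n}\right]$, the spectrum of $L_{G}$ --- in particular $g\left(G\right)$ --- is contained in the spectrum of $\mathcal{L}_{G}$. It remains to prove the operator inequality $\mathcal{L}_{G}\succeq g\left(G\right)\left(I-\Pi_{\mathbf{1}}\right)$, where $\Pi_{\mathbf{1}}$ is the projection onto the constants; equivalently, that no irreducible constituent of $\mathbb{R}\left[S_{n}\right]$ other than $\std$ contributes an eigenvalue of $\mathcal{L}_{G}$ below $g\left(G\right)$.

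The crux is the octopus inequality: for a star with centre $0$, legs $1,\dots,N$ and weights $w_{i}>0$ (put $W=\sum_{i}w_{i}$), one has in $\mathbb{R}\left[S_{\left\{ 0,1,\dots,N\right\} }\right]$
\[
\sum_{i=1}^{N}w_{i}\left(I-\tau_{0i}\right)\;\succeq\;\frac{1}{W}\sum_{1\leq i<j\leq N}w_{i}w_{j}\left(I-\tau_{ij}\right),
\]
where $A\succeq B$ means that $A-B$ acts as a positive semidefinite operator in every representation. The right-hand side is the generator of the complete graph on the legs with edge-weight $w_{i}w_{j}/W$ on $\left\{ i,j\right\} $ --- the Schur complement, or electrical-network node elimination, of the star at its centre --- so the inequality says that in any weighted graph one may replace the star at a vertex $v$ by the complete graph on $N\left(v\right)$ with edge-weights $w_{uv}w_{u'v}/W_{v}$ (where $W_{v}=\sum_{u\sim v}w_{uv}$) and only decrease $\mathcal{L}_{G}$ in the positive semidefinite order. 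I would prove it following Caputo--Liggett--Richthammer: expand the associated quadratic form on $\mathbb{R}\left[S_{N+1}\right]$, regroup the cross terms using the $3$-cycles $\tau_{0i}\tau_{0j}$, and verify the resulting elementary inequality by induction on the number $N$ of legs.

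Granting the octopus inequality, I would finish by induction on $n$. The base cases $n\leq3$ are immediate, since then $\std$ is the only irreducible representation of $S_{n}$ other than $\triv$ and $\sgn$. For the inductive step, one applies the octopus inequality at a carefully chosen vertex $v$ to bound $\mathcal{L}_{G}$ from below by the interchange generator of the $\left(n-1\right)$-vertex graph $G^{\left(v\right)}$ obtained by eliminating $v$ (to which the inductive hypothesis applies), and combines this with a direct estimate on the complementary subspace --- on which $\mathcal{L}_{G^{\left(v\right)}}$ degenerates --- by means of the elementary Laplacian analogue of the octopus inequality,
\[
\big\langle f,L_{\star\left(v\right)}f\big\rangle-\frac{1}{W_{v}}\big\langle f,L_{K\left(N\left(v\right)\right)}f\big\rangle\;=\;\frac{1}{W_{v}}\Big(\textstyle\sum_{u\sim v}w_{uv}\big(f\left(v\right)-f\left(u\right)\big)\Big)^{2}\;\geq\;0
\]
(here $K\left(N\left(v\right)\right)$ carries the edge-weights $w_{uv}w_{u'v}$), together with the Fiedler-type bound $g\left(G\right)\leq\frac{n}{n-1}\min_{v}W_{v}$; I would import this bookkeeping from \cite{caputo2010proof}. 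The main obstacle is squarely the octopus inequality, a genuinely non-obvious positivity statement; a secondary difficulty is arranging the induction so that the lower bound it yields on $\mathrm{gap}\left(\mathcal{L}_{G}\right)$ is exactly $g\left(G\right)$ rather than something smaller.
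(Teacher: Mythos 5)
The paper does not prove Theorem \ref{thm:Aldous} at all: it is imported as a black box from Caputo--Liggett--Richthammer \cite{caputo2010proof}, so there is no internal proof to compare yours against. Judged on its own terms, your outline correctly reproduces the architecture of the CLR argument: the reformulation as the identity between the spectral gap of the interchange-process generator $W\cdot I-\Sigma$ and the algebraic connectivity $g\left(G\right)$ of the underlying weighted graph (with the correct observations that $\sgn$ and the disconnected case are harmless), the easy inequality via $\mathbb{R}^{n}\cong\triv\oplus\std$ sitting inside the regular representation, the reduction of the hard inequality to the octopus inequality, and an induction on $n$ using star--mesh (Schur complement) elimination of a vertex. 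The statement of the octopus inequality and the reduced edge-weights $w_{uv}w_{u'v}/W_{v}$ are stated correctly.

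As a proof, however, there is a genuine gap: everything difficult is deferred. The octopus inequality is precisely the crux of \cite{caputo2010proof}, and your one-sentence plan for it (expand the quadratic form, regroup cross terms via the $3$-cycles $\tau_{0i}\tau_{0j}$, and finish by induction on the number of legs) is not an argument and does not resemble how it is actually established; the known proofs (CLR's original one, and the later simplifications by Cesi and by Alon--Kozma) all require a nontrivial positive-semidefinite decomposition or character-theoretic input, not a routine induction on legs, so this step cannot be waved through. Likewise, the derivation of the exact gap identity from the octopus inequality --- choosing the vertex $v$, handling the subspace on which the comparison with $G^{\left(v\right)}$ degenerates, and matching constants so that the induction returns exactly $g\left(G\right)$ rather than a weaker bound --- is imported wholesale from \cite{caputo2010proof}. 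In short, you have written an accurate roadmap of the known proof, which is appropriate for a theorem the paper itself only cites, but it is not a self-contained proof: the two key ingredients are asserted, not proved.
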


This theorem can be equivalently stated in terms of Schreier graphs.
Given $\Sigma\subset S_{n}$, denote $\left[n\right]\stackrel{\mathrm{def}}{=}\left\{ 1,\ldots,n\right\} $
and by $\schone$ the Schreier graph depicting the action of $S_{n}$
on $\left[n\right]$ with respect to the subset $\Sigma$. This is
the $\left|\Sigma\right|$-regular graph with $n$ vertices labeled
$1,\ldots,n$, and with an edge $\left(i,j\right)$ for every $\sigma\in\Sigma$
with $\sigma\left(i\right)=j$ (there may be loops and multiple edges).
The $n$ eigenvalues of (the adjacency matrix of) $\schone$ are a
sub-multiset of the $n!$ eigenvalues of $\mathrm{Cay}\left(S_{n},\Sigma\right)$:
this can be seen directly by lifting every eigenfunction on the Schreier
graph to an eigenfunction with the same eigenvalue in the Cayley graph.
Both the Schreier graph and the Cayley graph have trivial eigenvalue
$\lambda_{1}=\left|\Sigma\right|$. It follows that if $\Sigma$ is
symmetric then
\begin{equation}
\lambda_{2}\left(\schone\right)\le\lambda_{2}\left(\cay\left(S_{n},\Sigma\right)\right).\label{eq:trivial inequality}
\end{equation}
Aldous' conjecture states that whenever $\Sigma$ is a set of transpositions,
there is equality in \eqref{eq:trivial inequality}. This is equivalent
to (the non-weighted version of) Theorem \ref{thm:Aldous}, since
the representation $\mathbb{C}^{\left[n\right]}$ decomposes into
the constant functions in $\mathbb{C}^{\left[n\right]}$, which form
the trivial representation, and $\mathrm{std}$.

Aldous' conjecture cannot be naïvely extended to arbitrary symmetric
sets $\Sigma\subset S_{n}$, as illustrated by the following three
examples:
\begin{example}
\label{exa:counter example to RW1}
\begin{enumerate}
\item \label{enu:transitive subgroups}If $\Sigma$ generates a proper transitive
subgroup $H\le S_{n}$, $H\ne A_{n}$, such as $\left\langle \left(1\,2\,\ldots\,n\right)\right\rangle $,
then $\cay\left(S_{n},\Sigma\right)$ has at least three connected
components and so $\lambda\left(S_{n},\Sigma\right)=\left|\Sigma\right|$,
whereas the Schreier graph $\schone$ is connected and so $\lambda_{1}\left(\mathrm{std}\left(\Sigma\right)\right)=\lambda_{2}\left(\schone\right)\lneq\left|\Sigma\right|$.
\item \label{enu:transposition and a cycle}For $\Sigma=\{\mathrm{id},\left(1~2\right),\left(1~2~\ldots~n\right)^{\pm1}\}$,
it follows from \cite{diaconis1993comparison} that $\lambda_{1}(\std\left(\Sigma\right))<\max(\lambda_{1}(\rho_{\left(n-2,2\right)}(\Sigma)),\lambda_{1}(\rho_{\left(n-2,1,1\right)}(\Sigma)))$,
where $\rho_{\lambda}$ is the irrep with Young diagram $\lambda$;
we elaborate in Example \ref{exa:n-cycle and (12)}.
\item \label{enu:Full-conjugacy-class}Full conjugacy classes also occasionally
have $\lambda\left(S_{n},\Sigma\right)$ not associated with $\mathrm{std}$.
For example, denoting by $\sigma^{S_{n}}$ the conjugacy class of
$\sigma$, if $n$ is odd and $\sigma=\left(1~2\right)\left(3~4\right)\ldots\left(n-2~n-1\right)$,
then $\lambda_{1}(\mathrm{std}(\sigma^{S_{n}}))=0$ while $\lambda(S_{n},\sigma^{S_{n}})=\left|\sigma^{S_{n}}\right|/n$.
We explain this computation below.
\end{enumerate}
\end{example}

But what if instead of considering the Schreier graph depicting the
action of $S_{n}$ on $\left[n\right]$, we consider the Schreier
graph depicting the action on $2$-tuples, $4$-tuples or $k$-tuples
of distinct elements from $\left[n\right]$, for some fixed $k$?
We denote this Schreier graph, which is a $\left|\Sigma\right|$-regular
graph on $n\left(n-1\right)\cdots\left(n-k+1\right)$ vertices, by
$\schk k$.

A more sophisticated variation of Aldous' conjecture is then suggested
by the classification of multiply-transitive finite groups. This classification,
which follows from the classification of finite simple groups, gives
a full description of (faithful) $k$-transitive actions of finite
groups, for every $k\ge2$. In particular, a finite $4$-transitive
group is either $S_{n}$ ($n\ge4$), $A_{n}$ $\left(n\ge6\right)$,
or one of the four Mathieu groups $M_{11}$, $M_{12}$, $M_{23}$
and $M_{24}$ (where $M_{n}$ is a subgroup of $S_{n}$ and is $4$-transitive
in its action on $\left[n\right]$, for $n=11,12,23,24$) \cite[Theorem 4.11]{cameron1999permutation}.
It follows that for $n\ge25$, if $\Sigma\subset S_{n}$ does not
generate $A_{n}$ or $S_{n}$, then $\left\langle \Sigma\right\rangle $
is not $4$-transitive, and so the Schreier graph associated with
the action of $S_{n}$ on $4$-tuples of distinct elements in $\left[n\right]$
is not connected. This shows that the mere existence of a (positive)
spectral gap is captured by $\schk 4$, and this resolves the issue
illustrated in Example \ref{exa:counter example to RW1}\eqref{enu:transitive subgroups}.
Is it possible that $\schk 4$ can capture not only the existence
of a spectral gap but also its exact value?

As elaborated in Example \ref{exa:n-cycle and (12)} below, replacing
$\schone$ with $\schk 2$ resolves the issue with Example \ref{exa:counter example to RW1}\eqref{enu:transposition and a cycle}
as well. As for a full conjugacy class, it turns out that occasionally
one needs to consider also the ``signed'' action of $S_{n}$ on
$k$-tuples. We define it more generally, for weighted subsets, keeping
the generality of Theorem \ref{thm:Aldous}:
\begin{defn}
\label{def:lambda-2 of RW-k}For an action of $S_{n}$ on a set $X$
and an element $\Sigma=\sum_{\sigma\in S_{n}}\alpha_{\sigma}\sigma\in\mathbb{R}\left[S_{n}\right]$,
let $\rho_{X}\left(\sigma\right)$ be the permutation matrix depicting
the action of $\sigma$ on $X$, and denote
\[
\rho_{X}\left(\Sigma\right)=\sum\nolimits _{\sigma\in S_{n}}\alpha_{\sigma}\rho_{X}\left(\sigma\right),\qquad\rho_{X}^{\sgn}\left(\Sigma\right)=\left(\rho_{X}\otimes\sgn\right)\left(\Sigma\right)=\sum\nolimits _{\sigma\in S_{n}}\sgn\left(\sigma\right)\alpha_{\sigma}\rho_{X}\left(\sigma\right).
\]
We denote by $\lambda\left(k,\Sigma\right)$ \marginpar{$\lambda\left(k,\Sigma\right)$ $\lambda^{\protect\sgn}\left(k,\Sigma\right)$}(resp.\ $\lambda^{\sgn}\left(k,\Sigma\right)$)
the largest eigenvalue of $\rho_{\left[n\right]_{k}}\left(\Sigma\right)$
(resp.\ $\rho_{\left[n\right]_{k}}^{\sgn}\left(\Sigma\right)$) corresponding
to an eigenvector orthogonal to the constant functions.
\end{defn}

We call $\Sigma$ \emph{symmetric} if $\alpha_{\sigma}=\alpha_{\sigma^{-1}}$
for every $\sigma\in S_{n}$ and \emph{non-negative} if $\alpha_{\sigma}\ge0$
for every $\sigma\in S_{n}$. It follows from the decomposition of
the regular representation that for every symmetric non-negative $\Sigma$
and $n>k$
\[
\max\left(\lambda\left(k,\Sigma\right),\lambda^{\sgn}\left(k,\Sigma\right)\right)\le\lambda\left(S_{n},\Sigma\right),
\]
similarly to \eqref{eq:trivial inequality}. The discussion above
leads to the following potential generalization of Theorem \ref{thm:Aldous},
which was raised during discussions between Gady Kozma and the second
author.

\begin{ques}\label{question:false-conj-sch}Is there a fixed $k\geq1$
such that for every large enough $n$ and every symmetric non-negative
$\Sigma\in\mathbb{R}\left[S_{n}\right]$, we have
\[
\max\left(\lambda\left(k,\Sigma\right),\lambda^{\sgn}\left(k,\Sigma\right)\right)=\lambda\left(S_{n},\Sigma\right)?
\]

\end{ques}

This question has an equivalent representation-theoretic formulation.
When $k$ is fixed, the decomposition of the (signed) action of $S_{n}$
on $k$-tuples into irreps, contains a fixed number of natural families
of irreps. For example, for $n\ge8$, the action on $4$-tuples decomposes
precisely to the $12$ irreps associated with Young diagrams with
at most four blocks outside the first row. These families are formally
defined as follows:
\begin{defn}
\label{def:family of irreps}A series of irreps $\{\rho_{n}\in\hat{S_{n}}\}_{n\ge n_{0}}$
is called a \emph{family of irreps} if one of the following conditions
holds:
\begin{enumerate}
\item \label{enu:row}Either the structure of the associated Young diagram
outside the first row is constant, namely, for every $n\ge n_{0}$,
$\rho_{n+1}$ is obtained from $\rho_{n}$ by adding a block to the
first row, or
\item \label{enu:column}The structure of the associated Young diagram outside
the first column is constant, namely, for every $n\ge n_{0}$, $\rho_{n+1}$
is obtained from $\rho_{n}$ by adding a block to the first column.
\end{enumerate}
\end{defn}

The representation $\mathbb{C}^{[n]_{k}}$ of $S_{n}$ decomposes
into all irreps with at most $k$ blocks outside the first row (with
some multiplicities). Similarly, the \emph{signed} action on $k$-tuples,
$\sgn\otimes\mathbb{C}^{[n]_{k}}$, decomposes into all irreps with
at most $k$ blocks outside the first column. Thus, the following
question is equivalent to Question \ref{question:false-conj-sch}.

\begin{ques}\label{question:false-conj}Is there a finite set of
families of irreps $\rho^{\left(1\right)},\ldots,\rho^{\left(m\right)}$
as in Definition \ref{def:family of irreps} such that for every large
enough $n$ and every symmetric non-negative $\Sigma\in\mathbb{R}\left[S_{n}\right]$,
we have that 
\[
\lambda\left(S_{n},\Sigma\right)=\max_{i=1}^{m}\lambda_{1}\left(\rho_{n}^{\left(i\right)}\left(\Sigma\right)\right)?
\]

\end{ques}

This work studies a special case of this question, where $\Sigma$
is \emph{normal}, in the sense that the coefficients $\alpha_{\sigma}$
are constant on every conjugacy class\footnote{Note that a normal element $\Sigma\in S_{n}$ is symmetric, as $\sigma^{-1}$
is conjugate to $\sigma$ for every $\sigma\in S_{n}$.}. In this case, the eigenvalues of the different irreps can be computed
directly from character values (see Lemma \ref{lem:evalues of normal sets}
below). Our first result gives a positive answer to Question \ref{question:false-conj}
(and to Question \ref{question:false-conj-sch}) when $\Sigma$ is
a single conjugacy class. For every $n\ge8$, consider the following
set of eight irreps:
\[
\T_{n}=\left\{ \begin{array}{cccccc}
\left(n-1,1\right)^{\phantom{t}} & \left(n-2,2\right)^{\phantom{t}} &  & \left(n-3,3\right) & \left(n-3,2,1\right) & \left(n-4,4\right)\\
\left(n-1,1\right)^{t} & \left(n-2,2\right)^{t} & \left(n-2,1,1\right)^{t}
\end{array}\right\} \subset\widehat{S_{n}};
\]
Here $\rho^{t}$ denotes the representation $\sgn\otimes\rho$, whose
Young diagram is given by transposing that of $\rho$, e.g.\ $\left(n-1,1\right)^{t}=\left(2,1,1,\ldots,1\right)$.
\begin{thm}
\label{thm:one conj class}There exists $N_{0}\in\mathbb{N}$ such
that for every $n\ge N_{0}$, if $\Sigma\subset S_{n}$ is a full,
single conjugacy class, then $\lambda\left(S_{n},\Sigma\right)$ is
attained by one of the eight irreps in $\T_{n}$:
\[
\lambda\left(S_{n},\Sigma\right)=\max_{\rho\in\T_{n}}\lambda_{1}\left(\rho\left(\Sigma\right)\right).
\]
\end{thm}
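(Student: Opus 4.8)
The plan is to translate the statement into a comparison of normalized characters, and then to handle that comparison by splitting on the number of fixed points of a representative of $\Sigma$.

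\emph{Reduction.} Write $\Sigma$ for the conjugacy class of $\sigma\in S_{n}$. Then $\sum_{g\in\Sigma}g$ is central in $\mathbb{R}\left[S_{n}\right]$, so by Schur's lemma $\rho\left(\Sigma\right)=\frac{\left|\Sigma\right|\chi_{\rho}\left(\sigma\right)}{\dim\rho}\cdot I$ for every $\rho\in\widehat{S_{n}}$ --- this is the content of Lemma~\ref{lem:evalues of normal sets} --- and hence $\lambda_{1}\left(\rho\left(\Sigma\right)\right)=\left|\Sigma\right|\cdot\widetilde{\chi}_{\rho}\left(\sigma\right)$, where $\widetilde{\chi}_{\rho}:=\chi_{\rho}/\dim\rho$. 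Thus the theorem amounts to the claim that for all large $n$ and every $\sigma\in S_{n}$, the maximum of $\widetilde{\chi}_{\rho}\left(\sigma\right)$ over $\rho\in\widehat{S_{n}}\setminus\left\{ \triv,\sgn\right\}$ is attained at some $\rho\in\T_{n}$. Along the way I would record the identity $\widetilde{\chi}_{\rho\otimes\sgn}\left(\sigma\right)=\sgn\left(\sigma\right)\widetilde{\chi}_{\rho}\left(\sigma\right)$: for odd $\sigma$ it interchanges the roles of $\rho$ and its transpose, and the list $\T_{n}$ is chosen precisely so that every irrep that can compete for the maximum either lies in $\T_{n}$, or is the transpose of one, or is a ``flatter neighbor'' of one.

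\emph{Many fixed points.} Suppose first that $\sigma$ has at least $\left(1-\varepsilon\right)n$ fixed points, for a small constant $\varepsilon>0$ fixed in advance. Then $\widetilde{\chi}_{\std}\left(\sigma\right)=\frac{f_{1}\left(\sigma\right)-1}{n-1}\ge1-\varepsilon-o\left(1\right)$, and I would show by a direct computation --- expanding $\widetilde{\chi}_{\lambda}\left(\sigma\right)$ with the Murnaghan--Nakayama rule, peeling the few non-fixed cycles off the end of the first row (resp.\ column) of $\lambda$ --- that every other irrep $\lambda$ satisfies $\widetilde{\chi}_{\lambda}\left(\sigma\right)<\widetilde{\chi}_{\std}\left(\sigma\right)$: for a ``row-type'' family $\widetilde{\chi}_{\lambda}\left(\sigma\right)\approx\left(\lambda_{1}/n\right)^{\left|\supp\left(\sigma\right)\right|}$ to leading order and is maximized by $\lambda=\std$; an irrep with $\lambda_{1}/n$ bounded away from $1$ gives a value bounded away from $1$; and the sign-twisted (``column-type'') partners give values that are either negative or no larger. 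So in this regime the maximum is attained by $\std$ or $\std^{t}$, both in $\T_{n}$.

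\emph{Few fixed points.} In the complementary case $f_{1}\left(\sigma\right)<\left(1-\varepsilon\right)n$, the character estimates of Larsen and Shalev \cite{larsen2008characters} apply with a uniform saving: there is $\gamma=\gamma\left(\varepsilon\right)>0$ such that $\left|\widetilde{\chi}_{\rho}\left(\sigma\right)\right|\le\dim\left(\rho\right)^{-\gamma}$ as soon as $\dim\rho$ is large (as a function of $n$), say $\dim\rho>n^{K}$; and since all but a bounded number of irreps of $S_{n}$ have dimension above $n^{K}$, the maximizer must lie in a bounded list consisting of $\T_{n}$ together with finitely many ``near misses'' --- $\left(n-2,1,1\right)$, $\left(n-3,1,1,1\right)$, the transposes $\left(n-3,3\right)^{t},\left(n-3,2,1\right)^{t},\left(n-4,4\right)^{t}$, and shapes such as $\left(n-4,3,1\right)$ or $\left(n-5,5\right)$. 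For every irrep on this list one has closed-form character values --- from the decompositions of the permutation modules on $k$-subsets for $k\le4$, and from the Frobenius formula applied to the small induced characters $\mathrm{Ind}_{S_{n-j}\times S_{j}}^{S_{n}}\!\left(\triv\boxtimes\sgn\right)$ --- so what remains is a head-to-head comparison inside the list.

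\emph{The main obstacle.} I expect the genuine difficulty to be precisely this last comparison: one must verify, for every cycle type, that each near miss is dominated by some member of $\T_{n}$ (for instance $\left(n-2,1,1\right)$ by $\left(n-2,2\right)$, with the transposes swapped when $\sigma$ is odd; $\left(n-3,1,1,1\right)$ and $\left(n-3,2,1\right)^{t}$ by $\left(n-3,3\right)$; $\left(n-5,5\right)$ by $\left(n-2,1,1\right)^{t}$ or $\left(n-2,2\right)$). The competitors typically differ only in lower-order terms --- on the class of $n/3$ disjoint $3$-cycles, for example, $\widetilde{\chi}_{\left(n-3,3\right)}=\frac{2}{\left(n-1\right)\left(n-5\right)}$ while $\widetilde{\chi}_{\left(n-2,1,1\right)}=\frac{2}{\left(n-1\right)\left(n-2\right)}$ --- so the step needs sharp, not merely order-of-magnitude, estimates, and it is here that the precise choice of these eight diagrams is forced. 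One must also fix the thresholds $\varepsilon$ and $K$ (and establish a lower bound of the form $\max_{\rho\in\T_{n}}\widetilde{\chi}_{\rho}\left(\sigma\right)\ge n^{-C}$ to play against the Larsen--Shalev upper bound) so that the two regimes are handled uniformly in $\sigma$ and jointly exhaust all conjugacy classes; this bookkeeping is what produces the constant $N_{0}$.
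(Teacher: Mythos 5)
Your overall frame (reduce to normalized characters, split on fixed points, use Larsen--Shalev where the character bound is strong, finish with head-to-head comparisons among low shapes) is in the spirit of the paper, but as written there are two genuine gaps, and they sit exactly where the paper's key ideas are. First, in your ``few fixed points'' regime the claimed uniform saving is false: Theorem \ref{thm:larsen-shalev} gives the exponent $\frac{\log\left(n/f\right)}{2\log n}-\varepsilon_{n}$, which is of order $1/\log n$ (not a constant $\gamma\left(\varepsilon\right)>0$) when $f=c_{1}\left(\sigma\right)$ is a positive proportion of $n$. The usable consequence is $\x_{\rho}\left(\sigma\right)\le\left(f/n\right)^{K/2-o\left(1\right)}$ once $\dim\rho\ge n^{K}$, and to beat the benchmark $\x_{\std}\left(\sigma\right)\approx f/n$ (or the $n^{-3}$ lower bound of Lemma \ref{lem:ten at least 2/n^3} when $c_{1}\le1$) you must take $K$ large, e.g.\ $K>6$, and larger still if your $\varepsilon$ is small. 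Consequently the residual ``bounded list'' is not your handful of near misses but \emph{all} diagrams with up to roughly $K$ blocks outside the first row or column (cf.\ Lemma \ref{lem:dim > n^6}), and in your regime these must be compared to $\std$ for every $\sigma$ with $2\le c_{1}\left(\sigma\right)<\left(1-\varepsilon\right)n$. The paper never faces this: its Larsen--Shalev step is confined to $c_{1}\le1$, where the crude monomial/degree count of Lemmas \ref{lem:monomials in the polynomial for characters} and \ref{lem:ten beats any family with k>4} (which relies on $c_{1}\le1$) dispatches all families with $5\le k\le13$ blocks; that argument is not available when $c_{1}$ can be of order $n$.

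Second, your ``many fixed points'' regime has no actual mechanism. The statement $\x_{\lambda}\left(\sigma\right)\approx\left(\lambda^{\left(1\right)}/n\right)^{\left|\supp\left(\sigma\right)\right|}$ is a heuristic: it is not established uniformly over all shapes (including those with, say, $\sqrt{n}$ blocks outside the first row), and at leading order it does not separate $\std$ from its close neighbors $\left(n-2,2\right)$, $\left(n-2,1,1\right)$, etc., which is where strict domination must be proved; Larsen--Shalev is useless here since $f\asymp n$. The paper's proof hinges on an idea absent from your proposal: by the branching rule, the normalized character of a permutation fixing a point is a weighted average of normalized characters one level down (Lemma \ref{lem:weighted avg}), whence if $\std$ rules for $\sigma$ in $S_{n-1}$ it rules in $S_{n}$ (Corollary \ref{cor:std rules in n-1 =00003D=00003D> rules in n}). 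This reduces every $\sigma$ with $c_{1}\ge2$ either to the exactly-two-fixed-point case (where $f=2$ and Larsen--Shalev is sharp, Lemma \ref{lem:2 fixed points}) or to bounded support, handled by letting $n$ grow (Lemma \ref{lem:std rules eventually for every sigma}). Unless you import this averaging/induction device (or a quantitative substitute such as a Roichman-type bound, proved uniformly), both your regimes remain unproved; the ``main obstacle'' you identify (the head-to-head comparisons among the eight shapes and their neighbors, as in Tables \ref{tab:ruling irrep for even perms} and \ref{tab:ruling irrep for odd perms}) is real but is only the final, and not the hardest, step.
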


In the language of Definition \ref{def:lambda-2 of RW-k} and Question
\ref{question:false-conj-sch}, this implies that for $n$ large enough,
if $\Sigma\subset S_{n}$ is a conjugacy class, then $\lambda\left(S_{n},\Sigma\right)=\max\left(\lambda\left(4,\Sigma\right),\lambda^{\sgn}\left(2,\Sigma\right)\right)$.
The proof of Theorem \ref{thm:one conj class} relies heavily on asymptotically
sharp character bounds due to Larsen and Shalev \cite{larsen2008characters}
-- see Section \ref{sec:A-Single-Conjugacy}. The statement of Theorem
\ref{thm:one conj class} does not hold for $n=16$: for $\Sigma=\left[\left(1~2~3~4~5\right)\left(6~7~8~9~10\right)\left(11~12~13~14~15\right)\right]^{S_{16}}$,
the largest non-trivial eigenvalue $\lambda\left(S_{16,}\Sigma\right)$
is associated with the irreps $\left(11,5\right)$ and its transpose.
However, simulations suggest that this is the largest counter-example:
\begin{conjecture}
Theorem \ref{thm:one conj class} holds with $N_{0}=17$.
\end{conjecture}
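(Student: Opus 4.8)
The plan is to retrace the proof of Theorem \ref{thm:one conj class}, making every asymptotic estimate in it quantitatively explicit, and then to settle the remaining finite range of $n$ by direct computation. By Lemma \ref{lem:evalues of normal sets}, when $\Sigma=\left[\sigma\right]$ is a single conjugacy class each matrix $\rho\left(\Sigma\right)$ equals the scalar $\frac{\left|\Sigma\right|\chi_{\rho}\left(\sigma\right)}{\dim\rho}\cdot I$, so that
\[
\lambda\left(S_{n},\Sigma\right)=\left|\Sigma\right|\cdot\max_{\rho\in\widehat{S_{n}}\setminus\left\{ \triv,\sgn\right\} }\frac{\chi_{\rho}\left(\sigma\right)}{\dim\rho}\,,
\]
and the conjecture becomes the character-theoretic statement that for every $n\ge17$ and every $\sigma\in S_{n}$ the normalized character $\chi_{\rho}\left(\sigma\right)/\dim\rho$ is maximized over $\widehat{S_{n}}\setminus\left\{ \triv,\sgn\right\} $ by a member of $\T_{n}$. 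Fix a generous absolute constant $B$ and call $\rho$ \emph{low} if its Young diagram, or its transpose, has at most $B$ boxes outside the first row --- equivalently, $\rho$ lies in one of finitely many families in the sense of Definition \ref{def:family of irreps} --- and \emph{high} otherwise.

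For the high irreps the tool is an \emph{effective} form of the Larsen--Shalev character estimates of \cite{larsen2008characters} (supplemented where needed by the classical character bounds of Roichman and of Fomin--Lulov): when $\sigma$ is far from the identity these give, for every high $\rho$, a bound of the shape $\left|\chi_{\rho}\left(\sigma\right)\right|\le\dim\left(\rho\right)^{1-\delta}$ with an explicit $\delta>0$, so that $\chi_{\rho}\left(\sigma\right)/\dim\rho\le\dim\left(\rho\right)^{-\delta}$ is small, whence it is beaten by, say, the $\left(n-1,1\right)$-ratio $\frac{\mathrm{fix}\left(\sigma\right)-1}{n-1}$ or, for $\sigma$ near an $n$-cycle, by one of the hook members of $\T_{n}$. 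Permutations $\sigma$ of small support need a separate, more elementary argument: such a $\sigma$ sits inside a copy of $S_{\mathrm{supp}\left(\sigma\right)}$, the normalized characters of the row- and column-families are then governed by a stable, polynomial-in-$n$ description, and one checks that the best member of $\T_{n}$ already outscores every high $\rho$. The upshot of this step is an explicit $N_{1}$ so that no high irrep attains $\lambda\left(S_{n},\Sigma\right)$ for $n\ge N_{1}$.

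It then remains to compare the low irreps among themselves. Within each family $\dim\rho_{n}$ is a polynomial in $n$ and $\chi_{\rho_{n}}\left(\sigma\right)$ is given by a fixed character polynomial in the cycle type of $\sigma$, so the normalized characters stabilize, and the task reduces to a finite list of pairwise comparisons: one must show, for instance, that $\left(n-2,2\right)$ dominates $\left(n-3,3\right)^{t}$ and $\left(n-3,2,1\right)^{t}$ on every conjugacy class, that $\left(n-5,5\right)$ and $\left(n-4,3,1\right)$ never beat all of $\T_{n}$, and so on, until exactly the eight members of $\T_{n}$ remain. These comparisons are finite but \emph{tight}: the counterexample at $n=16$, where $\left(11,5\right)=\left(n-5,5\right)$ overtakes $\T_{16}$ at the class of three disjoint $5$-cycles, shows that there is essentially no slack near the diagrams $\left(n-3,3\right),\left(n-4,4\right),\left(n-5,5\right)$, so one must argue exactly --- via monotonicity in the cycle type and the explicit character polynomials --- rather than with crude bounds. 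Finally, the range $17\le n<N_{1}$ is closed by computing $\chi_{\rho}\left(\sigma\right)/\dim\rho$ over all $p(n)$ irreps and all $p(n)$ conjugacy classes of $S_{n}$ for each such $n$, and checking that the maximizer lies in $\T_{n}$.

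\textbf{The main obstacle} is quantitative. The Larsen--Shalev bounds as published carry $n^{o(1)}$ factors, and turning them into honest inequalities valid already from an $N_{1}$ \emph{small enough} that $p(N_{1})$ can actually be enumerated --- so $N_{1}$ in the low hundreds at most --- is the crux, since a weak effective bound leaves an infeasible computation. A secondary obstacle is the tightness of the low-irrep comparisons above, which precludes any lossy shortcut, and a minor one is the bookkeeping required to handle the small-support permutations uniformly in $n$, where one must fall back on the stability of symmetric-group characters rather than on a single character bound.
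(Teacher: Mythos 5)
You should note first that the statement you are addressing is presented in the paper as a \emph{conjecture}: the paper offers no proof of it, only the numerical evidence that $n=16$ (three disjoint $5$-cycles) is the largest counterexample found in simulations. Your text is a research plan for how one might prove it, and as a plan it is reasonable --- it is essentially the paper's own proof of Theorem \ref{thm:one conj class} (split into irreps with few versus many boxes outside the first row/column, Larsen--Shalev for the latter, explicit character polynomials for the former) with the added demand of effectivity plus a terminal computer check. But none of the decisive steps is actually carried out, so there is no proof here; what you have written is, in effect, a restatement of why the claim is only a conjecture. Concretely: (i) the Larsen--Shalev bound of Theorem \ref{thm:larsen-shalev} carries the non-explicit term $\varepsilon_{n}$, and you do not supply an effective substitute; the classical bounds you invoke (Roichman, Fomin--Lulov) are weakest precisely in the regime that matters here, namely $\sigma$ with $c_{1}\left(\sigma\right)\le1$ and few short cycles, where by Lemma \ref{lem:ten at least 2/n^3} the target threshold is of order $n^{-3}$ and one must beat it uniformly over all high irreps, including those of dimension only polynomially large in $n$ (this is exactly why the paper needs Lemma \ref{lem:dim > n^6} together with the ineffective $\varepsilon_{n}$). (ii) The treatment of small-support permutations in the paper (Lemma \ref{lem:std rules eventually for every sigma} and Corollary \ref{cor:std rules for all sigma with small support}) is intrinsically non-effective: the threshold $n_{0}$ depends on the class through the quantity $B=\x_{\rho}\left(\sigma\right)<1$, and the number of classes to be handled grows with the unknown constant $N_{3}$; your appeal to ``stability of characters'' does not by itself convert this into an explicit uniform bound. (iii) The pairwise comparisons among the low families are asserted, not performed, and you yourself observe they are tight near $\left(n-4,4\right)$ and $\left(n-5,5\right)$ --- which is exactly where the $n=16$ failure lives, so no slack is available at $n=17$. (iv) The final ``check $17\le n<N_{1}$ by enumeration'' is only meaningful once an explicit and small $N_{1}$ exists, which is the very thing not provided.

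In short, the proposal correctly identifies the obstacles (you even label the effective-bounds issue as ``the crux'') but does not overcome any of them, so it cannot be accepted as a proof of the conjecture; it should be read as a plausible programme whose feasibility hinges on character bounds that are currently not available in effective form with constants small enough to make the residual computation tractable.
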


When $\Sigma\in\mathbb{R}\left[S_{n}\right]$ is a general non-negative
normal element, not only are the eight irreps from Theorem \ref{thm:one conj class}
insufficient, but no finite set of families of irreps as in Definition
\ref{def:family of irreps} suffices to capture $\lambda\left(S_{n},\Sigma\right)$,
and so the answer to Questions \ref{question:false-conj-sch} and
\ref{question:false-conj} turns out to be negative:
\begin{thm}
\label{thm:normal-sets-negative-result}For every $k\geq1$ and every
large enough $n$, there is a non-negative normal element $\Sigma\in\mathbb{R}\left[S_{n}\right]$
such that 
\[
\max\left(\lambda\left(k,\Sigma\right),\lambda^{\sgn}\left(k,\Sigma\right)\right)\lneq\lambda\left(\cay\left(S_{n},\Sigma\right)\right).
\]
\end{thm}

Stated differently, no family of irreps as in Definition \ref{def:family of irreps}
suffices to capture $\lambda\left(S_{n},\Sigma\right)$ for general
non-negative normal $\Sigma$. However, our analysis for the case
of a single conjugacy class does readily show the following.
\begin{thm}
\label{thm:normal-sets-positive-result}Let $\Sigma=\sum\alpha_{\sigma}\sigma\in\mathbb{R}[S_{n}]$
be non-negative and normal, and denote $\left|\Sigma\right|\stackrel{\mathrm{def}}{=}\rho_{triv}\left(\Sigma\right)=\sum_{\sigma}\alpha_{\sigma}$.
Then the spectral gap $\left|\Sigma\right|-\lambda\left(S_{n},\Sigma\right)$
is bounded by the spectral gap of the Schreier graph $\schone$ multiplied
by a decaying multiplicative factor:
\[
\left|\Sigma\right|-\lambda\left(S_{n},\Sigma\right)\ge\left[\left|\Sigma\right|-\lambda\left(1,\Sigma\right)\right]\cdot\left[1-o_{n}\left(1\right)\right].
\]
\end{thm}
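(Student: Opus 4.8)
The plan is to pass to normalized characters -- where, by Lemma~\ref{lem:evalues of normal sets}, each relevant eigenvalue becomes a weighted sum over conjugacy classes -- and then to feed Theorem~\ref{thm:one conj class} in as a black box, so that the statement reduces to a short, explicit comparison involving only the eight irreps of $\T_{n}$.

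Write a non-negative normal element as $\Sigma=\sum_{C}\alpha_{C}\sum_{\sigma\in C}\sigma$, the outer sum over the conjugacy classes $C$ of $S_{n}$ with all $\alpha_{C}\geq0$, and put $\widehat{\chi}_{\rho}(\sigma):=\chi_{\rho}(\sigma)/\dim\rho$, so $\widehat{\chi}_{\triv}\equiv1$ and $\widehat{\chi}_{\std}(\sigma)=\tfrac{\mathrm{fix}(\sigma)-1}{n-1}=1-\tfrac{\supp(\sigma)}{n-1}$. By Lemma~\ref{lem:evalues of normal sets}, $\rho(\Sigma)$ is the scalar $\bigl(\sum_{C}\alpha_{C}|C|\,\widehat{\chi}_{\rho}(C)\bigr)I$ for every $\rho\in\widehat{S_{n}}$, so, as $|\Sigma|=\sum_{C}\alpha_{C}|C|$,
\[
|\Sigma|-\lambda_{1}(\rho(\Sigma))=\sum_{C}\alpha_{C}|C|\,\bigl(1-\widehat{\chi}_{\rho}(C)\bigr)\qquad\text{for every }\rho,
\]
and in particular $|\Sigma|-\lambda_{1}(\std(\Sigma))=\sum_{C}\alpha_{C}|C|\,\tfrac{\supp(C)}{n-1}$. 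Now fix $n\geq N_{0}$. Theorem~\ref{thm:one conj class}, applied to the single conjugacy class $C$ with unit coefficients and divided through by $|C|>0$, states precisely that $\widehat{\chi}_{\rho}(C)\leq\max_{\rho'\in\T_{n}}\widehat{\chi}_{\rho'}(C)$ for every $\rho\in\widehat{S_{n}}\setminus\{\triv,\sgn\}$ and every $C$; hence $\lambda_{1}(\rho(\Sigma))\leq\sum_{C}\alpha_{C}|C|\max_{\rho'\in\T_{n}}\widehat{\chi}_{\rho'}(C)$ for such $\rho$. Consequently the theorem follows once we exhibit a sequence $\varepsilon_{n}\to0$, independent of $\Sigma,\rho,\sigma$, for which
\begin{equation}
\widehat{\chi}_{\rho'}(\sigma)\ \leq\ \widehat{\chi}_{\std}(\sigma)+\varepsilon_{n}\,\tfrac{\supp(\sigma)}{n-1}\qquad\text{for all }\rho'\in\T_{n},\ \sigma\in S_{n}:\tag{$\star$}
\end{equation}
summing $(\star)$ against the non-negative weights $\alpha_{C}|C|$ gives $\lambda_{1}(\rho(\Sigma))\leq\lambda_{1}(\std(\Sigma))+\varepsilon_{n}\bigl(|\Sigma|-\lambda_{1}(\std(\Sigma))\bigr)$ for every $\rho\ne\triv,\sgn$, and since $|\Sigma|-\lambda(S_{n},\Sigma)=\min_{\rho\ne\triv,\sgn}\bigl(|\Sigma|-\lambda_{1}(\rho(\Sigma))\bigr)$ this is the asserted bound with $o_{n}(1)=\varepsilon_{n}$.

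It remains to prove $(\star)$ -- a finite, elementary computation. For $\rho'=\std$ it is an equality. For every other $\lambda$ with $\lambda\in\T_{n}$ or $\lambda^{t}\in\T_{n}$ -- that is, $\lambda\in\{(n-2,2),(n-3,3),(n-3,2,1),(n-4,4),(n-2,1,1)\}$ -- the irrep $\lambda$ occurs with multiplicity one in a permutation module of $S_{n}$, so $\chi_{\lambda}(\sigma)$ has an explicit closed form, a fixed polynomial of bounded degree in the cycle counts $a_{i}(\sigma)$ (with $n$ appearing only through $a_{1}(\sigma)=\mathrm{fix}(\sigma)$), while $\dim\lambda$ is the corresponding polynomial in $n$. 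For instance, the permutation character on $2$-subsets of $[n]$ is $\triv+\std+\chi_{(n-2,2)}$, giving $\chi_{(n-2,2)}(\sigma)=\binom{\mathrm{fix}(\sigma)}{2}+a_{2}(\sigma)-\mathrm{fix}(\sigma)$; writing $m=\supp(\sigma)$ (so $\mathrm{fix}(\sigma)=n-m$, $a_{2}(\sigma)\leq m/2$) one reads off
\[
1-\widehat{\chi}_{(n-2,2)}(\sigma)=\frac{2mn-m^{2}-3m-2a_{2}(\sigma)}{n(n-3)}\ \geq\ \frac{m(n-4)}{n(n-3)}\ =\ \Bigl(1-\tfrac{2(n-2)}{n(n-3)}\Bigr)\tfrac{m}{n-1},
\]
which is $(\star)$ for $(n-2,2)$ with $\varepsilon_{n}=\tfrac{2(n-2)}{n(n-3)}$, uniformly in $\sigma$; the remaining cases $(n-3,3),(n-3,2,1),(n-4,4)$ are entirely analogous (morally, for $\sigma$ near the identity $\widehat{\chi}_{\rho'}(\sigma)\approx(\mathrm{fix}(\sigma)/n)^{k}$ with $k\geq1$ the number of boxes of $\rho'$ outside the first row, so by $1-t^{k}\geq1-t$ its gap to $1$ is at least $\approx\supp(\sigma)/n$, i.e.\ at least that of $\std$ up to the $n/(n-1)$ discrepancy, and the explicit polynomial makes this uniform). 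Finally, for a transposed member $\lambda^{t}\in\T_{n}$ one uses $\chi_{\lambda^{t}}=\sgn\cdot\chi_{\lambda}$: on even $\sigma$, $(\star)$ is the preceding case; on odd $\sigma$, $(\star)$ reads $1+\widehat{\chi}_{\lambda}(\sigma)\geq(1-\varepsilon_{n})\tfrac{\supp(\sigma)}{n-1}$, and substituting the polynomial for $\chi_{\lambda}$ into $1+\widehat{\chi}_{\lambda}(\sigma)=\tfrac{\dim\lambda+\chi_{\lambda}(\sigma)}{\dim\lambda}\geq0$ (together with $a_{i}(\sigma)\leq\supp(\sigma)/i$) yields this with an $O(1/n)$ loss as well. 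Taking $\varepsilon_{n}$ to be the maximum of the finitely many $O(1/n)$ quantities so produced -- and at least $2/n$, to absorb the $\tfrac{n}{n-1}$ factors -- completes the proof. There is no obstacle of real substance here: the genuinely hard ingredient, the control of $\widehat{\chi}_{\rho}$ for irreps of unbounded type via the Larsen--Shalev bounds, is exactly what Theorem~\ref{thm:one conj class} already packages and is invoked here only through it; the one point needing care is that $\varepsilon_{n}$ remain uniform over $C,\sigma,\rho$, which holds because it is controlled entirely by the eight fixed families.
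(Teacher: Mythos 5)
Your proposal is correct and follows essentially the paper's own route: your inequality $(\star)$ is exactly the paper's Lemma \ref{lem:std almost rules for any single conj class} (with $\varepsilon_n=\delta_n$), obtained in the same way by reducing to the eight irreps of $\T_{n}$ via Theorem \ref{thm:one conj class} and checking each explicitly (your worst-case constant $\tfrac{2(n-2)}{n(n-3)}$ for $(n-2,2)$ is the paper's $\delta_n$), and then summing against the non-negative weights $\alpha_C|C|$ using Lemma \ref{lem:evalues of normal sets}.
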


All the evidence we have so far supports the following generalization
of Aldous' spectral gap conjecture (Theorem \ref{thm:Aldous}), which
was raised, as was Question \ref{question:false-conj-sch} above,
during discussions between Gady Kozma and the second author:
\begin{conjecture}[Kozma-Puder]
\label{conj:kozma-puder} There is a number $k\geq4$ and a universal
constant $0<c<1$, such that for large enough $n$ and for every symmetric
non-negative $\Sigma\in\mathbb{R}\left[S_{n}\right]$,
\[
\left|\Sigma\right|-\lambda\left(S_{n},\Sigma\right)\ge c\cdot\left[\left|\Sigma\right|-\max\left(\lambda\left(k,\Sigma\right),\lambda^{\sgn}\left(k,\Sigma\right)\right)\right].
\]
\end{conjecture}

This conjecture, if true, would have far-reaching consequences. It
would yield that random pairs of permutations in $S_{n}$ give rise
to a uniform family of expanders, which is a long standing open question
(e.g., \cite[Problem 2.28]{lubotzky2012expander}). It would also
yield a conjecture of Babai \cite[Conjecture 1.7]{babai1992diameter}
that for every generating set $\Sigma$ of $A_{n}$, the diameter
of $\cay\left(A_{n},\Sigma\right)$ is bounded by some $n^{c}$ where
$c$ is a universal constant. See Section \ref{sec:A-Conjecture}
for more details.
\begin{rem}
There have been a few attempts to find phenomena as the ones described
here in families of groups other than the symmetric groups. Recently,
Cesi found an analog of Aldous' conjecture in signed symmetric groups
\cite{cesi2020spectral}. Greenhut found a small set of irreps of
the groups $\mathrm{SL}_{n}\left(\mathbb{F}_{q}\right)$ which detect
the mere existence of a spectral gap for all $n$ and prime powers
$q$ \cite{Greenhut2020}.
\end{rem}

The paper is organized as follows. In Section \ref{sec:A-Single-Conjugacy}
we consider sets consisting of a single conjugacy class and prove
Theorem \ref{thm:one conj class}. Section \ref{sec:Arbitrary-Normal-Sets}
deals with arbitrary normal sets and contains the proofs of Theorems
\ref{thm:normal-sets-negative-result} and \ref{thm:normal-sets-positive-result}.
In Section \ref{sec:A-Conjecture} we further discuss Conjecture \ref{conj:kozma-puder}
and its consequences.

\section{A Single Conjugacy Class\label{sec:A-Single-Conjugacy}}

We start with the following standard lemma\footnote{This lemma was popularized by Diaconis, for example in his book \cite{diaconis1988group}.},
which explains why all eigenvalues of $\cay\left(G,\Sigma\right)$
can be read off from the character table of $G$ when $\Sigma\in\mathbb{R}\left[G\right]$
is normal. The important quantity here is the normalized character
of $\rho\in\widehat{S_{n}}$ which we denote by $\x_{\rho}$\marginpar{$\protect\x_{\rho}$}:
\[
\x_{\rho}\left(\sigma\right)\stackrel{\mathrm{def}}{=}{\textstyle \frac{\chi_{\rho}\left(\sigma\right)}{\chi_{\rho}\left(1\right)}}\qquad\left(\sigma\in S_{n}\right),
\]
where $\chi_{\rho}\colon\sigma\mapsto\mathrm{trace}\,(\rho(\sigma))$
is the character of $\rho$. The character table of $S_{n}$ consists
only of integers (cf.\ \cite{fulton1997applications}), and $\left|\chi_{\rho}\left(\sigma\right)\right|\le\chi_{\rho}\left(1\right)$,
so that $\x_{\rho}\left(\sigma\right)\in\mathbb{Q}\cap\left[-1,1\right]$
for every $\rho\in\widehat{S_{n}}$, $\sigma\in S_{n}$.
\begin{lem}
\label{lem:evalues of normal sets}Let $G$ be a finite group, $\Sigma\in\mathbb{R}[G]$
a normal element, and $\rho\in\widehat{G}$. Denoting by $\alpha_{C}$
the coefficient in $\Sigma$ of each $\sigma$ in the conjugacy class
$C$, and by $\x_{\rho}\left(C\right)$ the value of $\tilde{\chi}_{\rho}$
on $C$, the matrix $\rho\left(\Sigma\right)$ equals the scalar 
\begin{equation}
\sum\nolimits _{C\in\mathrm{Conj}\left(G\right)}\alpha_{C}\left|C\right|\x_{\rho}\left(C\right).\label{eq:evalue of normal}
\end{equation}
\end{lem}

\begin{proof}
Since $\Sigma$ is in the center of $\mathbb{C}[G]$, $\rho\left(\Sigma\right)$
is an endomorphism of an irreducible representation, hence a scalar
by Schur's Lemma, and the trace of $\rho\left(\Sigma\right)$ is $\sum_{C}\alpha_{C}\left|C\right|\chi_{\rho}\left(C\right)$.
\end{proof}
This section studies the case of a single conjugacy class $\Sigma=\mathbf{1}_{C}$,
which means that 
\begin{equation}
\lambda\left(S_{n},\Sigma\right)=\left|C\right|\cdot\max_{\rho\in\widehat{S_{n}}\setminus\left\{ \mathrm{triv},\mathrm{sgn}\right\} }\x_{\rho}\left(C\right).\label{eq:evalues for a single conj class}
\end{equation}
In this case Theorem \ref{thm:one conj class} states that for every
$n\ge N_{0}$ and $\sigma\in S_{n}$ we have
\begin{equation}
\max_{\rho\in\widehat{S_{n}}\setminus\left\{ \triv,\sgn\right\} }\x_{\rho}\left(\sigma\right)=\max_{\rho\in\T_{n}}\x_{\rho}\left(\sigma\right).\label{eq:ten irreps rule}
\end{equation}
For a given $\sigma\in S_{n}$, if the maximum in the left hand side
of \eqref{eq:ten irreps rule} is obtained by some $\rho\in\widehat{S_{n}}\setminus\left\{ \triv,\sgn\right\} $,
we say that ``$\rho$ rules\marginpar{$\rho$ rules} for $\sigma$
in $S_{n}$''.

Our main tool in analyzing the normalized characters $\x_{\rho}\left(\sigma\right)$
is the following asymptotically sharp character bounds established
by Larsen and Shalev. We use the notation $c_{\ell}\left(\sigma\right)$\marginpar{$c_{\ell}\left(\sigma\right)$}
for the number of $\ell$-cycles in the permutation $\sigma\in S_{n}$.
For example, $c_{1}\left(\sigma\right)$ is the number of fixed points.
\begin{thm}[{\cite[Theorem 1.3]{larsen2008characters}}]
\label{thm:larsen-shalev}Let $\sigma\in S_{n}$ and let $f=\max\left(c_{1}\left(\sigma\right),1\right)$.
For every irrep $\rho\in\widehat{S_{n}}$, its character $\chi_{\rho}$
satisfies
\begin{equation}
\x_{\rho}\left(\sigma\right)\le\left|\chi_{\rho}\left(1\right)\right|^{-\frac{\log\left(n/f\right)}{2\log n}+\varepsilon_{n}},\label{eq:Larsen shalev}
\end{equation}
where $\varepsilon_{n}$ is a real number tending to $0$ as $n\to\infty$.
\end{thm}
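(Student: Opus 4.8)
This inequality is the main technical result of Larsen and Shalev \cite{larsen2008characters}; I outline the strategy I would follow. The basic engine is the Murnaghan--Nakayama rule, which computes $\chi_\rho(\sigma)$ by stripping from the Young diagram of $\rho$ a sequence of border strips whose sizes are the cycle lengths of $\sigma$, each contributing a $\pm$ sign, together with the hook-length formula for $\chi_\rho(1)$. Before doing anything I would fix the extremal cases so as to know the constants are right: on one side, $\rho=\std=(n-1,1)$, where $\chi_\rho(\sigma)=c_1(\sigma)-1$, already forces the normalization $\tfrac{\log(n/f)}{2\log n}$; on the other, fixed-point-free involutions tested against a high-dimensional $\rho$, where $|\x_\rho(\sigma)|$ is genuinely of order $\chi_\rho(1)^{-1/2}$. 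The latter is matched by the Fomin--Lulov-type estimate for a permutation $\sigma$ all of whose cycles have one length $\ell$, namely $|\x_\rho(\sigma)|\le\chi_\rho(1)^{-\frac{\ell-1}{\ell}+o(1)}$ (obtained by writing such a $\sigma$ as a power of a single long cycle and iterating the strip removal): the exponent $\tfrac{\ell-1}{\ell}$ is smallest at $\ell=2$, where it equals $\tfrac12=\tfrac{\log(n/1)}{2\log n}$, so $2$-cycles are the bottleneck and $\tfrac12$ is the best possible constant.

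The plan is then to build the general estimate from three kinds of input. First, the \emph{low-complexity case}: when $\rho$ has a bounded number of boxes outside its first row or column, $\chi_\rho(\sigma)$ is an explicit bounded-degree polynomial in the cycle-counts $c_\ell(\sigma)$ and $\chi_\rho(1)$ is of polynomial size, so the inequality is elementary. Second, the \emph{homogeneous case} above, together with its refinement to permutations all of whose cycles are \emph{long} (for which the crude Murnaghan--Nakayama count --- at most $O(n)$ outcomes per strip removal --- is already enough against any $\rho$ of super-polynomial dimension). Third, two reduction mechanisms that pass from these to an arbitrary $\sigma$: restriction along $S_n\downarrow S_{n-f}\times S_f$, which separates out the $f$ fixed points and lands one in a strictly smaller symmetric group, and the averaging identity $\mathbb{E}_{g\in S_n}\,\x_\rho(\sigma\cdot g\sigma g^{-1})=\x_\rho(\sigma)^2$ --- a Schur's-lemma consequence in the spirit of Lemma~\ref{lem:evalues of normal sets} --- which replaces $\sigma$ by a product of two conjugates having typically only about $\max(f^2/n,1)$ fixed points; the arithmetic $\log\!\big(n/(f^2/n)\big)=2\log(n/f)$ is exactly what makes this compatible with the target exponent.

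The hard part, and where I expect most of the effort to go, is twofold. (i) Whenever one expands $\chi_\rho(\sigma)$ --- via Murnaghan--Nakayama over many short cycles, via restriction, or via the Littlewood--Richardson rule after a cycle-type decomposition --- the naive estimate that replaces the signed sum by a sum of absolute values is hopelessly lossy, because the genuine decay comes from massive cancellation among the terms; so these expansions have to be organized so that either the number of surviving terms is small or the cancellation can be retained (this is, in particular, why the exact averaging identity, rather than a triangle-inequality bound, is the right tool for the many-$2$-cycle regime). (ii) The statement demands a single sequence $\varepsilon_n\to0$ valid simultaneously for \emph{all} $\rho\in\widehat{S_n}$ and \emph{all} $\sigma\in S_n$, whereas the hook-length asymptotics, the passage from ``a typical $g$'' to ``every $g$'' in the averaging step, and each layer of the reduction contribute their own $o(1)$; these have to be summed and shown to remain $o_n(1)$ with explicit, $\rho$- and $\sigma$-independent constants --- and, crucially, without degrading the borderline involution/near-involution estimates, which are the ones that pin the exponent.
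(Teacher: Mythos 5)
You should know at the outset that the paper does not prove this statement at all: it is imported verbatim as \cite[Theorem 1.3]{larsen2008characters}, and the authors use it as a black box. So there is no in-paper argument to compare yours against; the only fair comparison is with Larsen--Shalev's own proof. Measured against that, what you have written is a research plan rather than a proof. You correctly identify the extremal checks ($\std$ forcing the exponent $\frac{\log(n/f)}{2\log n}$, fixed-point-free involutions showing $\frac12$ is sharp), the Fomin--Lulov bound $|\x_{\rho}(\sigma)|\le\chi_{\rho}(1)^{-\frac{\ell-1}{\ell}+o(1)}$ for homogeneous cycle type as a key input, and the easy low-complexity case where $\chi_{\rho}$ is a bounded-degree polynomial in the $c_{\ell}$'s. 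But the entire content of the theorem lies in the two items you defer to ``the hard part'': organizing the expansions so that cancellation is not thrown away, and making every $o(1)$ uniform in $\rho$ and $\sigma$ simultaneously. Deferring these is not a gap in presentation; it is the theorem.

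Two of your proposed mechanisms would, as stated, not carry the load. The averaging identity $\mathbb{E}_{g}\,\x_{\rho}(\sigma g\sigma g^{-1})=\x_{\rho}(\sigma)^{2}$ is true, but it expresses $\x_{\rho}(\sigma)^{2}$ as a signed average over a distribution of conjugacy classes that includes atypical classes with many fixed points (indeed, when $g$ conjugates $\sigma$ to $\sigma^{-1}$ the product is the identity and contributes $1$); to extract a pointwise bound on $\x_{\rho}(\sigma)$ you would need control of $\x_{\rho}$ on essentially all classes in the support, which is the very bound you are trying to prove --- without an additional positivity or bootstrapping input the step is circular. Similarly, ``restriction along $S_{n}\downarrow S_{n-f}\times S_{f}$'' produces a sum of Littlewood--Richardson terms whose naive estimation loses exactly the polynomial factors that the exponent $-\frac{\log(n/f)}{2\log n}$ cannot afford. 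Larsen and Shalev's actual argument avoids both of these routes: it proceeds by induction on $n$, peeling off cycles of $\sigma$ via exact character recursions, comparing degrees of the resulting skew/smaller diagrams through their notion of virtual degree $D(\lambda)$ (a hook-type quantity approximating $\chi_{\rho}(1)$ up to subexponential factors), and running a case analysis on the shape of $\lambda$ (few versus many boxes outside the first row and column). If you want a proof rather than a roadmap, that is the machinery you would need to reconstruct; as it stands, your text should be read as a correct identification of the difficulty, not a resolution of it.
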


Our strategy in proving Theorem \ref{thm:one conj class} is as follows:
using Theorem \ref{thm:larsen-shalev}, we show that for large enough
$n$, if $\sigma\in S_{n}$ has \textbf{exactly two} fixed points,
then the standard representation $\std$ rules, namely, the maximal
normalized character is $\x_{\std}\left(\sigma\right)=\frac{1}{n-1}$.
Using a simple induction argument we then show that the same is true
for every large enough $n$ when $c_{1}\left(\sigma\right)\ge2$.
Finally, we use Theorem \ref{thm:larsen-shalev} again to deal with
the case $c_{1}\left(\sigma\right)\in\left\{ 0,1\right\} $. Indeed,
Theorem \ref{thm:one conj class} follows immediately from the following
two propositions, which we prove in the following two subsections.
\begin{prop}
\label{prop:c1 ge 2}There is some $N_{1}\in\mathbb{N}$ such that
for every $n\ge N_{1}$ and every $\sigma\in S_{n}$ with $c_{1}\left(\sigma\right)\ge2$,
the standard irrep $\std=\left(n-1,1\right)$ rules.
\end{prop}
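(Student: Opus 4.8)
The plan is to first handle the base case of permutations with \emph{exactly two} fixed points, and then bootstrap to $c_1(\sigma)\ge 2$ by an induction on the number of fixed points. For the base case, fix $\sigma\in S_n$ with $c_1(\sigma)=2$, so that $f=\max(c_1(\sigma),1)=2$. We must show that for $n$ large, $\x_\rho(\sigma)\le \x_{\std}(\sigma)=\tfrac{1}{n-1}$ for every $\rho\in\widehat{S_n}\setminus\{\triv,\sgn\}$, with equality achieved. Apply Theorem \ref{thm:larsen-shalev}: with $f=2$ the exponent is $-\tfrac{\log(n/2)}{2\log n}+\varepsilon_n = -\tfrac12 + \tfrac{\log 2}{2\log n}+\varepsilon_n$, which tends to $-\tfrac12$. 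Hence $\x_\rho(\sigma)\le |\chi_\rho(1)|^{-1/2+o(1)}$. So any irrep $\rho$ with $\x_\rho(\sigma) > \tfrac{1}{n-1}$ must have $\dim\rho = |\chi_\rho(1)|$ bounded by roughly $(n-1)^{2+o(1)}$, i.e.\ polynomial in $n$ of degree close to $2$. The classification of low-dimensional irreps of $S_n$ (the minimal-degree bound: apart from $\triv$ and $\sgn$, the smallest dimension is $n-1$, achieved by $\std$ and $\std^t$, and the next few are of order $n^2/2$, etc.) then pins down a \emph{finite, $n$-independent} list of candidate irreps whose dimension grows no faster than a fixed power of $n$ — essentially the families parametrized by Young diagrams with a bounded number of cells outside the first row or column. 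For each such candidate family one computes $\x_\rho(\sigma)$ explicitly (using the known hook-length / Frobenius-type formulas for characters of near-trivial irreps, or the Jucys–Murphy / Specht-module description) and checks directly that it does not exceed $\tfrac{1}{n-1}$ once $n$ is large, using the hypothesis $c_1(\sigma)=2$ (which forces $\sigma$ to move $n-2$ points and hence makes these characters small). This is the technical heart and likely where a careful but routine case analysis is needed.

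For the inductive step, suppose the base case is established and let $\sigma\in S_n$ have $c_1(\sigma)=k\ge 2$. Write $\sigma = \sigma'\cdot(\text{a fixed point})$, i.e.\ view $\sigma$ as an element of $S_{n-1}$ (acting on the $n-1$ points other than one chosen fixed point of $\sigma$) together with one extra fixed point. The branching rule for $S_{n-1}\subset S_n$ expresses each $\x_\rho(\sigma)$ for $\rho\in\widehat{S_n}$ as a convex combination (with weights $\dim\rho'/\dim\rho$) of the values $\x_{\rho'}(\sigma)$ over the irreps $\rho'\in\widehat{S_{n-1}}$ obtained by removing one corner cell of $\rho$. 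Concretely, $\chi_\rho(\sigma) = \sum_{\rho'} \chi_{\rho'}(\sigma)$ where the sum is over the restrictions, and $\sigma\in S_{n-1}$ now has $k-1$ fixed points. By the inductive hypothesis, $\x_{\rho'}(\sigma)\le \tfrac{1}{n-2}$ for all $\rho'\ne \triv_{n-1},\sgn_{n-1}$, and one has to control the contribution of the exceptional $\rho'$. Since $\rho'=\triv_{n-1}$ occurs in the restriction of $\rho$ only when $\rho\in\{\triv_n,\std_n\}$, and $\rho'=\sgn_{n-1}$ only when $\rho\in\{\sgn_n,\std_n^t\}$, for $\rho\notin\{\triv,\sgn,\std,\std^t\}$ the bound $\tfrac{1}{n-2}$ applies to \emph{every} term, giving $\x_\rho(\sigma)\le \tfrac{1}{n-2}$; a slightly more careful estimate (using that at least two corners are removed, and the convexity weights) upgrades this to something $\le \tfrac{1}{n-1}$. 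For $\rho=\std_n$ one computes $\x_{\std}(\sigma)=\tfrac{c_1(\sigma)-1}{n-1}$ directly, and similarly for $\std_n^t$ one gets $\pm$ this times the sign of $\sigma$; so $\std$ (or possibly $\std^t$, which lies in $\T_n$) indeed rules. The main obstacle in this step is getting the numerics of the convex combination to close cleanly — ensuring the $\tfrac{1}{n-2}$ from the hypothesis actually improves to $\tfrac{1}{n-1}$ after accounting for the weights; one may need to treat the first inductive step ($k=3$, reducing to $k=2$) with extra care, or strengthen the inductive statement to a quantitative bound like $\x_\rho(\sigma)\le \tfrac{c_1(\sigma)-1}{n-1}$ for $\rho\notin\T_n$.

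Finally, one has to exhibit that the maximum is \emph{attained} inside $\{\triv,\sgn\}^c$, which is immediate since $\x_{\std}(\sigma)=\tfrac{c_1(\sigma)-1}{n-1}\ge \tfrac1{n-1}>0$ when $c_1(\sigma)\ge 2$, so $\std$ is a legitimate non-trivial competitor and the inequality we proved shows it wins. Taking $N_1$ large enough to make the $\varepsilon_n$ in Theorem \ref{thm:larsen-shalev} and the finitely many low-dimensional case checks all work out completes the proof.
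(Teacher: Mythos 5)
Your overall skeleton is the same as the paper's: Larsen--Shalev with $f=2$ together with a lower bound on dimensions to reduce the two-fixed-point case to finitely many low-level families that are checked explicitly, followed by a branching-rule/convexity induction that adds fixed points one at a time. One remark on the inductive step: the worry about ``upgrading $\tfrac{1}{n-2}$ to $\tfrac{1}{n-1}$'' dissolves once you compare against the actual ruling value rather than a worst case. The hypothesis ``$\std$ rules for $\sigma$ in $S_{n-1}$'' gives $\x_{\rho'}\left(\sigma\right)\le\x_{\std'}\left(\sigma\right)=\tfrac{c_{1}\left(\sigma\right)-2}{n-2}$ for every $\rho'$ occurring in the restriction of $\rho\notin\left\{ \triv,\sgn,\std,\std^{t}\right\} $, and $\tfrac{c_{1}-2}{n-2}\le\tfrac{c_{1}-1}{n-1}=\x_{\std}\left(\sigma\right)$ always; this is precisely the strengthened inductive statement you suggest at the end, and it is how the paper closes that step.

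The genuine gap is at the base of your induction. Stripping fixed points from $\sigma\in S_{n}$ with $c_{1}\left(\sigma\right)\ge2$ bottoms out in $S_{\left|\supp\left(\sigma\right)\right|+2}$ with exactly two fixed points, and your two-fixed-point argument (which needs $\varepsilon_{n}$ small and $n$ large for both the Larsen--Shalev bound and the low-dimensional case check) only establishes that base case when $\left|\supp\left(\sigma\right)\right|+2$ exceeds some threshold. No choice of $N_{1}$ repairs this: a transposition, a $3$-cycle, or any permutation of bounded support inside a huge $S_{n}$ reduces along your induction to a tiny symmetric group where the base case says nothing, and these bounded-support classes (transpositions are exactly Aldous' setting) are squarely within the proposition. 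The paper needs a separate idea here: for a fixed $\sigma\in S_{r}$ viewed in $S_{n}$, any ruling irrep other than $\std,\std^{t}$ is faithful, so its normalized character is at most some constant $B<1$; by the convexity lemma this bound persists as $n$ grows as long as $\std$ does not rule, whereas $\x_{\std}\left(\sigma\right)=\frac{n-\left|\supp\left(\sigma\right)\right|-1}{n-1}\to1$, so $\std$ eventually rules and then rules for all larger $n$ by the same monotonicity corollary; since there are only finitely many fixed-point-free classes of support at most $M$, this yields a uniform threshold $N_{4}\left(M\right)$. You need this argument (or some substitute, e.g. explicit character estimates for bounded-support classes) to cover $\left|\supp\left(\sigma\right)\right|$ below your threshold; as written, your proof covers only permutations with at least two fixed points \emph{and} large support.
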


\begin{prop}
\label{prop:c1=00003D0 or c1=00003D1}There is some $N_{2}\in\mathbb{N}$
such that for every $n\ge N_{2}$ and every $\sigma\in S_{n}$ with
$c_{1}\left(\sigma\right)\le1$, one of the irreps in $\T_{n}$ rules.
\end{prop}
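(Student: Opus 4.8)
The plan is to combine Theorem~\ref{thm:larsen-shalev} with explicit character computations for a number of low-dimensional irreps that is bounded independently of $n$. Since $c_{1}(\sigma)\le1$ we have $f=1$ in Theorem~\ref{thm:larsen-shalev}, so $\frac{\log(n/f)}{2\log n}=\frac12$ and we obtain the clean estimate $\x_{\rho}(\sigma)\le\dim(\rho)^{-1/2+\varepsilon_{n}}$ for every $\rho\in\widehat{S_{n}}$. This is very strong for high-dimensional $\rho$ but useless for the small irreps in $\T_{n}$ (for which the right-hand side tends to $1$); so the Larsen--Shalev bound can only be used to discard all but finitely many candidate rulers, and the actual ruler must then be located by hand.

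First I would establish a quantitative lower bound $\max_{\rho\in\T_{n}}\x_{\rho}(\sigma)\ge n^{-K}$ with $K$ an absolute constant, valid for every $\sigma$ with $c_{1}(\sigma)\le1$ and every large $n$. For this one uses that the normalized characters of the eight irreps in $\T_{n}$ are given by explicit low-degree polynomials in $c_{1}(\sigma),c_{2}(\sigma),c_{3}(\sigma),c_{4}(\sigma)$ and $\sgn(\sigma)$ --- coming from the decompositions of the permutation modules $M^{(n-k,k)}$ on $k$-subsets for $k\le4$, from the exterior powers $\wedge^{i}\std$, and from the identity $\x_{\rho^{t}}(\sigma)=\sgn(\sigma)\,\x_{\rho}(\sigma)$ --- and then runs a short case split on $c_{1}\in\{0,1\}$, on the smallest cycle length $\ell\ge2$ with $c_{\ell}(\sigma)\ne0$, and on the parity of $\sigma$. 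For example $\std$ contributes $\frac{c_{1}-1}{n-1}$, which is $0$ when $c_{1}=1$ and becomes $\frac{1}{n-1}$ for $\std^{t}$ when $c_{1}=0$ and $\sigma$ is odd; $(n-2,2)$ handles the case $c_{2}\ge1$; $(n-2,1,1)^{t}=\wedge^{2}\std\otimes\sgn$ handles even permutations with few short cycles; and $(n-3,3)$, $(n-3,2,1)$, $(n-4,4)$ cover the remaining regimes, with $(n-3,2,1)$ being exactly what is needed for an even permutation of cycle type ``one fixed point, one long cycle''.

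Feeding this lower bound back into Theorem~\ref{thm:larsen-shalev} gives a polynomial threshold $T(n)=n^{2K+o(1)}$ such that every $\rho$ with $\dim(\rho)\ge T(n)$ has $\x_{\rho}(\sigma)$ strictly below $\max_{\rho\in\T_{n}}\x_{\rho}(\sigma)$, hence cannot rule. By the standard dimension estimates for $S_{n}$ --- $\dim\rho_{(n-m,\mu)}=\Theta_{\mu}(n^{m})$, and the same for the transpose --- the irreps below $T(n)$ are exactly those whose Young diagram, or its transpose, has at most some fixed number $M$ of boxes outside the first row, a list of length bounded uniformly in $n$. For each such ``small-to-medium'' irrep one writes its normalized character (a bounded-degree shifted-symmetric polynomial in the $c_{\ell}(\sigma)$) and compares it, regime by regime, with the champion from $\T_{n}$ produced above, checking that the champion is always at least as large. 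This is where the transposes in $\T_{n}$ earn their place: they absorb $\sgn(\sigma)$ and the occasional exact ties, e.g. for the single $n$-cycle, where the ruler alternates between $\std^{t}$ and $(n-2,1,1)^{t}$ according to the parity of $n$. (Proposition~\ref{prop:c1 ge 2} is not needed here, but the same computations recur.)

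The main obstacle is this final comparison in the ``thin'' regimes, where $\sigma$ is one or two very long cycles together with at most one fixed point. There the winning value from $\T_{n}$ is only polynomially small of fairly high degree (as small as $\sim n^{-4}$), so the threshold $T(n)$ is a high power of $n$ and a genuinely nontrivial list of intermediate irreps must be handled directly; and in some of these cases (for instance a fixed point plus an $(n-1)$-cycle) dimension bounds are useless and one must invoke the Murnaghan--Nakayama rule to see that $\x_{\rho}(\sigma)=0$ for every $\rho$ outside a short list of near-hook shapes, and that among those near-hooks the optimum is attained inside $\T_{n}$. Carrying out this finite but intricate casework uniformly in $n$, and bookkeeping the error term $\varepsilon_{n}$ against the explicit polynomial lower bounds, is the technical heart of the argument.
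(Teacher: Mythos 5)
Your plan is essentially the paper's own proof: for $c_{1}(\sigma)\le1$ the Larsen--Shalev bound with $f=1$ kills every irrep whose dimension exceeds a fixed power of $n$ (the paper proves a uniform bound $\dim\rho\ge n^{6.05}$ for diagrams with at least $14$ boxes outside both the first row and the first column), an explicit lower bound $\max_{\rho\in\T_{n}}\x_{\rho}(\sigma)\ge\frac{3}{n(n-2)(n-4)}$ plays the role of your $n^{-K}$, and the finitely many surviving families are then beaten by direct comparison of the polynomial character formulas in $c_{1},\ldots,c_{k}$. The only cosmetic difference is in the residual casework: the paper disposes of the intermediate families ($5\le k\le13$ boxes outside the first row or column) softly, via the monomial-degree bound of Lemma \ref{lem:monomials in the polynomial for characters} (character $O(n^{\lfloor k/2\rfloor})$ or $O(n^{\lfloor k/3\rfloor})$ against dimension $\Theta(n^{k})$), so the Murnaghan--Nakayama detour you anticipate for the thin regimes is unnecessary, and the explicit regime-by-regime comparison is confined to $k\le4$ exactly as you describe.
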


\subsection{The case $\mathbf{c_{1}\left(\sigma\right)\ge2}$}

The following lemma goes back to Frobenius (cf.\ \cite[§I.7.14]{macdonald1979symmetric}).
We give its proof for completeness.
\begin{lem}
\label{lem:poly in c_1,c_2,..}Let $\left\{ \rho_{n}\in\smash{\widehat{S_{n}}}\right\} _{n\ge n_{0}}$
be a family of irreps as in Definition \ref{def:family of irreps}
with constant structure outside the first row, so that the first row
of $\rho_{n}$ has exactly $n-k$ blocks. Then there is a polynomial
$p\in\mathbb{Q}\left[c_{1},\ldots,c_{k}\right]$ so that for $n\ge2k$
and $\sigma\in S_{n}$, 
\begin{align*}
\chi_{\rho_{n}}\left(\sigma\right) & =p\left(c_{1}\left(\sigma\right),\ldots,c_{k}\left(\sigma\right)\right),\ \text{and}\\
\chi_{\rho_{n}^{t}}\left(\sigma\right) & =\sgn\left(\sigma\right)\cdot p\left(c_{1}\left(\sigma\right),\ldots,c_{k}\left(\sigma\right)\right).
\end{align*}
In particular, $\dim\rho_{n}=\chi_{\rho_{n}}\left(1\right)$ is given
by a polynomial in $n$, equal to $p\left(n,0,\ldots,0\right)$.
\end{lem}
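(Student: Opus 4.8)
The plan is to reduce everything to the permutation modules $M^{\lambda}$ and then invert Young's rule. First observe that the transpose and dimension assertions are immediate consequences of the formula $\chi_{\rho_n}=p(c_1,\dots,c_k)$: since $\rho_n^{t}=\rho_n\otimes\sgn$ we get $\chi_{\rho_n^{t}}(\sigma)=\sgn(\sigma)\,\chi_{\rho_n}(\sigma)$; and evaluating at $\sigma=\mathrm{id}$, where $c_1=n$ and $c_\ell=0$ for $\ell\ge2$, gives $\dim\rho_n=\chi_{\rho_n}(1)=p(n,0,\dots,0)$. Note also that ``constant structure outside the first row with $n-k$ blocks in the first row'' means precisely that $\rho_n$ is the irrep labelled by the partition $(n-k,\mu)$ for a single fixed $\mu\vdash k$, and the hypothesis $n\ge 2k$ guarantees $n-k\ge k\ge\mu_1$, so this is a genuine partition of $n$ with plenty of room in the first row.

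Next I would analyse the permutation module $M^{(n-k,\mu)}$ induced from the trivial representation of the Young subgroup $S_{n-k}\times S_{\mu_1}\times\cdots\times S_{\mu_s}$, whose character at $\sigma$ counts the $(n-k,\mu)$-tabloids fixed by $\sigma$. A tabloid is fixed iff each of its rows is a union of cycles of $\sigma$; since rows $2,3,\dots$ have lengths $\le k$, only cycles of length $\le k$ can occupy them, they must fill each such row exactly, and then all remaining cycles (the long ones together with the leftover short ones) are forced into the first row, which has exactly the right total length $n-k$. Hence the number of fixed tabloids is the number of ways of distributing the short cycles of $\sigma$ among the lower rows according to their lengths — an explicit polynomial $Q_\mu\in\mathbb{Q}[c_1,\dots,c_k]$ (a sum of products of falling factorials in the $c_\ell$ divided by symmetry factors), which for $n\ge 2k$ does not depend on $n$. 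The same holds for $M^{(n-j,\eta)}$ for every $\eta\vdash j$ with $j\le k$.

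Finally I would invert Young's rule. We have $M^{(n-k,\mu)}=\bigoplus_{\nu\trianglerighteq(n-k,\mu)}K_{\nu,(n-k,\mu)}\,S^{\nu}$ with $K_{(n-k,\mu),(n-k,\mu)}=1$, and every $\nu$ occurring has first row $\ge n-k$, hence is of the form $(n-j,\eta)$ with $\eta\vdash j$ and $j\le k$. Since this finite matrix of Kostka numbers is unitriangular for the dominance order, it is invertible over $\mathbb{Z}$, and one obtains $\chi_{\rho_n}=\chi^{(n-k,\mu)}=\sum_{j\le k,\ \eta\vdash j}a_{\mu,\eta}\,\chi_{M^{(n-j,\eta)}}$ with integer coefficients $a_{\mu,\eta}$. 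Combined with the previous paragraph this gives $\chi_{\rho_n}(\sigma)=\sum a_{\mu,\eta}\,Q_\eta(c_1(\sigma),\dots,c_k(\sigma))=:p(c_1(\sigma),\dots,c_k(\sigma))$.

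The one point that genuinely needs care — and which I expect to be the main obstacle — is that the coefficients $a_{\mu,\eta}$, as well as the dominance comparisons among the relevant partitions, must be independent of $n$ once $n\ge 2k$; otherwise one would get a different polynomial for each $n$. This is the standard stabilization of the Kostka numbers $K_{(n-i,\zeta),(n-j,\eta)}$ for $n$ large, which can be seen directly by inspecting the semistandard tableaux involved, or more transparently from the Jacobi--Trudi determinant $s_{(n-k,\mu)}=\det\bigl(h_{\lambda_i-i+j}\bigr)$: under the characteristic map $\mathrm{ch}(\chi_{M^{\eta}})=h_{\eta}$, expanding this determinant exhibits $\chi_{\rho_n}$ as a $\mathbb{Z}$-combination of the $\chi_{M^{(n-j,\eta)}}$ in which every entry but one has bounded subscript, so the coefficients are manifestly $n$-independent. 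Everything else is routine bookkeeping with tabloids.
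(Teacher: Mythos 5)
Your proof is correct and follows essentially the same route as the paper: reduce to the permutation modules $M^{\lambda}$, whose characters are visibly $n$-independent polynomials in $c_{1},\ldots,c_{k}$, and then write $\chi_{\rho_{n}}$ as an integer combination of these with coefficients that stabilize for large $n$. Your Jacobi--Trudi justification of that stabilization is a nice explicit substitute for the paper's brief appeal to the (unitriangular) transition between the $M^{\mu}$'s and the irreducibles, but it is the same argument in substance.
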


The fact that $\chi_{\rho_{n}}\left(1\right)=dim$$\rho_{n}$ is given
by a polynomial in $n$ is also evident from the hook length formula
(e.g., \cite[Formula 4.12]{fulton1991representation}). The polynomials
associated with every family of irreps with at most four blocks outside
the first row are listed in Table \ref{tab:Dimensions-and-characters}.
\begin{proof}
For a partition $\lambda=\left(\lambda^{\left(1\right)},\ldots,\lambda^{\left(\ell\right)}\right)\vdash n$,
let $M^{\lambda}$ denote the reducible representation associated
with $\lambda$: this is the permutation representation describing
the action of $S_{n}$ on partitions of $\left\{ 1,\ldots,n\right\} $
with sizes of blocks given by $\lambda$ (see \cite[\S 7.2]{fulton1997applications}).
It is not hard to see that the statement of the lemma holds for families
$\left\{ M^{\lambda_{n}}\right\} _{n\ge n_{0}}$ when $\lambda_{n+1}$
is obtained from $\lambda_{n}$ by increasing $\lambda^{\left(1\right)}$
by one. For example, if $\lambda_{n}=\left(n-3,2,1\right)$, then
$\chi_{M^{\lambda_{n}}}\left(\sigma\right)$, which equals the number
of fixed points of $\sigma$ in this action of $S_{n}$, is $c_{1}\cdot\binom{c_{1}-1}{2}+c_{1}c_{2}$.

For every $\lambda\vdash n$ as above, the character $\chi_{\rho^{\lambda}}$
of the irrep $\rho^{\lambda}$ corresponding to $\lambda$ is given
by a linear combination with integer coefficients of the representations
$\left\{ M^{\mu}\,\middle|\,\mu\triangleleft\lambda\right\} $, where
``$\triangleleft$'' marks the dominance relation (see \cite[\S 7.2]{fulton1997applications}).
Moreover, when $\lambda^{\left(1\right)}=n-k\ge\frac{n}{2}$, this
linear combination is independent of $n$, namely, the coefficient
of every $\mu\triangleleft\lambda$ depends only on the structure
of $\mu$ outside the first row. For example, 
\[
\chi_{\rho^{\left(n-2,1,1\right)}}=\chi_{M^{\left(n-2,1,1\right)}}-\chi_{M^{\left(n-2,2\right)}}-\chi_{M^{\left(n-1,1\right)}}+\chi_{M^{\left(n\right)}}.
\]
The statement of the lemma follows.
\end{proof}
\begin{table}
\begin{centering}
\begin{tabular}{|c|c|c|}
\hline 
$\rho$ & $\dim\left(\rho\right)=\chi_{\rho}\left(1\right)$ & $\chi_{\rho}\left(\sigma\right)$ when $c_{i}\left(\sigma\right)=c_{i}$\tabularnewline
\hline 
\hline 
$\left(n\right)$ & $1$ & $1$\tabularnewline
\hline 
$\left(n-1,1\right)$ & $n-1$ & $c_{1}-1$\tabularnewline
\hline 
$\left(n-2,2\right)$ & $\frac{n\left(n-3\right)}{2}$ & $\frac{c_{1}\left(c_{1}-3\right)}{2}+c_{2}$\tabularnewline
\hline 
$\left(n-2,1,1\right)$ & $\frac{\left(n-1\right)\left(n-2\right)}{2}$ & $\frac{\left(c_{1}-1\right)\left(c_{1}-2\right)}{2}-c_{2}$\tabularnewline
\hline 
$\left(n-3,3\right)$ & $\frac{n\left(n-1\right)\left(n-5\right)}{6}$ & $\frac{c_{1}\left(c_{1}-1\right)\left(c_{1}-5\right)}{6}+\left(c_{1}-1\right)c_{2}+c_{3}$\tabularnewline
\hline 
$\left(n-3,2,1\right)$ & $\frac{n\left(n-2\right)\left(n-4\right)}{3}$ & $\frac{c_{1}\left(c_{1}-2\right)\left(c_{1}-4\right)}{3}-c_{3}$\tabularnewline
\hline 
$\left(n-3,1,1,1\right)$ & $\frac{\left(n-1\right)\left(n-2\right)\left(n-3\right)}{6}$ & $\frac{\left(c_{1}-1\right)\left(c_{1}-2\right)\left(c_{1}-3\right)}{6}-\left(c_{1}-1\right)c_{2}+c_{3}$\tabularnewline
\hline 
$\left(n-4,4\right)$ & $\frac{n\left(n-1\right)\left(n-2\right)\left(n-7\right)}{24}$ & $\frac{c_{1}\left(c_{1}-1\right)\left(c_{1}-2\right)\left(c_{1}-7\right)}{24}\!+\!\frac{\left(c_{1}^{2}-3c_{1}-1\right)c_{2}}{2}\!+\!\frac{c_{2}^{2}}{2}\!+\!\left(c_{1}\!-\!1\right)c_{3}\!+\!c_{4}$\tabularnewline
\hline 
$\left(n-4,3,1\right)$ & $\frac{n\left(n-1\right)\left(n-3\right)\left(n-6\right)}{8}$ & $\frac{c_{1}\left(c_{1}-1\right)\left(c_{1}-3\right)\left(c_{1}-6\right)}{8}+\frac{\left(c_{1}^{2}-3c_{1}+3\right)c_{2}}{2}-\frac{c_{2}^{2}}{2}-c_{4}$\tabularnewline
\hline 
$\left(n-4,2,2\right)$ & $\frac{n\left(n-1\right)\left(n-4\right)\left(n-5\right)}{12}$ & $\frac{c_{1}\left(c_{1}-1\right)\left(c_{1}-4\right)\left(c_{1}-5\right)}{12}+\left(c_{2}-2\right)c_{2}-\left(c_{1}-1\right)c_{3}$\tabularnewline
\hline 
$\left(n-4,2,1,1\right)$ & $\frac{n\left(n-2\right)\left(n-3\right)\left(n-5\right)}{8}$ & $\frac{c_{1}\left(c_{1}-2\right)\left(c_{1}-3\right)\left(c_{1}-5\right)}{8}-\frac{\left(c_{1}^{2}-3c_{1}-1\right)c_{2}}{2}-\frac{c_{2}^{2}}{2}+c_{4}$\tabularnewline
\hline 
$\negmedspace\left(n-4,1,1,1,1\right)\negmedspace$ & $\negmedspace\frac{\left(n-1\right)\left(n-2\right)\left(n-3\right)\left(n-4\right)}{24}\negmedspace$ & $\negmedspace\frac{\left(c_{1}-1\right)\left(c_{1}-2\right)\left(c_{1}-3\right)\left(c_{1}-4\right)}{24}\!-\!\frac{\left(c_{1}^{2}-3c_{1}+3\right)c_{2}}{2}\!+\!\frac{c_{2}^{2}}{2}\!+\!(c_{1}\!-\!1)c_{3}-c_{4}\negmedspace$\tabularnewline
\hline 
\end{tabular}
\par\end{centering}
\caption{\label{tab:Dimensions-and-characters}Dimensions and characters of
irreps with at most four blocks outside the first row.}
\end{table}

\begin{lem}
\label{lem:dim > n^2}Let $n\ge13$, and let $\rho\in\widehat{S_{n}}$
be an irrep whose Young diagram has at least three blocks outside
the first row and at least three blocks outside the first column.
Then $\dim\rho\ge n^{2.05}$.
\end{lem}

\begin{proof}
If $\rho$ has exactly three blocks outside the first row then $\rho$
is one of $\left(n-3,3\right)$, $\left(n-3,2,1\right)$ or $\left(n-3,1,1,1\right)$,
in which case its dimension is $\frac{n\left(n-1\right)\left(n-5\right)}{6}$,
$\frac{n\left(n-2\right)\left(n-4\right)}{3}$ or $\frac{\left(n-1\right)\left(n-2\right)\left(n-3\right)}{6}$,
respectively. In each of these cases $\dim\rho\ge n^{2.05}$ for $n\ge13$.
The transpose case where $\rho$ has exactly three blocks outside
the first column is identical.

For the case $\rho$ has at least four blocks outside the first row/column
we use induction on $n$. It is easy to check directly (on a computer)
that the statement is true for $n=13,14$: all 93 irreps of $S_{13}$
and 127 irreps of $S_{14}$ satisfying the assumption of the lemma
have dimension $\ge n^{2.05}$. For $n\ge15$, we assume the statement
holds for $n-1$ and for $n-2$ and that $\rho\in\widehat{S_{n}}$
has at least four blocks outside the first row and at least four blocks
outside the first column. By the branching rule, $\dim\rho=\sum_{\rho'=\rho-\square}\dim\rho'$,
the sum being over all $\rho'\in\widehat{S_{n-1}}$ obtained from
$\rho$ by removing one block. If the Young diagram corresponding
to $\rho$ is not a rectangle, there are at least two such $\rho'$,
each with at least three blocks outside the first row and outside
the first column, and we are done as $2\left(n-1\right)^{2.05}\ge n^{2.05}$
for $n\ge15$. Finally, if $\rho$ is a rectangle, it has at least
$\frac{n}{2}>4$ blocks outside the first row and outside the first
column, and there are exactly two ways to remove two blocks from $\rho$.
The branching rule now gives $\dim\rho=\dim\rho_{1}+\dim\rho_{2}$
with $\rho_{1},\rho_{2}\in\widehat{S_{n-2}}$ satisfying the assumption
in the lemma. We are done as $2\left(n-2\right)^{2.05}\ge n^{2.05}$
for $n\ge15$.
\end{proof}
\begin{lem}
\label{lem:2 fixed points}There is some $N_{3}\in\mathbb{N}$ such
that $\std$ rules for every $n\ge N_{3}$ and $\sigma\in S_{n}$
with $c_{1}\left(\sigma\right)=2$. Namely, $\x_{\rho}\left(\sigma\right)\le\x_{\std}\left(\sigma\right)=\frac{1}{n-1}$
for every $\rho\in\widehat{S_{n}}\setminus\left\{ \mathrm{triv},\mathrm{sgn}\right\} $.
\end{lem}

\begin{proof}
Fix $N_{3}\ge13$ so that for every $n\ge N_{3}$, the term $\varepsilon_{n}$
in \eqref{eq:Larsen shalev} satisfies
\[
2.05\cdot\left(-\frac{1}{2}+\frac{\log2}{2\log n}+\varepsilon_{n}\right)\le-1.
\]
Then, if $n\ge N_{3}$ and $\rho\in\widehat{S_{n}}$ satisfies the
assumptions in Lemma \ref{lem:dim > n^2}, it follows from Theorem
\ref{thm:larsen-shalev} that 
\[
\x_{\rho}\left(\sigma\right)\le\left(n^{2.05}\right)^{-\frac{1}{2}+\frac{\log2}{2\log n}+\varepsilon_{n}}\le\frac{1}{n}\le\frac{1}{n-1}.
\]
Finally, if $\rho$ is one of the five remaining irreps $\left(n-1,1\right)^{t}$,
$\left(n-2,2\right)$, $\left(n-2,2\right)^{t}$, $\left(n-2,1,1\right)$
and $\left(n-2,1,1\right)^{t}$, we use the explicit expressions in
Table \ref{tab:Dimensions-and-characters} to show that $\x_{\rho}\left(\sigma\right)\le\frac{1}{n-1}$.
For example, if $\rho=\left(n-2,1,1\right)^{t}$ and $\sigma$ is
odd, we get 
\[
\x_{\rho}\left(\sigma\right)=\frac{2c_{2}\left(\sigma\right)}{\left(n-1\right)\left(n-2\right)}\le\frac{2\cdot\frac{n-2}{2}}{\left(n-1\right)\left(n-2\right)}=\frac{1}{n-1},
\]
so at worst there is a tie between $\left(n-1,1\right)$ and $\left(n-2,1,1\right)^{t}$
when $c_{1}\left(\sigma\right)=2$.
\end{proof}
The following easy but crucial lemma says that the normalized character
of a permutation with at least one fixed point (which we can take
without loss of generality to be $n$) is bounded from both sides
by normalized characters with one block omitted.
\begin{lem}
\label{lem:weighted avg}Let $\sigma\in S_{n}$ be a permutation satisfying
$\sigma\left(n\right)=n$. Then $\sigma$ can be considered also as
an element of $S_{n-1}$, and for every $\rho\in\widehat{S_{n}}$,
the normalized character $\x_{\rho}\left(\sigma\right)$ is a weighted
average of the normalized characters $\left\{ \x_{\rho'}\left(\sigma\right)\right\} _{\rho'=\rho-\square\in\widehat{S_{n-1}}}$.
\end{lem}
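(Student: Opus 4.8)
The plan is to deduce the statement directly from the branching rule for the symmetric groups. Recall that restricting an irrep $\rho\in\widehat{S_{n}}$ to the point-stabilizer $S_{n-1}\le S_{n}$ (the copy of $S_{n-1}$ fixing $n$) decomposes multiplicity-free as
\[
\mathrm{Res}^{S_{n}}_{S_{n-1}}\rho\;\cong\;\bigoplus_{\rho'=\rho-\square}\rho',
\]
the sum ranging over all $\rho'\in\widehat{S_{n-1}}$ whose Young diagram is obtained from that of $\rho$ by deleting a single corner box. First I would take characters of both sides and evaluate at $\sigma$, which by hypothesis lies in this copy of $S_{n-1}$; this yields $\chi_{\rho}(\sigma)=\sum_{\rho'=\rho-\square}\chi_{\rho'}(\sigma)$.

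Next I would divide through by $\chi_{\rho}(1)=\dim\rho$ and insert the factor $\dim\rho'$ in the numerator and denominator of each summand:
\[
\x_{\rho}(\sigma)=\frac{\chi_{\rho}(\sigma)}{\dim\rho}=\sum_{\rho'=\rho-\square}\frac{\dim\rho'}{\dim\rho}\cdot\frac{\chi_{\rho'}(\sigma)}{\dim\rho'}=\sum_{\rho'=\rho-\square}\frac{\dim\rho'}{\dim\rho}\,\x_{\rho'}(\sigma).
\]
It then remains to verify that the coefficients $w_{\rho'}:=\dim\rho'/\dim\rho$ form a probability vector. They are manifestly non-negative, and evaluating the branching-rule decomposition at the identity (equivalently, comparing dimensions) gives $\dim\rho=\sum_{\rho'=\rho-\square}\dim\rho'$, i.e. $\sum_{\rho'}w_{\rho'}=1$. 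Hence $\x_{\rho}(\sigma)$ is a genuine convex combination — a weighted average — of the values $\x_{\rho'}(\sigma)$, which is exactly the claim.

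I do not expect a real obstacle here: the lemma is a formal consequence of the branching rule together with the hypothesis that $\sigma$ fixes $n$, and the only point requiring a moment's care is that the weights are the \emph{normalized} dimensions $\dim\rho'/\dim\rho$ and that they sum to one — which is again the branching rule, now applied at the identity element. If one wished to avoid citing the branching rule as a black box, the same decomposition of $\mathrm{Res}^{S_{n}}_{S_{n-1}}\rho$ can be read off from the combinatorics of standard Young tableaux (restriction removes a corner box), but for our purposes the branching rule in the stated form is all that is needed.
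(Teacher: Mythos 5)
Your proof is correct and follows essentially the same route as the paper: both apply the branching rule to get $\chi_{\rho}(\sigma)=\sum_{\rho'=\rho-\square}\chi_{\rho'}(\sigma)$ and $\dim\rho=\sum_{\rho'=\rho-\square}\dim\rho'$, and then observe that $\x_{\rho}(\sigma)$ is a convex combination of the $\x_{\rho'}(\sigma)$ with weights $\dim\rho'/\dim\rho$. The only cosmetic difference is that the paper phrases the final step as the general fact that $\frac{x_{1}+\cdots+x_{k}}{y_{1}+\cdots+y_{k}}$ is a convex combination of the ratios $\frac{x_{i}}{y_{i}}$, whereas you write the weights out explicitly.
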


Here $\rho'$ runs over all Young diagrams obtained from $\rho$ by
deletion of a single block.
\begin{proof}
By the branching rule $\chi_{\rho}\left(1\right)=\sum_{\rho'=\rho-\square}\chi_{\rho'}\left(1\right)$,
and, similarly $\chi_{\rho}\left(\sigma\right)=\sum_{\rho'=\rho-\square}\chi_{\rho'}\left(\sigma\right)$.
For any real numbers $x_{1},\ldots,x_{k}$ and positive real numbers
$y_{1},\ldots,y_{k}$, $\frac{x_{1}+\ldots+x_{k}}{y_{1}+\ldots+y_{k}}$
is a convex combination of $\frac{x_{1}}{y_{1}},\ldots,\frac{x_{k}}{y_{k}}$.
\end{proof}
\begin{cor}
\label{cor:std rules in n-1 =00003D=00003D> rules in n}For $\sigma\in S_{n}$
with $\sigma(n)=n$, if $\std$ rules for $\sigma$ in $S_{n-1}$,
it also rules for $\sigma$ in $S_{n}$.
\end{cor}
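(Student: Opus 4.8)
The plan is to bootstrap from $S_{n-1}$ to $S_{n}$ via the weighted-average property of Lemma~\ref{lem:weighted avg}, after isolating the handful of irreps of $S_{n}$ to which the hypothesis does not apply verbatim. Recall that $\std=(n-1,1)$, with transpose $\std^{t}=(2,1^{n-2})$, and that the hypothesis ``$\std$ rules for $\sigma$ in $S_{n-1}$'' says exactly that $\x_{\rho'}(\sigma)\le\x_{(n-2,1)}(\sigma)$ for every $\rho'\in\widehat{S_{n-1}}\setminus\{\triv,\sgn\}$.

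First I would pin down the exceptional irreps. If some box-removal $\rho-\square$ equals the trivial diagram $(n-1)$, then $\rho$ is obtained from a single row by appending one block, so $\rho\in\{(n),(n-1,1)\}$; dually, if some $\rho-\square$ equals the sign diagram $(1^{n-1})$, then $\rho\in\{(1^{n}),(2,1^{n-2})\}$. Consequently, for every $\rho\in\widehat{S_{n}}\setminus\{\triv,\sgn,\std,\std^{t}\}$ all of its box-removals lie in $\widehat{S_{n-1}}\setminus\{\triv,\sgn\}$; applying the hypothesis to each of them, and then Lemma~\ref{lem:weighted avg} (which writes $\x_{\rho}(\sigma)$ as a weighted average of the $\x_{\rho'}(\sigma)$), gives $\x_{\rho}(\sigma)\le\x_{(n-2,1)}(\sigma)$.

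Next I would establish the monotonicity $\x_{(n-2,1)}(\sigma)\le\x_{\std}(\sigma)$. Again this drops out of Lemma~\ref{lem:weighted avg}: the two box-removals of $(n-1,1)$ are $(n-1)$ and $(n-2,1)$, so $\x_{\std}(\sigma)$ is a convex combination of $\x_{(n-1)}(\sigma)=1$ and $\x_{(n-2,1)}(\sigma)$, and a convex combination of $1$ with a real number $r\le1$ is always $\ge r$. (Alternatively, by Table~\ref{tab:Dimensions-and-characters}, $\x_{\std}(\sigma)=\frac{c_{1}(\sigma)-1}{n-1}$ with fixed points counted in $S_{n}$, while $\x_{(n-2,1)}(\sigma)=\frac{c_{1}(\sigma)-2}{n-2}$, and $\frac{c_{1}-2}{n-2}\le\frac{c_{1}-1}{n-1}$ because $c_{1}\le n$.) Chaining the two inequalities, $\x_{\rho}(\sigma)\le\x_{\std}(\sigma)$ for every $\rho\in\widehat{S_{n}}\setminus\{\triv,\sgn,\std,\std^{t}\}$.

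Finally I would dispatch the two leftover irreps. For $\rho=\std$ there is nothing to check. For $\rho=\std^{t}=\std\otimes\sgn$ we have $\x_{\std^{t}}(\sigma)=\sgn(\sigma)\cdot\x_{\std}(\sigma)$, and since $\sigma$ fixes $n$ it has at least one fixed point, so $\x_{\std}(\sigma)=\frac{c_{1}(\sigma)-1}{n-1}\ge0$ and hence $\x_{\std^{t}}(\sigma)\le\x_{\std}(\sigma)$. Therefore $\x_{\std}(\sigma)\ge\x_{\rho}(\sigma)$ for every $\rho\in\widehat{S_{n}}\setminus\{\triv,\sgn\}$, which is precisely the assertion that $\std$ rules for $\sigma$ in $S_{n}$. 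I do not anticipate any real difficulty here; the one step that needs a little care is the exceptional-irrep bookkeeping in the second paragraph — verifying that $\{\triv,\sgn,\std,\std^{t}\}$ is exactly the set of irreps of $S_{n}$ admitting a box-removal equal to $(n-1)$ or $(1^{n-1})$, and that these four are pairwise distinct (which holds for every $n\ge4$, hence throughout the range of interest) — everything else is a one-line manipulation.
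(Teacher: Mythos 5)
Your proof is correct and follows essentially the same route as the paper: bound all $\rho\notin\{\triv,\sgn,\std,\std^{t}\}$ by $\x_{(n-2,1)}(\sigma)$ via Lemma \ref{lem:weighted avg}, compare $\x_{(n-2,1)}(\sigma)\le\x_{\std}(\sigma)$, and treat $\std^{t}$ separately. Your treatment of $\std^{t}$ is in fact marginally cleaner than the paper's, since you only use $c_{1}(\sigma)\ge1$ (from $\sigma(n)=n$) to get $\x_{\std}(\sigma)\ge0$, whereas the paper derives $c_{1}(\sigma)\ge2$ from the hypothesis that $\std$ rules in $S_{n-1}$.
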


\begin{proof}
Denote the standard irrep in $S_{n-1}$ by $\std'$. Then 
\[
\x_{\std}\left(\sigma\right)=\frac{c_{1}\left(\sigma\right)-1}{n-1}\ge\frac{c_{1}\left(\sigma\right)-2}{n-2}=\x_{\std'}\left(\sigma\right).
\]
On the other hand, if $\rho\in\widehat{S_{n}}\setminus\left\{ \triv,\sgn,\std,\std^{t}\right\} $,
then removing a block from $\rho$ does not yield neither the trivial
nor the sign irreps of $S_{n-1}$, and so by Lemma \ref{lem:weighted avg},
\[
\x_{\rho}\left(\sigma\right)\le\x_{\std'}\left(\sigma\right).
\]
Finally, regarding $\std^{t}$, if $\sigma$ is even then $\x_{\std^{t}}\left(\sigma\right)=\x_{\std}\left(\sigma\right)$,
and if $\sigma$ is odd then as $\std$ rules for $\sigma$ in $S_{n-1}$,
we have $c_{1}\left(\sigma\right)\ge2$ and $\x_{\std^{t}}\left(\sigma\right)=-\x_{\std}\left(\sigma\right)<\x_{\std}\left(\sigma\right)$.
\end{proof}
Recall that the \emph{support}\marginpar{$\protect\supp$} of $\sigma\in S_{n}$
is $\supp\left(\sigma\right)=\left\{ i\in\left\{ 1,\ldots,n\right\} \,\middle|\,\sigma\left(i\right)\ne i\right\} .$
\begin{cor}
\label{cor:c1>=00003D2 and large support}Let $N_{3}$ be as in Lemma
\ref{lem:2 fixed points}. Then $\std$ rules for every $n\ge N_{3}$
and $\sigma\in S_{n}$ with $c_{1}\left(\sigma\right)\ge2$ and $\left|\supp\left(\sigma\right)\right|\ge N_{3}-2$.
\end{cor}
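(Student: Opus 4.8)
The plan is to bootstrap from the base case settled in Lemma~\ref{lem:2 fixed points} (exactly two fixed points) using the monotonicity encoded in Corollary~\ref{cor:std rules in n-1 =00003D=00003D> rules in n}. First I would set $f=c_{1}\left(\sigma\right)$ and $m=\left|\supp\left(\sigma\right)\right|+2=n-f+2$. The hypotheses give exactly the two inequalities we need: since $f\ge2$ we have $m\le n$, and since $\left|\supp\left(\sigma\right)\right|\ge N_{3}-2$ we have $m\ge N_{3}$. Thus $N_{3}\le m\le n$.

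Next, label the points of $\left[n\right]$ so that $\supp\left(\sigma\right)=\left\{ 1,\ldots,n-f\right\} $ and the $f$ fixed points of $\sigma$ are $\left\{ n-f+1,\ldots,n\right\} $. Then $\sigma$ restricts to a permutation of $\left\{ 1,\ldots,m\right\} $ fixing precisely the two points $n-f+1$ and $m=n-f+2$, i.e.\ $\sigma$, viewed in $S_{m}$, satisfies $c_{1}\left(\sigma\right)=2$. Since $m\ge N_{3}$, Lemma~\ref{lem:2 fixed points} applies and $\std$ rules for $\sigma$ in $S_{m}$.

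Finally I would climb back up from $S_{m}$ to $S_{n}$ one index at a time. For each $k$ with $m\le k<n$, the point $k+1$ is a fixed point of $\sigma$ (in $S_{k+1}$ it plays the role of the index ``$n$'' in Corollary~\ref{cor:std rules in n-1 =00003D=00003D> rules in n}), so that corollary carries the statement ``$\std$ rules for $\sigma$'' from $S_{k}$ to $S_{k+1}$. After $n-m=f-2\ge0$ such steps we conclude that $\std$ rules for $\sigma$ in $S_{n}$, as desired.

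There is no serious obstacle here: the argument is purely a packaging of Lemma~\ref{lem:2 fixed points} with Corollary~\ref{cor:std rules in n-1 =00003D=00003D> rules in n}. The only points requiring care are verifying the chain $N_{3}\le m\le n$ from the two hypotheses $c_{1}\left(\sigma\right)\ge2$ and $\left|\supp\left(\sigma\right)\right|\ge N_{3}-2$, and choosing the labelling of the fixed points so that at every stage of the induction the newly added index is the maximal one, which is what Corollary~\ref{cor:std rules in n-1 =00003D=00003D> rules in n} requires. (Note also that the property ``$\std$ rules for $\sigma$ in $S_{m}$'' is invariant under relabelling the ground set, so identifying $\supp\left(\sigma\right)\cup\left\{ \text{two fixed points}\right\} $ with $\left\{ 1,\ldots,m\right\} $ is harmless.)
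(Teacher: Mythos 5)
Your argument is correct and is essentially identical to the paper's proof: both restrict $\sigma$ to its support plus two fixed points to land in $S_{m}$ (the paper's $S_{k}$) with exactly two fixed points, invoke Lemma \ref{lem:2 fixed points} there, and then apply Corollary \ref{cor:std rules in n-1 =00003D=00003D> rules in n} repeatedly to climb back to $S_{n}$. The extra bookkeeping you do (the chain $N_{3}\le m\le n$ and the relabelling of fixed points) just makes explicit what the paper leaves implicit.
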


\begin{proof}
Let $k=\left|\supp\left(\sigma\right)\right|+2\ge N_{3}$. By omitting
$n-k$ fixed points from $\sigma$, we may think of $\sigma$ as representing
a conjugacy class in $S_{k}$ with exactly two fixed points. By Lemma
\ref{lem:2 fixed points}, $\std$ rules for $\sigma$ in $S_{k}$.
By applying Corollary \ref{cor:std rules in n-1 =00003D=00003D> rules in n}
$n-k$ times, we deduce that $\std$ also rules for $\sigma$ in $S_{n}$.
\end{proof}
\begin{lem}
\label{lem:std rules eventually for every sigma}Let $\sigma\in S_{r}$
and consider it as a permutation in $S_{n}$ for any $n\ge r$ by
appending $n-r$ fixed points. For large enough $n$, $\std$ rules
for $\sigma$ in $S_{n}$.
\end{lem}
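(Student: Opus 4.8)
The plan is to proceed as follows. We may assume $\sigma\neq\mathrm{id}$ and, after conjugating, that $\supp\sigma=\{1,\dots,s\}$ with $s:=|\supp\sigma|\geq 2$. Then in $S_n$ we have $c_1(\sigma)=n-s$, so $\x_{\std}(\sigma)=\tfrac{n-s-1}{n-1}$, and since $\x_{\std^t}(\sigma)=\sgn(\sigma)\,\x_{\std}(\sigma)\leq\x_{\std}(\sigma)$ it suffices to show that for all large $n$, every $\rho\in\widehat{S_n}\setminus\{\triv,\sgn,\std,\std^t\}$ satisfies $\x_\rho(\sigma)\leq\tfrac{n-s-1}{n-1}$. (When $s\geq N_3-2$ this already follows from Corollary~\ref{cor:c1>=00003D2 and large support}, but we do not need that.) Fix a constant $B=B(\sigma)$ to be specified below, and split the reps $\rho$ into those that have fewer than $B$ boxes outside a first row or a first column, and all the rest.

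\textbf{Reps in a family with few boxes outside a row or column.} Suppose $\rho$ lies in a family of irreps (Definition~\ref{def:family of irreps}) with $j$ boxes outside the first row. By Lemma~\ref{lem:poly in c_1,c_2,..} and Table~\ref{tab:Dimensions-and-characters}, $\dim\rho$ and $\chi_\rho(\sigma)$ are obtained by substituting $c_1=n-s$ and $c_i=c_i(\sigma)$ ($i\geq2$, a constant) into the associated universal polynomial, which has degree $j$ and only monomials $c_1^{d_1}c_2^{d_2}\cdots$ with $d_1+2d_2+\cdots\leq j$; hence both are polynomials in $n$ of degree $j$ with the same leading term, and a short computation gives $1-\x_\rho(\sigma)=\tfrac{js}{n}\bigl(1+o_n(1)\bigr)$. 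For large $n$ this exceeds $\tfrac{s}{n-1}=1-\x_{\std}(\sigma)$ as soon as $j\geq2$ (the case $j=1$ being $\std$, with equality), and transpose families are handled the same way using $\x_{\rho^t}(\sigma)=\sgn(\sigma)\x_\rho(\sigma)$ (when $\sigma$ is odd, $1-\x_{\rho^t}(\sigma)=1+\x_\rho(\sigma)\to2$). Since only finitely many families have at most $B$ boxes outside the first row, or at most $B$ outside the first column, this disposes --- with one threshold $n\geq N(\sigma,B)$ --- of every $\rho\notin\{\triv,\sgn,\std,\std^t\}$ having fewer than $B$ boxes outside the first row or fewer than $B$ outside the first column.

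\textbf{The remaining reps, via iterated branching.} As $\sigma$ fixes $s+1,\dots,n$, applying Lemma~\ref{lem:weighted avg} successively ($n-s-1$ times) writes $\x_\rho(\sigma)=\sum_{\nu\vdash s+1}w^\rho_\nu\,\x_\nu(\sigma)$, a convex combination in which $w^\rho_\nu\geq0$ is the probability that the entries $1,\dots,s+1$ of a uniformly random standard Young tableau of $\rho$ form the subdiagram $\nu$ (explicitly $w^\rho_\nu=\tfrac{f^{\rho/\nu}\dim\rho^{\nu}}{\dim\rho}$, with $f^{\rho/\nu}$ the number of standard skew tableaux of shape $\rho/\nu$). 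Isolating the one-dimensional characters $\x_{\triv_{s+1}}(\sigma)=1$ and $\x_{\sgn_{s+1}}(\sigma)=\sgn(\sigma)$, and bounding every other $\x_\nu(\sigma)$ by $M_\sigma:=\max\{\x_\nu(\sigma):\nu\vdash s+1,\ \nu\neq\triv_{s+1},\sgn_{s+1}\}$ --- which is $<1$ since $\sigma\neq\mathrm{id}$ has a fixed point in $S_{s+1}$ and so lies in no $\ker\rho^\nu$ for such $\nu$ --- one obtains
\[
1-\x_\rho(\sigma)\ \geq\ (1-M_\sigma)\bigl(1-w^\rho_{\triv_{s+1}}-w^\rho_{\sgn_{s+1}}\bigr),
\]
where $w^\rho_{\triv_{s+1}}=f^{\rho/(s+1)}/\dim\rho$ is the probability that $1,\dots,s+1$ all land in the first row of a random standard tableau of $\rho$, and $w^\rho_{\sgn_{s+1}}$ is the analogous quantity for the first column.

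\textbf{The weight estimate --- the main obstacle.} It remains to show that if $\rho$ has at least $B$ boxes outside the first row and at least $B$ outside the first column, then $1-w^\rho_{\triv_{s+1}}-w^\rho_{\sgn_{s+1}}\geq\tfrac{Bs}{n}\bigl(1-o_n(1)\bigr)$; choosing $B$ with $B(1-M_\sigma)>1$ then yields $1-\x_\rho(\sigma)>\tfrac{s}{n-1}$ for large $n$, which together with the previous step completes the proof. I expect this estimate to be the crux. When $\rho$ is balanced --- the first row and first column each have at most $(1-\varepsilon)n$ boxes --- both probabilities above are bounded away from $1$ by a constant depending only on $\varepsilon,s$, so there is nothing to prove. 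The delicate range is $\rho=(n-j,\mu)$ (or its transpose) with $B\leq j\leq\varepsilon n$: here one must bound the probability that $1,\dots,s+1$ all lie in the first row of a random standard tableau of $(n-j,\mu)$ by $1-\tfrac{js}{n}(1-o_n(1))$, uniformly in $j$ and $\mu$. This I would get by running the branching one box at a time --- conditioned on $1,\dots,i-1$ lying in the first row, the chance that $i$ instead goes to the corner $(2,1)$ is $\sim j/n$, so the probability in question is at most $\bigl(1-\tfrac{j}{n}(1-o_n(1))\bigr)^s$ --- the work being to make the $\sim j/n$ estimate and the resulting product bound uniform over all admissible $(j,\mu)$, especially as $j$ grows with $n$.
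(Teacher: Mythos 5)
Your outline is workable in principle, but it contains a genuine gap exactly where you flag it: the uniform weight estimate $1-w^{\rho}_{\triv_{s+1}}-w^{\rho}_{\sgn_{s+1}}\ge\frac{Bs}{n}\bigl(1-o_n(1)\bigr)$, for all $\rho\vdash n$ with at least $B$ boxes outside the first row and at least $B$ outside the first column, is never proved, and it is the crux on which everything else rests. The heuristic you offer (``conditioned on $1,\dots,i-1$ lying in the first row, the chance that $i$ goes to $(2,1)$ is $\sim j/n$'') is precisely the statement requiring proof: these conditional probabilities are ratios of skew tableau numbers such as $f^{\rho/(i,1)}/f^{\rho/(i)}$, and controlling them uniformly over all $\mu\vdash j$ and all $B\le j\le\varepsilon n$ (with $j$ allowed to grow with $n$) is real work, comparable to the character estimates the lemma is meant to encapsulate. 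The ``balanced'' case is likewise asserted without proof: that the probability that $1,\dots,s+1$ all land in the first row is bounded away from $1$ uniformly over all shapes whose first row and first column each have at most $(1-\varepsilon)n$ boxes is plausible but not obvious. (The parts you do carry out are fine: the asymptotics $1-\x_{\rho_n}(\sigma)=\frac{js}{n}(1+o(1))$ for each fixed family, and the reduction via iterated branching and $M_\sigma<1$ --- though your justification ``$\sigma$ has a fixed point, hence lies in no kernel'' really rests on knowing the normal subgroups of $S_{s+1}$, the only non-trivial case being $V_4\le S_4$, whose non-identity elements are fixed-point-free.)

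The paper's proof shows that no quantitative uniform estimate is needed, because the statement is only ``eventually, for this fixed $\sigma$.'' Take $r\ge5$ so every $\rho\in\widehat{S_r}\setminus\{\triv,\sgn\}$ is faithful, and let $B<1$ be the largest normalized character of $\sigma$ over these irreps of $S_r$. Then induct on $n$: as long as $\std$ does not rule, every $\x_\rho(\sigma)$ with $\rho\in\widehat{S_n}\setminus\{\triv,\sgn\}$ remains $\le B$ --- for $\rho\notin\{\std,\std^t\}$ because by Lemma \ref{lem:weighted avg} it is a weighted average of values $\x_{\rho'}(\sigma)$, $\rho'\in\widehat{S_{n-1}}\setminus\{\triv,\sgn\}$, all $\le B$ by induction, and for $\std,\std^t$ by the very assumption that $\std$ does not rule. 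Since $\x_{\std}(\sigma)=\frac{n-\left|\supp(\sigma)\right|-1}{n-1}\to1>B$, the representation $\std$ must rule from some $n_0$ on, and Corollary \ref{cor:std rules in n-1 =00003D=00003D> rules in n} propagates this to all $n\ge n_0$. In other words, the branching lemma you invoke already suffices when used as a soft monotonicity statement, with no need to estimate the branching weights; your route could likely be completed, but only by supplying exactly the uniform tableau estimates you leave open.
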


\begin{proof}
By enlarging $r$ a bit first, we can assume without loss of generality
that $r\ge5$, so every $\rho\in\widehat{S_{r}}\setminus\left\{ \triv,\sgn\right\} $
is faithful, and also that $c_{1}\left(\sigma\right)\ge1$ in $S_{r}$,
so if $\std^{t}$ rules for some $n\ge r$, so does $\std$. Assume
that some $\rho\in\widehat{S_{r}}\setminus\left\{ \triv,\sgn\right\} $
rules for $\sigma$ in $S_{r}$. As $\rho$ is faithful, $\x_{\rho}\left(\sigma\right)<1$.
As $n$ increases, as long as $\std$ does not rule, all normalized
characters of irreps in $\widehat{S_{n}}\setminus\left\{ \triv,\sgn\right\} $
are bounded from above by $\x_{\rho}\left(\sigma\right)$: indeed,
by induction this is true for all such irreps in $\widehat{S_{n-1}}$,
and by Lemma \ref{lem:weighted avg} this is also true for every irrep
in $\widehat{S_{n}}\setminus\left\{ \triv,\sgn,\std,\std^{t}\right\} $.
For $\std$ and $\std^{t}$ this is true by the assumption that $\std$
does not rule. In contrast, the normalized character of $\std$, which
is $\frac{n-\left|\supp\left(\sigma\right)\right|-1}{n-1}$, tends
to $1$ as $n\to\infty$. Therefore, for some $n_{0}$, $\std$ rules.
Corollary \ref{cor:std rules in n-1 =00003D=00003D> rules in n} then
implies that $\std$ rules for $\sigma$ for every $n\ge n_{0}$.
\end{proof}
\begin{cor}
\label{cor:std rules for all sigma with small support}Let $M\in\mathbb{N}$
be a constant. There is some $N_{4}=N_{4}\left(M\right)\in\mathbb{N}$
such that for every $n\ge N_{4}$ and every $\sigma\in S_{n}$ with
$\left|\supp\left(\sigma\right)\right|\le M$, $\std$ rules for $\sigma$
in $S_{n}$.
\end{cor}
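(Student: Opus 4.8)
The plan is to obtain this uniform bound from Lemma \ref{lem:std rules eventually for every sigma} by a compactness-style argument. The key observation is that whether $\std$ rules for $\sigma$ in $S_n$ --- i.e.\ whether the maximum in (\ref{eq:ten irreps rule}) over $\rho\in\widehat{S_n}\setminus\{\triv,\sgn\}$ is attained at $\std$ --- depends only on the conjugacy class of $\sigma$, since each $\x_\rho$ is a class function. Moreover, permutations of $S_n$ with support of size at most $M$ fall into only finitely many conjugacy classes, a number that does not grow with $n$.

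Concretely, I would argue as follows. Fix the standard embedding $S_M\hookrightarrow S_n$ (for $n\ge M$) as the permutations fixing $M+1,\dots,n$, and let $\tau_1,\dots,\tau_p\in S_M$ be representatives of the conjugacy classes of $S_M$. Every $\sigma\in S_n$ with $\left|\supp(\sigma)\right|\le M$ has support contained in some $M$-element subset of $\{1,\dots,n\}$, hence is conjugate in $S_n$ to the image of one of the $\tau_i$. For each $i$, apply Lemma \ref{lem:std rules eventually for every sigma} to $\tau_i$: there is $n_0(\tau_i)\in\mathbb{N}$ such that $\std$ rules for $\tau_i$ in $S_n$ for every $n\ge n_0(\tau_i)$. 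Since there are only finitely many $\tau_i$, we may set $N_4(M):=\max\bigl(M,\,n_0(\tau_1),\dots,n_0(\tau_p)\bigr)$.

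Now take $n\ge N_4(M)$ and $\sigma\in S_n$ with $\left|\supp(\sigma)\right|\le M$. Pick $i$ with $\sigma$ conjugate to (the image of) $\tau_i$ in $S_n$; then $\x_\rho(\sigma)=\x_\rho(\tau_i)$ for every $\rho\in\widehat{S_n}$, and since $n\ge n_0(\tau_i)$ the maximum of $\x_\rho(\tau_i)$ over $\rho\in\widehat{S_n}\setminus\{\triv,\sgn\}$ is attained at $\std$; hence so is the maximum of $\x_\rho(\sigma)$, i.e.\ $\std$ rules for $\sigma$ in $S_n$. There is no real obstacle here: the entire substance is already contained in Lemma \ref{lem:std rules eventually for every sigma}, and this corollary merely upgrades its ``for large enough $n$'' --- which a priori depends on $\sigma$ --- to a threshold depending on $M$ alone, which is legitimate precisely because the dependence is really on the conjugacy class of $\sigma$ and there are boundedly many such classes.
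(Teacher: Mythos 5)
Your argument is correct and is essentially the paper's own proof: both reduce the statement to finitely many conjugacy-class ``starting points'' (the paper uses fixed-point-free classes in $S_k$ for $k\le M$, you use class representatives in $S_M$, a purely cosmetic difference) and then apply Lemma \ref{lem:std rules eventually for every sigma} to each, taking the maximum of the finitely many thresholds.
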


\begin{proof}
Every such $\sigma$ belongs to a conjugacy class in $S_{n}$ which
is obtained from blowing up (by appending fixed point) a fixed-point-free
conjugacy class in $S_{k}$ for some $k\le M$. This is a finite set
of starting points, which means we need to apply Lemma \ref{lem:std rules eventually for every sigma}
finitely many times.
\end{proof}
\begin{proof}[Proof of Proposition \ref{prop:c1 ge 2}]
Let $N_{3}$ be the constant from Lemma \ref{lem:2 fixed points}
and Corollary \ref{cor:c1>=00003D2 and large support}, and $N_{4}=N_{4}\left(M\right)$
with $M=N_{3}-3$ be the constant from Corollary \ref{cor:std rules for all sigma with small support}.
Set $N_{1}=\max\left(N_{3},N_{4}\right)$. Then $\std$ rules for
every $n\ge N_{1}$ and for every $\sigma\in S_{n}$ with $c_{1}\left(\sigma\right)\ge2$.
\end{proof}

\subsection{The case $\mathbf{c_{1}\left(\sigma\right)\le1}$}

The proof strategy of Proposition \ref{prop:c1=00003D0 or c1=00003D1}
is the same as in the case $c_{1}\left(\sigma\right)=2$, albeit significantly
more tedious. The difference is that the largest normalized character
is not necessarily of order $\frac{1}{n}$ as in the $c_{1}\left(\sigma\right)=2$
case, but can be of order as low as $\frac{1}{n^{3}}$. For example,
this is the case when $\sigma$ is even, $c_{1}\left(\sigma\right)=c_{3}\left(\sigma\right)=1$
and $c_{2}\left(\sigma\right)=0$ (and see Table \ref{tab:ruling irrep for even perms}).

\begin{lem}
\label{lem:monomials in the polynomial for characters}Let $\left\{ \rho_{n}\right\} _{n\ge n_{0}}$
be a family of irreps $\rho_{n}\in\widehat{S_{n}}$ as in Definition
\ref{def:family of irreps} with $k$ blocks outside the first row
or outside the first column. Every monomial $c_{1}^{\alpha_{1}}c_{2}^{\alpha_{2}}\cdots c_{k}^{\alpha_{k}}$
in the associated polynomial $p$ from Lemma \ref{lem:poly in c_1,c_2,..}
satisfies $\sum_{i=1}^{k}i\cdot\alpha_{i}\le k$. In addition, the
polynomial $p\left(c_{1},0,\ldots,0\right)$ giving the dimension
of $\rho_{n}$ is of degree exactly $k$.
\end{lem}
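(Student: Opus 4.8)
The plan is to leverage the combinatorial description of $\chi_{\rho_n^\lambda}$ as an integer combination of the permutation characters $\chi_{M^\mu}$ with $\mu \triangleleft \lambda$, exactly as used in the proof of Lemma \ref{lem:poly in c_1,\ldots,c_k}. So the first step is to understand the monomials occurring in $\chi_{M^{\mu_n}}\left(\sigma\right)$ for a single family $\{M^{\mu_n}\}$ with $k$ blocks of $\mu_n$ outside the first row, say $\mu_n = (n-k, \mu^{(2)}, \ldots, \mu^{(\ell)})$ with $\mu^{(2)} + \cdots + \mu^{(\ell)} = k$. The character value $\chi_{M^{\mu_n}}(\sigma)$ counts ordered set-partitions of $[n]$ of shape $\mu_n$ fixed by $\sigma$; equivalently, $\sigma$ permutes the parts, but since all parts except the huge first one have size $\le k < n/2$, the first part must be fixed setwise and each of the remaining parts must be a union of cycles of $\sigma$. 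Counting the number of ways to fill the small parts $\mu^{(2)}, \ldots, \mu^{(\ell)}$ using cycles of $\sigma$ gives a polynomial in $c_1, \ldots, c_k$ in which every monomial $c_1^{\alpha_1}\cdots c_k^{\alpha_k}$ satisfies $\sum_i i\,\alpha_i = \mu^{(2)} + \cdots + \mu^{(\ell)} = k$ (the total number of boxes used), while filling the first part contributes nothing new. Actually one must be slightly careful: a cycle of $\sigma$ of length $i$ used inside a small part consumes $i$ boxes, and the leftover boxes go to the first part; hence the bound $\sum_i i\,\alpha_i \le k$, with equality when no further "room" is used. This establishes the weight bound $\sum_i i\,\alpha_i \le k$ for each $M^{\mu_n}$ with $\le k$ boxes outside the first row.

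Next I would pass from the $M^{\mu}$'s to the irreducible $\rho_n^\lambda$. By the recollection in the proof of Lemma \ref{lem:poly in c_1,\ldots,c_k}, for $\lambda_n = (n-k, \lambda^{(2)}, \ldots)$ with $n - k \ge n/2$ we have $\chi_{\rho_n^{\lambda}} = \sum_{\mu \triangleleft \lambda} a_\mu\, \chi_{M^{\mu_n}}$ with $n$-independent integer coefficients $a_\mu$, and every such $\mu$ has at most $k$ boxes outside its first row. Since each summand is a polynomial in $c_1,\ldots,c_k$ all of whose monomials have weight $\le k$, so is the linear combination $p = \sum_\mu a_\mu p_\mu$; this proves the first assertion for row-type families. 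For the transpose (column-type) families, Lemma \ref{lem:poly in c_1,\ldots,c_k} gives $\chi_{\rho_n^t}(\sigma) = \sgn(\sigma)\,p(c_1,\ldots,c_k)$, and multiplying by $\sgn(\sigma) = (-1)^{\sum_{i \text{ even}} c_i + (\text{odd-part count})}$ — more precisely $\sgn(\sigma) = \prod_i (-1)^{(i-1)c_i}$ — does not change which $c_i$ appear; formally one checks that $\sgn$ itself, as a function on $S_n$, is not polynomial in the $c_i$, so the correct statement is simply that the polynomial $p$ associated to the transpose family (in the sense of the formula $\chi_{\rho_n^t} = \sgn \cdot p$) is the same $p$, hence obeys the same weight bound. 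So the monomial bound transfers verbatim.

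Finally, for the degree statement: $p(c_1, 0, \ldots, 0) = \dim \rho_n = p(n, 0, \ldots, 0)$ as a polynomial in $n$. Its degree is at most $k$ by the weight bound just proved (setting $\alpha_2 = \cdots = \alpha_k = 0$ forces $\alpha_1 \le k$). To see the degree is exactly $k$, use the hook length formula: for $\lambda_n = (n-k, \lambda^{(2)}, \ldots, \lambda^{(\ell)})$, the dimension $\dim \rho_n^{\lambda} = n! / \prod(\text{hooks})$, and the hooks in the first row that involve the long arm are $n-k, n-k-1, \ldots$ down to roughly $\ell$ or so (there are exactly $n-k$ cells in the first row, contributing hook lengths that, together with the $n!$, leave a net polynomial of degree $n - (n-k) = k$ in $n$) — concretely, $\dim \rho_n^\lambda = \binom{n}{\lambda^{(2)} + \cdots + \lambda^{(\ell)}} \cdot (\text{positive constant}) + O(n^{k-1}) = \frac{n^k}{k!}\cdot c + \cdots$ with $c > 0$. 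So the leading term does not vanish and $\deg_n p(n,0,\ldots,0) = k$. The main obstacle I anticipate is book-keeping the combinatorial count for $\chi_{M^\mu}$ cleanly — in particular arguing rigorously that the large first part is always fixed setwise and that its contribution never raises the weight — but this is exactly the kind of argument already sketched for the example $M^{(n-3,2,1)}$ in the proof of Lemma \ref{lem:poly in c_1,\ldots,c_k}, so it should go through with modest effort; everything downstream (linearity, transpose twist, hook-length leading term) is routine.
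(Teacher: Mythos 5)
Your proposal is correct and follows essentially the same route as the paper: verify the weight bound $\sum_i i\alpha_i\le k$ directly for the permutation characters $\chi_{M^{\mu}}$ by counting set partitions fixed by $\sigma$, transfer it to $\chi_{\rho_n}$ via the $n$-independent integer combination over $\mu\triangleleft\lambda$ (and to the transpose family via the $\sgn$ twist, which leaves $p$ unchanged), and get the exact degree $k$ of $p(c_1,0,\ldots,0)$ from the hook length formula. One cosmetic quibble: the monomials of weight strictly less than $k$ come from expanding the binomial-coefficient counts (e.g.\ $c_1\binom{c_1-1}{2}$), not from ``leftover boxes going to the first part,'' but this does not affect the argument.
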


\begin{proof}
The first statement holds because it holds for the polynomials depicting
the characters of the reducible representations $M^{\lambda}$ (see
the proof of Lemma \ref{lem:poly in c_1,c_2,..}), and the character
of $\rho_{n}$ is equal to a linear combination of $M^{\lambda}$'s
with at most $k$ blocks outside the first row (namely, $\lambda^{\left(1\right)}\ge n-k$).
The second statement is immediate from the hook length formula.
\end{proof}
\begin{lem}
\label{lem:ten at least 2/n^3}For every large enough $n$ and every
$\sigma\in S_{n}$
\[
f\left(\sigma\right)\stackrel{\mathrm{def}}{=}\max_{\rho\in\T_{n}}\x_{\rho}\left(\sigma\right)\ge\frac{3}{n\left(n-2\right)\left(n-4\right)}.
\]
\end{lem}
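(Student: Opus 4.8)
The plan is a direct case analysis on the cycle type of $\sigma$, using the explicit polynomials for $\chi_{\rho}(\sigma)$ from Table~\ref{tab:Dimensions-and-characters} together with the identity $\x_{\rho^{t}}(\sigma)=\sgn(\sigma)\cdot\x_{\rho}(\sigma)$. It helps to record at the outset that $\frac{3}{n(n-2)(n-4)}=\frac{1}{\dim(n-3,2,1)}$, and that every comparison below between a candidate normalized character and $\frac{3}{n(n-2)(n-4)}$ reduces to an elementary polynomial inequality in $n$ that holds once $n$ exceeds a small absolute constant, so the hypothesis ``large enough $n$'' is harmless.

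First I would dispose of the case $c_{1}(\sigma)\ge2$: here $\x_{\std}(\sigma)=\frac{c_{1}(\sigma)-1}{n-1}\ge\frac{1}{n-1}\ge\frac{3}{n(n-2)(n-4)}$, so $(n-1,1)\in\T_{n}$ already witnesses the bound. This leaves $c_{1}(\sigma)\le1$, the setting of this subsection. For $c_{1}(\sigma)=1$, evaluating the relevant rows of Table~\ref{tab:Dimensions-and-characters} at $c_{1}=1$ gives $\chi_{(n-3,3)}(\sigma)=c_{3}(\sigma)$ and $\chi_{(n-3,2,1)}(\sigma)=1-c_{3}(\sigma)$. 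If $c_{3}(\sigma)\ge1$ I use $(n-3,3)$, for which $\x_{(n-3,3)}(\sigma)\ge\frac{6}{n(n-1)(n-5)}\ge\frac{3}{n(n-2)(n-4)}$, the last step amounting to $2(n-2)(n-4)\ge(n-1)(n-5)$, which holds for every $n$. If $c_{3}(\sigma)=0$, then $\chi_{(n-3,2,1)}(\sigma)=1$ and hence $\x_{(n-3,2,1)}(\sigma)=\frac{1}{\dim(n-3,2,1)}=\frac{3}{n(n-2)(n-4)}$, meeting the asserted bound exactly; this is the extremal regime, which is why the constant is stated in this precise form.

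It remains to treat $c_{1}(\sigma)=0$, which I would split by the parity of $\sigma$. If $\sigma$ is odd, then $(n-1,1)^{t}\in\T_{n}$ satisfies $\x_{(n-1,1)^{t}}(\sigma)=-\x_{\std}(\sigma)=\frac{1}{n-1}\ge\frac{3}{n(n-2)(n-4)}$. If $\sigma$ is even, Table~\ref{tab:Dimensions-and-characters} at $c_{1}=0$ gives $\chi_{(n-2,2)}(\sigma)=c_{2}(\sigma)$ and $\chi_{(n-2,1,1)}(\sigma)=1-c_{2}(\sigma)$; when $c_{2}(\sigma)\ge1$ I use $(n-2,2)$ with $\x_{(n-2,2)}(\sigma)\ge\frac{2}{n(n-3)}\ge\frac{3}{n(n-2)(n-4)}$, and when $c_{2}(\sigma)=0$ I use $(n-2,1,1)^{t}\in\T_{n}$, for which (using $\sgn(\sigma)=1$) $\x_{(n-2,1,1)^{t}}(\sigma)=\x_{(n-2,1,1)}(\sigma)=\frac{1}{\dim(n-2,1,1)}=\frac{2}{(n-1)(n-2)}\ge\frac{3}{n(n-2)(n-4)}$. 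These sub-cases exhaust $c_{1}(\sigma)\le1$, and together with the first step they cover every $\sigma\in S_{n}$.

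I do not anticipate a genuine obstacle: the content is choosing the right case split (by $c_{1}(\sigma)$, then by $c_{3}(\sigma)$ in one branch and by parity together with $c_{2}(\sigma)$ in the other) and reading the polynomials of Table~\ref{tab:Dimensions-and-characters} correctly. The conceptual point worth noting is that the eight irreps of $\T_{n}$ are calibrated precisely so that in each of these regimes one of them carries a normalized character of the right order of magnitude ($\Theta(1/n)$, $\Theta(1/n^{2})$, or $\Theta(1/n^{3})$) to beat $\frac{3}{n(n-2)(n-4)}$, with the tightness of the constant coming from the regime $c_{1}(\sigma)=1$, where $(n-3,2,1)$ does the work.
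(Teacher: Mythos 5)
Your proof is correct and follows essentially the same route as the paper: a case analysis on $c_{1}(\sigma)$, then on $c_{3}$ (when $c_{1}=1$) and on $c_{2}$ and parity (when $c_{1}=0$), exhibiting in each regime the same witnesses from $\T_{n}$ — $(n-1,1)$, $(n-3,3)$, $(n-3,2,1)$, $(n-2,2)$, $(n-2,1,1)^{t}$, $(n-1,1)^{t}$ — via the polynomials of Table \ref{tab:Dimensions-and-characters}. The only difference is a harmless reorganization of the $c_{1}=0$ sub-cases (you split by parity first, the paper by $c_{2}$ first), and your identification of $c_{1}=1$, $c_{3}=0$ as the extremal case matches the paper's constant.
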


\begin{proof}
We deal with six different cases in the following table (the values
of normalized characters can be read from Table \ref{tab:Dimensions-and-characters}):\\
\hspace*{\fill}%
\begin{tabular}{|c|c|}
\hline 
Assumptions on $\sigma$ & lower bound on $f\left(\sigma\right)$\tabularnewline
\hline 
\hline 
$c_{1}\ge2$ & $f\left(\sigma\right)\ge\x_{\std}\left(\sigma\right)\ge\frac{1}{n-1}$\tabularnewline
\hline 
$c_{1}=1$, $c_{3}\ge1$ & $f\left(\sigma\right)\ge\x_{\left(n-3,3\right)}\left(\sigma\right)=\frac{6c_{3}}{n\left(n-1\right)\left(n-5\right)}\ge\frac{6}{n\left(n-1\right)\left(n-5\right)}$\tabularnewline
\hline 
$c_{1}=1$, $c_{3}=0$ & $f\left(\sigma\right)\ge\x_{\left(n-3,2,1\right)}\left(\sigma\right)=\frac{3}{n\left(n-2\right)\left(n-4\right)}$\tabularnewline
\hline 
$c_{1}=0$, $c_{2}\ge1$ & $f\left(\sigma\right)\ge\x_{\left(n-2,2\right)}\left(\sigma\right)=\frac{2c_{2}\left(\sigma\right)}{n\left(n-3\right)}\ge\frac{2}{n\left(n-3\right)}$\tabularnewline
\hline 
$c_{1}=0$, $c_{2}=0$, $\sigma$ is even & $f\left(\sigma\right)\ge\x_{\left(n-2,1,1\right)^{t}}\left(\sigma\right)=\frac{2}{\left(n-1\right)\left(n-2\right)}$\tabularnewline
\hline 
$c_{1}=0$, $c_{2}=0$, $\sigma$ is odd & $f\left(\sigma\right)\ge\x_{\std^{t}}\left(\sigma\right)=\frac{1}{\left(n-1\right)}$\tabularnewline
\hline 
\end{tabular}\hspace*{\fill}\\
\end{proof}
\begin{lem}
\label{lem:ten beats any family with k>4}Let $\left\{ \rho_{n}\right\} _{n\ge n_{0}}$
be a family of irreps $\rho_{n}\in\widehat{S_{n}}$ as in Definition
\ref{def:family of irreps} with $k\ge5$ blocks outside the first
row or outside the first column. Then for large enough $n$ and every
$\sigma\in S_{n}$ with $c_{1}\left(\sigma\right)\le1$,
\[
\x_{\rho_{n}}\left(\sigma\right)\le f\left(\sigma\right)\stackrel{\mathrm{def}}{=}\max_{\rho\in\T_{n}}\x_{\rho}\left(\sigma\right).
\]
\end{lem}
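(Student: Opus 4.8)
\section*{Proof plan for Lemma~\ref{lem:ten beats any family with k>4}}

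The plan is to compare $\x_{\rho_n}\!\left(\sigma\right)$ directly with the explicit lower bounds on $f\!\left(\sigma\right)$ furnished by Lemma~\ref{lem:ten at least 2/n^3}, using the polynomial description of characters from Lemmas~\ref{lem:poly in c_1,c_2,..} and~\ref{lem:monomials in the polynomial for characters}. Since $\chi_{\rho_n^{t}}\!\left(\sigma\right)=\sgn\!\left(\sigma\right)\chi_{\rho_n}\!\left(\sigma\right)$ and the asserted inequality is trivial whenever $\x_{\rho_n}\!\left(\sigma\right)\le0$, it suffices to bound $\left|\chi_{\rho_n}\!\left(\sigma\right)\right|/\dim\rho_n$; in particular a family with $k$ blocks outside the first column reduces to one with $k$ blocks outside the first row, so assume the latter. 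Write $\chi_{\rho_n}\!\left(\sigma\right)=p\!\left(c_1\!\left(\sigma\right),\dots,c_k\!\left(\sigma\right)\right)$, where every monomial of $p$ has weighted degree $\le k$ (weight $i$ being assigned to $c_i$), and recall that $\dim\rho_n=p\!\left(n,0,\dots,0\right)$ is a polynomial in $n$ of degree exactly $k$, so $\dim\rho_n\ge\tfrac{a_{\rho}}{2}n^{k}$ for $n$ large, with $a_{\rho}>0$ its leading coefficient.

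For $k\ge7$ a crude bound suffices. As $c_1\!\left(\sigma\right)\le1$, each monomial of $p$ evaluated at $\sigma$ has absolute value at most $\prod_{i\ge2}c_i\!\left(\sigma\right)^{\alpha_i}\le\prod_{i\ge2}\left(n/i\right)^{\alpha_i}$, and $\sum_{i\ge2}\alpha_i\le\tfrac12\sum_{i\ge2}i\,\alpha_i\le k/2$; hence $\left|\chi_{\rho_n}\!\left(\sigma\right)\right|=O\!\left(n^{\lfloor k/2\rfloor}\right)$ and $\x_{\rho_n}\!\left(\sigma\right)=O\!\left(n^{-\lceil k/2\rceil}\right)=O\!\left(n^{-4}\right)$, which is eventually smaller than $\tfrac{3}{n\left(n-2\right)\left(n-4\right)}\le f\!\left(\sigma\right)$ by Lemma~\ref{lem:ten at least 2/n^3}. (Equivalently, for $k\ge7$ one may apply Theorem~\ref{thm:larsen-shalev} with $f=1$, giving $\x_{\rho_n}\!\left(\sigma\right)\le\left(\dim\rho_n\right)^{-1/2+\varepsilon_n}$, whose exponent in $n$ tends to $-k/2\le-7/2<-3$.)

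This leaves the finitely many families with $k\in\left\{5,6\right\}$, where the crude bound only yields $\x_{\rho_n}\!\left(\sigma\right)=O\!\left(n^{-3}\right)$ --- the same order as the worst case of $f\!\left(\sigma\right)$ --- and one must exploit that the top-order part of $\left|\chi_{\rho_n}\!\left(\sigma\right)\right|$ is carried by $c_2$ and $c_3$. For $k=5$, listing the surviving monomials (once $c_1\le1$ only $1,c_2,c_2^{2},c_3,c_2c_3,c_4,c_5$ can occur, each bounded by $n\left(1+c_2\right)$) gives $\left|\chi_{\rho_n}\!\left(\sigma\right)\right|\le C_{\rho}\,n\left(1+c_2\!\left(\sigma\right)\right)$, so $\x_{\rho_n}\!\left(\sigma\right)\le\tfrac{2C_{\rho}\left(1+c_2\left(\sigma\right)\right)}{a_{\rho}n^{4}}$; then split on $c_2\!\left(\sigma\right)\ge2$ (so $f\!\left(\sigma\right)\ge\tfrac{c_2\left(\sigma\right)}{n\left(n-3\right)}$ from $\left(n-2,2\right)$, using Table~\ref{tab:Dimensions-and-characters} and $c_1\le1$, whence the ratio is $O\!\left(n^{-2}\right)\to0$) versus $c_2\!\left(\sigma\right)\le1$ (so $\x_{\rho_n}\!\left(\sigma\right)=O\!\left(n^{-4}\right)$ while $f\!\left(\sigma\right)\ge\tfrac{3}{n\left(n-2\right)\left(n-4\right)}$). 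For $k=6$ one gets analogously $\left|\chi_{\rho_n}\!\left(\sigma\right)\right|\le C_{\rho}\,n\!\left(\left(1+c_2\!\left(\sigma\right)\right)^{2}+\left(1+c_3\!\left(\sigma\right)\right)^{2}\right)$ and splits into: $c_2\!\left(\sigma\right)\ge2$ (then $f\!\left(\sigma\right)\ge\tfrac{2}{n\left(n-3\right)}$ via $\left(n-2,2\right)$ dominates $\x_{\rho_n}\!\left(\sigma\right)=O\!\left(n^{-3}\right)$); $c_3\!\left(\sigma\right)\ge2$, $c_2\!\left(\sigma\right)\le1$ (then $f\!\left(\sigma\right)\ge\tfrac{3c_3\left(\sigma\right)}{n\left(n-1\right)\left(n-5\right)}$ via $\left(n-3,3\right)$ dominates $\x_{\rho_n}\!\left(\sigma\right)=O\!\left(c_3\!\left(\sigma\right)^{2}/n^{5}\right)$); and $c_2\!\left(\sigma\right),c_3\!\left(\sigma\right)\le1$ (then $\x_{\rho_n}\!\left(\sigma\right)=O\!\left(n^{-5}\right)$ against $f\!\left(\sigma\right)=\Omega\!\left(n^{-3}\right)$ from Lemma~\ref{lem:ten at least 2/n^3}). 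In every branch the power of $n$ bounding $\x_{\rho_n}\!\left(\sigma\right)$ is strictly below the one bounding $f\!\left(\sigma\right)$, so the inequality holds once $n$ is large enough in terms of the (fixed) family.

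The crux is precisely the regime $k\in\left\{5,6\right\}$: there $\x_{\rho_n}\!\left(\sigma\right)$ can genuinely reach order $n^{-3}$, matching $f\!\left(\sigma\right)$ in the worst case $c_1\!\left(\sigma\right)=1$, and the work is to pinpoint which monomials of the character polynomial attain that order and to verify that their presence forces $c_2\!\left(\sigma\right)$ or $c_3\!\left(\sigma\right)$ to be of order $n$ --- which in turn makes $\left(n-2,2\right)$ or $\left(n-3,3\right)\in\T_n$ win. The case split above is organised so that this can be read off uniformly, rather than by inspecting the character polynomials of the finitely many such families one by one.
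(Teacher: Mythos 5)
Your proof is correct, and it draws on the same toolkit as the paper's argument -- the weighted-monomial bound of Lemma \ref{lem:monomials in the polynomial for characters}, the lower bound $f\left(\sigma\right)\ge\frac{3}{n\left(n-2\right)\left(n-4\right)}$ of Lemma \ref{lem:ten at least 2/n^3}, and the explicit characters of $\left(n-2,2\right)$ and $\left(n-3,3\right)$ from Table \ref{tab:Dimensions-and-characters} -- but your case decomposition is organized differently. The paper dispenses with any special treatment of $k\in\left\{5,6\right\}$ by one extra observation: conditioning on $c_{2}\left(\sigma\right)\le1$ (in addition to $c_{1}\left(\sigma\right)\le1$) improves the monomial bound from $O\left(n^{\left\lfloor k/2\right\rfloor }\right)$ to $O\left(n^{\left\lfloor k/3\right\rfloor }\right)$, since then only $c_{3},\ldots,c_{k}$ can contribute and $\sum_{i\ge3}\alpha_{i}\le k/3$. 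This gives a uniform two-case argument for all $k\ge5$: if $c_{2}\left(\sigma\right)\ge2$ then $f\left(\sigma\right)\ge\frac{2}{n\left(n-3\right)}$ via $\left(n-2,2\right)$ while $\x_{\rho_{n}}\left(\sigma\right)=O\left(n^{\left\lfloor k/2\right\rfloor -k}\right)=O\left(n^{-3}\right)$; if $c_{2}\left(\sigma\right)\le1$ then $f\left(\sigma\right)\ge\frac{3}{n\left(n-2\right)\left(n-4\right)}$ while $\x_{\rho_{n}}\left(\sigma\right)=O\left(n^{\left\lfloor k/3\right\rfloor -k}\right)=O\left(n^{-4}\right)$. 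Your route instead treats $k\ge7$ crudely and then, for $k\in\left\{5,6\right\}$, tracks the $c_{2}$- and $c_{3}$-dependence of the character polynomial explicitly and matches it against the linear growth in $c_{2}$ (resp.\ $c_{3}$) of $\x_{\left(n-2,2\right)}$ (resp.\ $\x_{\left(n-3,3\right)}$); this costs more bookkeeping and an extra appeal to $\left(n-3,3\right)$ for $k=6$, but it does make visible exactly where the crude $O\left(n^{-3}\right)$ bound is tight and why a large $c_{2}$ or $c_{3}$ then forces an irrep of $\T_{n}$ to win. Both arguments are valid with constants depending on the fixed family, which is all the lemma requires, since it is later applied only to the finitely many families with $5\le k\le13$.
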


\begin{proof}
If $c_{2}\left(\sigma\right)\ge2$ then 
\[
f\left(\sigma\right)\ge\x_{\left(n-2,2\right)}\left(\sigma\right)\ge\frac{2\left(c_{2}\left(\sigma\right)-1\right)}{n\left(n-3\right)}\ge\frac{2}{n\left(n-3\right)}.
\]
In contrast, by Lemma \ref{lem:monomials in the polynomial for characters},
as $c_{2}\left(\sigma\right),\ldots,c_{k}\left(\sigma\right)\le n$,
we have $\chi_{\rho_{n}}\left(\sigma\right)\le O(n^{\left\lfloor k/2\right\rfloor })$
and $\chi_{\rho_{n}}\left(1\right)$ is a polynomial in $n$ of degree
$k$. We deduce that the normalized character satisfies $\x_{\rho_{n}}\left(\sigma\right)\le O\left(n^{\left\lfloor k/2\right\rfloor -k}\right)=O\left(n^{-3}\right)$
as $k\ge5$.

If $c_{2}\left(\sigma\right)\le1$ then by Lemma \ref{lem:ten at least 2/n^3}
$f\left(\sigma\right)\ge\frac{3}{n\left(n-2\right)\left(n-4\right)}$
whereas, by Lemma \ref{lem:monomials in the polynomial for characters},
$\chi_{\rho_{n}}\left(\sigma\right)\le O(n^{\left\lfloor k/3\right\rfloor })$
and so $\x_{\rho_{n}}(\sigma)\le O\left(n^{\left\lfloor k/3\right\rfloor -k}\right)=O(n^{-4})$
as $k\ge5$.
\end{proof}
Of course, Lemma \ref{lem:ten beats any family with k>4} suffices
to deal with every family of irreps separately, but not with all irreps
uniformly. For this, we need to use Larsen-Shalev's Theorem \ref{thm:larsen-shalev}.
First, as above, we need a uniform lower bound on the dimension of
almost all irreps:
\begin{lem}
\label{lem:dim > n^6}Let $n\ge39$, and let $\rho\in\widehat{S_{n}}$
be an irrep represented by a Young diagram with at least 14 blocks
outside the first row and at least 14 blocks outside the first column.
Then $\dim\rho\ge n^{6.05}$.
\end{lem}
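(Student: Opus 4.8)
The plan is to mimic the proof of Lemma~\ref{lem:dim > n^2}, replacing the threshold $3$ by $14$ and the exponent $2.05$ by $6.05$. For $\rho\in\widehat{S_n}$ let $r(\rho)$ and $c(\rho)$ denote the number of blocks outside the first row, respectively the first column, of its Young diagram; by hypothesis $r(\rho),c(\rho)\ge14$. Since $\dim\rho=\dim\rho^t$ and transposition swaps $r$ and $c$, I would split into two cases: (i) $r(\rho)=14$ --- which, after possibly transposing, covers $\min(r(\rho),c(\rho))=14$; and (ii) $r(\rho),c(\rho)\ge15$.

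For case (i), the diagram of $\rho$ is $(n-14,\mu)$ for some $\mu\vdash14$. One first notes that for $n\ge39$ every $\mu\vdash14$ gives a legitimate diagram (since $n-14\ge14\ge\mu_1$) with $r=14$ and $c=n-1-\ell(\mu)\ge n-15\ge14$, so all of these occur and automatically satisfy the lemma's hypothesis. By Lemma~\ref{lem:monomials in the polynomial for characters} (equivalently, the hook length formula) $\dim\rho^{(n-14,\mu)}$ is a polynomial in $n$ of degree exactly $14$ with positive leading coefficient, so the plan is simply to verify, by a finite computer check over the $p(14)=135$ partitions $\mu\vdash14$, that each such polynomial exceeds $n^{6.05}$ for all $n\ge39$. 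The extremal case is essentially $\mu=(14)$, where $\dim\rho^{(n-14,14)}=\binom{n}{14}-\binom{n}{13}$ is already just above $n^{6.05}$ at $n=39$; this is what pins down the hypothesis $n\ge39$.

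For case (ii) I would argue by induction on $n$, checking the base cases $n\in\{39,40\}$ directly on a computer (all irreps of $S_n$ with $r,c\ge14$ have dimension $\ge n^{6.05}$). For $n\ge41$ and $\rho$ with $r(\rho),c(\rho)\ge15$, apply the branching rule $\dim\rho=\sum_{\rho'=\rho-\square}\dim\rho'$. If the diagram of $\rho$ is not a rectangle it has at least two removable corners, and removing one block keeps $r,c\ge14$, so each of (at least) two summands has dimension $\ge(n-1)^{6.05}$ by the inductive hypothesis at $n-1\ge40$ together with case (i) --- giving $\dim\rho\ge2(n-1)^{6.05}\ge n^{6.05}$. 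If $\rho=(b^a)$ is a rectangle, then $a,b\ge2$ (otherwise $r$ or $c$ would vanish), hence $a,b\le n/2$ and $r(\rho)=n-b\ge n/2$, $c(\rho)=n-a\ge n/2$; deleting two blocks in the two possible ways gives $\rho_1,\rho_2\in\widehat{S_{n-2}}$ with $r,c\ge n/2-2\ge14$, so $\dim\rho=\dim\rho_1+\dim\rho_2\ge2(n-2)^{6.05}\ge n^{6.05}$ by the inductive hypothesis at $n-2\ge39$. The elementary inequalities $2(n-1)^{6.05}\ge n^{6.05}$ and $2(n-2)^{6.05}\ge n^{6.05}$ for $n\ge41$ are checked at once.

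The only real obstacle is case (i): because removing blocks cannot lower $n$ while staying inside the class $\{r=14\}$, this boundary family is invisible to the induction and must be handled via the explicit degree-$14$ dimension polynomials, where the bound $n^{6.05}$ is nearly sharp at $n=39$. Once case (i) is in hand, case (ii) --- and hence the lemma --- is a routine repetition of the proof of Lemma~\ref{lem:dim > n^2}.
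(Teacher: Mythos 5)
Your proof is correct and is essentially the paper's argument: the interior case (at least $15$ blocks outside both the first row and the first column) is handled by the same branching-rule induction as in Lemma \ref{lem:dim > n^2}, and the boundary case of exactly $14$ blocks outside the first row (or column) by explicit dimension estimates. The only cosmetic difference is in that boundary case: instead of your check of the $135$ degree-$14$ dimension polynomials for all $n\ge 39$, the paper uses a single uniform hook-length bound $\dim\rho\ge\frac{(n-1)(n-3)\cdots(n-27)}{14!}$, valid for every such shape and exceeding $n^{6.05}$ for $n\ge 47$, together with a numerical verification for $n=39,\ldots,48$.
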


\begin{proof}
We verified the statement numerically for $n=39,\ldots,48$. For the
general case, assume that $\rho$ has exactly $14$ blocks outside
the first row (the transpose case is identical). Consider the hook
lengths at the first row of the Young diagram associated with $\rho$.
The hook length at the $15$th block is $n-28$, since the second
row is of length $\le14$. The hook length at the $16th$ block is
$n-29$ and so on. For $1\le i\le14$, the first $\left(i-1\right)$
columns contain at least $2\left(i-1\right)$ blocks, so the hook
length of the $i$th block is at most $n-2\left(i-1\right)$. The
product of hook lengths of all blocks outside the first row is at
most $14!$ (by the hook length formula for Young diagrams with $14$
blocks). Thus, by the hook length formula, we obtain that 
\begin{eqnarray*}
\dim\rho & = & \frac{n!}{\mathrm{product~of~hook~lengths}}\\
 & \ge & \frac{n!}{n\left(n-2\right)\left(n-4\right)\cdots\left(n-26\right)\cdot\left(n-28\right)!\cdot14!}=\frac{\left(n-1\right)\left(n-3\right)\left(n-5\right)\cdots\left(n-27\right)}{14!},
\end{eqnarray*}
which is greater than $n^{6.05}$ for $n\ge47$. For $n\ge49$ and
$\rho\in\widehat{S_{n}}$ with at least $15$ blocks outside the first
row or outside the first column, we proceed by induction exactly as
in the proof of Lemma \ref{lem:dim > n^2}.
\end{proof}

\begin{proof}[Proof of Proposition \ref{prop:c1=00003D0 or c1=00003D1}]
As in the proof of Lemma \ref{lem:2 fixed points}, we deduce from
Theorem \ref{thm:larsen-shalev} and Lemma \ref{lem:dim > n^6} that
for large enough $n$, for all $\sigma\in S_{n}$ with $c_{1}\left(\sigma\right)\le1$
and for all $\rho\in\widehat{S_{n}}$ with at least $14$ blocks outside
the first row or outside the first column, we have 
\[
\x_{\rho}\left(\sigma\right)\le\frac{1}{n^{3}},
\]
which, by Lemma \ref{lem:ten at least 2/n^3}, is less than the maximal
normalized character of $\sigma$ among the irreps in $\T_{n}$. For
the finite number of families of irreps with $5\le k\le13$, where
$k$ is the number of blocks outside the first row or column, we use
Lemma \ref{lem:ten beats any family with k>4}. Finally, the normalized
characters of all irreps with $k\le4$ appear in Table \ref{tab:Dimensions-and-characters}.
Comparing them for large $n$ yields that for every $\sigma\in S_{n}$
with $c_{1}\left(\sigma\right)\le1$, some $\rho\in\T_{n}$ rules.
We omit the technical details of this comparison, but see Tables \ref{tab:ruling irrep for even perms}
and \ref{tab:ruling irrep for odd perms} for the ruling irrep in
each case.
\end{proof}
\begin{table}[h]
\begin{centering}
\begin{tabular}{|>{\centering}m{3cm}|>{\centering}m{7cm}|}
\hline 
$\rho$ & $\rho$ rules for even $\sigma$ when:\tabularnewline
\hline 
\hline 
$\left(n-1,1\right)$ $\left(n-1,1\right)^{t}$ & $c_{1}\ge2$\tabularnewline
\hline 
$\left(n-2,2\right)$

$\left(n-2,2\right)^{t}$ & $c_{1}=1,c_{2}\ge2$

$c_{1}=0,c_{2}\ge1$\tabularnewline
\hline 
$\left(n-2,1,1\right)$

$\left(n-2,1,1\right)^{t}$ & $c_{1}=0,c_{2}=0,c_{3}\le\frac{n-4}{3}$\tabularnewline
\hline 
$\left(n-3,3\right)$

$\left(n-3,3\right)^{t}$ & $c_{1}=1,c_{2}=1,c_{3}\ge1$

$c_{1}=1,c_{2}=0,c_{3}\ge2$

$c_{1}=1,c_{2}=0,c_{3}=1,c_{4}\le\frac{n-5}{4}$

$c_{1}=0,c_{2}=0,c_{3}=\frac{n}{3}$\tabularnewline
\hline 
$\left(n-3,2,1\right)$

$\left(n-3,2,1\right)^{t}$ & $c_{1}=1,c_{2}=1,c_{3}=0,c_{4}\le\frac{n+3}{8}$

$c_{1}=1,c_{2}=0,c_{3}=0,c_{4}\le\frac{n-5}{8}$\tabularnewline
\hline 
$\left(n-4,4\right)$

$\left(n-4,4\right)^{t}$ & $c_{1}=1,c_{2}=1,c_{3}=0,c_{4}\ge\frac{n+4}{8}$

$c_{1}=1,c_{2}=0,c_{3}=1,c_{4}=\frac{n-4}{4}$

$c_{1}=1,c_{2}=0,c_{3}=0,c_{4}\ge\frac{n-4}{8}$\tabularnewline
\hline 
\end{tabular}
\par\end{centering}
\caption{\label{tab:ruling irrep for even perms}For large enough $n$, this
table shows which $\rho\in\protect\T_{n}$ rules for every \textbf{even}
$\sigma\in S_{n}$. Note that at least one of every pair of irreps
in the left column belongs to $\protect\T_{n}$.}
\end{table}
\begin{table}[h]
\begin{centering}
\begin{tabular}{|>{\centering}m{3cm}|>{\centering}m{7cm}|}
\hline 
$\rho$ & $\rho$ rules for odd $\sigma$ when:\tabularnewline
\hline 
\hline 
$\left(n-1,1\right)$ & $c_{1}\ge2$\tabularnewline
\hline 
$\left(n-1,1\right)^{t}$ & $c_{1}=0,c_{2}\le\frac{n-3}{2}$\tabularnewline
\hline 
$\left(n-2,2\right)$ & $c_{1}=0,c_{2}=\frac{n}{2}$\tabularnewline
\hline 
$\left(n-2,2\right)^{t}$ & $c_{1}=1,c_{2}=0$\tabularnewline
\hline 
$\left(n-2,1,1\right)^{t}$ & $c_{1}=2,c_{2}=\frac{n-2}{2}$

$c_{1}=1,c_{2}\ge2$

$c_{1}=1,c_{2}=1,c_{3}\le\frac{n-4}{3}$\tabularnewline
\hline 
$\left(n-3,3\right)$ & $c_{1}=1,c_{2}=1,c_{3}=\frac{n-3}{3}$\tabularnewline
\hline 
\end{tabular}
\par\end{centering}
\caption{\label{tab:ruling irrep for odd perms}For large enough $n$, this
table shows which $\rho\in\protect\T_{n}$ rules for every \textbf{odd}
$\sigma\in S_{n}$.}
\end{table}

\section{Arbitrary Normal Sets\label{sec:Arbitrary-Normal-Sets}}

In this section we prove the negative result, Theorem \ref{thm:normal-sets-negative-result},
stating that no finite set of families of irreps is enough to capture
$\lambda\left(S_{n},\Sigma\right)$ for an arbitrary symmetric non-negative
element $\Sigma\in\mathbb{R}\left[S_{n}\right]$, nor even for a normal
non-negative element $\Sigma\in\mathbb{R}\left[S_{n}\right]$. We
also prove the positive result, Theorem \ref{thm:normal-sets-positive-result},
which says that for every normal non-negative $\Sigma\in\mathbb{R}\left[S_{n}\right]$,
$\std$ alone captures the spectral gap $\left|\Sigma\right|-\lambda\left(S_{n},\Sigma\right)$
up to a decaying multiplicative factor.

\subsection{A negative result}

Before proving Theorem \ref{thm:normal-sets-negative-result}, we
remark that the irreducible characters of $S_{n}$ constitute a linear
basis for the space of class functions on $S_{n}$, hence there are
normal elements $\Sigma\in\mathbb{R}\left[S_{n}\right]$ giving any
prescribed values for the eigenvalues of every irreps as in \eqref{eq:evalue of normal}.
One could still hope that as we restrict to \emph{non-negative} normal
elements, the answer to Question \ref{question:false-conj} would
still be affirmative in the case of normal sets. However, this is
not the case, as we now prove.
\begin{proof}[Proof of Theorem \ref{thm:normal-sets-negative-result}]
Let $\rho^{\left(1\right)},\ldots,\rho^{\left(m\right)}$ be arbitrary
families of non-trivial irreps of $S_{n}$ as in Definition \ref{def:family of irreps}.
Assume each of these families has at most $k$ blocks outside the
first row/column. In particular, for $n\ge2k$, the evaluation of
the characters $\rho^{\left(1\right)},\ldots,\rho^{\left(m\right)}$
on $\sigma\in S_{n}$ depends only on the numbers $c_{1}\left(\sigma\right),\ldots,c_{k}\left(\sigma\right)$
of short cycles -- see Lemma \ref{lem:poly in c_1,c_2,..}. Moreover,
as the characters are given by polynomials in $c_{1},\ldots,c_{k}$,
their expected values for some $\Sigma\in\mathbb{R}\left[S_{n}\right]$
depend only on the \emph{joint distribution} of $\left\{ \left(c_{1}\left(\sigma\right),\ldots,c_{k}\left(\sigma\right)\right)\right\} _{\sigma\in\Sigma}$,
and even more particularly on the distribution in $\Sigma$ of the
monomials $c_{1}^{\alpha_{1}}\cdots c_{k}^{\alpha_{k}}$ with $\sum i\cdot\alpha_{i}\le k$
(see Lemma \ref{lem:monomials in the polynomial for characters}).

Now consider the uniform distribution on $S_{2k}$. By orthogonality
of irreducible characters, for every $\triv\ne\rho\in\widehat{S_{2k}}$,
the expected value of $\chi_{\rho}\left(\sigma\right)$ is zero. Now,
for every $n\ge2k$, if $\Sigma\in\mathbb{R}\left[S_{n}\right]$ is
a normal non-negative element with the same joint distribution of
$c_{1},\ldots,c_{k}$ as the uniform distribution in $S_{2k}$, we
get that the eigenvalues of $\mathrm{Cay}\left(S_{n},\Sigma\right)$
associated with $\rho_{~n}^{\left(1\right)},\ldots,\rho_{~n}^{\left(m\right)}$
all vanish. For large enough $n$, one can construct such $\Sigma$
with large values of, say, $c_{k+1}$, and supported on odd conjugacy
classes. This would assure that there is some irrep $\rho$ with $k+1$
blocks outside the first row or outside the first column with large
positive average value of the character $\chi_{\rho}$. Hence $\rho^{\left(1\right)},\ldots,\rho^{\left(m\right)}$
do not rule for such $\Sigma$.
\end{proof}
\begin{example}
Let us illustrate the proof for the families $\rho^{\left(1\right)}=\left(n-1,1\right)$,
$\rho^{\left(2\right)}=\left(n-2,2\right)$ and $\rho^{\left(3\right)}=\left(n-2,1,1\right)^{t}$
and $n=100$. So we can take $k=2$ and consider the joint distribution
of $\left(c_{1},c_{2}\right)$ in $S_{4}$:\\
\hspace*{\fill}%
\begin{tabular}{|c|c|c|}
\hline 
$c_{1}$ & $c_{2}$ & probability\tabularnewline
\hline 
\hline 
4 & 0 & $1/24$\tabularnewline
\hline 
2 & 1 & $1/4$\tabularnewline
\hline 
1 & 0 & $1/3$\tabularnewline
\hline 
0 & 2 & $1/8$\tabularnewline
\hline 
0 & 0 & $1/4$\tabularnewline
\hline 
\end{tabular} \hspace*{\fill}\medskip{}
\\
Consider the following permutations in $S_{100}$: 
\begin{eqnarray*}
\sigma_{1} & = & \left(1~2~3\right)\left(4~5~6\right)\ldots\left(88~89~90\right)\left(91\ldots96\right)\\
\sigma_{2} & = & \left(1~2~3\right)\left(4~5~6\right)\ldots\left(94~95~96\right)\left(97~98\right)\\
\sigma_{3} & = & \left(1~2~3\right)\left(4~5~6\right)\ldots\left(91~92~93\right)\left(94\ldots99\right)\\
\sigma_{4} & = & \left(1~2~3\right)\left(4~5~6\right)\ldots\left(88~89~90\right)\left(91\ldots96\right)\left(97~98\right)\left(99~100\right)\\
\sigma_{5} & = & \left(1~2~3\right)\left(4~5~6\right)\ldots\left(94~95~96\right)\left(97\ldots100\right)
\end{eqnarray*}
and notice that they all have many $3$-cycles and are all odd. Define
$\Sigma\in\mathbb{R}\left[S_{100}\right]$ by
\[
\Sigma=\sum\nolimits _{i=1}^{5}\alpha_{i}\cdot\sigma_{i}^{S_{100}}
\]
so that $\alpha_{i}\cdot\left|\sigma_{\smash{i}}^{S_{100}}\right|$
is equal to the probability in the $i$th line in the table. For example,
$\alpha_{1}\cdot\left|\sigma_{\smash{1}}^{S_{100}}\right|=\frac{1}{24}$.
This choice of $\Sigma$ assures that for $i=1,\ldots,m$, $\rho^{\left(i\right)}\left(\Sigma\right)$
is the zero matrix, but there is an irrep with three blocks outside
the first row or column with $\rho\left(\Sigma\right)$ being a positive
scalar matrix (in our case, this is true for the irreps $\left(97,3\right)$,
$\left(97,2,1\right)^{t}$ and $\left(97,1,1,1\right)$).
\end{example}

\begin{rem}
\label{rem:n=00003Dk}In fact, it seems that the proof of Theorem
\ref{thm:normal-sets-negative-result} can work also with the joint
distribution of $\left(c_{1},\ldots,c_{k}\right)$ induced from the
uniform distribution on $S_{k}$, rather than on $S_{2k}$. For example,
when $k=2$, the joint distribution in $S_{2}$ of $\left(c_{1},c_{2}\right)$
is $\left(2,0\right)$ and $\left(0,1\right)$ each with probability
$\frac{1}{2}$. For any normal $\Sigma\in\mathbb{R}\left[S_{n}\right]$
with such joint distribution on $\left(c_{1},c_{2}\right)$, the matrix
$\rho\left(\Sigma\right)$ is zero for $\rho\ne\triv,\sgn$ with at
most $2$ blocks outside the first row or the first column.
\end{rem}

\begin{rem}
Theorem \ref{thm:normal-sets-negative-result} is true also if one
restricts attention to normal sets in $S_{n}$, namely, to normal
elements $\Sigma\in\mathbb{R}\left[S_{n}\right]$ with $0-1$ coefficients.
The point is that there is a lot of flexibility in construction $\Sigma$
in the proof, so as $n\to\infty$ one can construct normal sets with
joint distribution of $\left(c_{1},\ldots,c_{k}\right)$ tending to
the one in $S_{2k}$ (or in $S_{k}$), while keeping the average value
of $c_{k+1}$ at least, say, $\frac{n}{2\left(k+1\right)}$, and all
conjugacy classes odd. This assures that some irrep with $k+1$ blocks
outside the first row/column eventually beats every irrep with at
most $k$ blocks outside the first row/column.
\end{rem}

\subsection{A positive result}

We now prove Theorem \ref{thm:normal-sets-positive-result}, based
on the following lemma:
\begin{lem}
\label{lem:std almost rules for any single conj class}Let $N_{0}$
be the constant from Theorem \ref{thm:one conj class}. For every
$n\ge N_{0}$ there is a constant $\delta_{n}>0$ tending to zero
as $n\to\infty$, so that for every $\sigma\in S_{n}$ and $\rho\in\widehat{S_{n}}\setminus\left\{ \triv,\sgn\right\} $
we have 
\begin{equation}
1-\x_{\rho}\left(\sigma\right)\ge\left[1-\x_{\std}\left(\sigma\right)\right]\cdot\left(1-\delta_{n}\right).\label{eq:std almost rules of single conj class}
\end{equation}
\end{lem}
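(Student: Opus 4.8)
The plan is to reduce the statement of Lemma~\ref{lem:std almost rules for any single conj class} to the already-established Theorem~\ref{thm:one conj class} together with the quantitative bounds from the previous section. The key observation is that the quantity $1-\x_\rho(\sigma)$ is always non-negative (since $\x_\rho(\sigma)\in[-1,1]$), and that $1-\x_{\std}(\sigma)=1-\frac{c_1(\sigma)-1}{n-1}=\frac{n-c_1(\sigma)}{n-1}$ is bounded above by $1$ and below by $\frac{1}{n-1}$ (as $c_1(\sigma)\le n-1$ whenever $\sigma\ne\mathrm{id}$; the case $\sigma=\mathrm{id}$ is trivial since then $\x_\rho=1$ for all $\rho$). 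So proving \eqref{eq:std almost rules of single conj class} amounts to showing that whenever $\x_\rho(\sigma)>\x_{\std}(\sigma)$ — which by Theorem~\ref{thm:one conj class} can only happen when $\rho\in\T_n$ and $c_1(\sigma)\le 1$ — the gap between $1-\x_\rho(\sigma)$ and $1-\x_{\std}(\sigma)$ is small relative to $1-\x_{\std}(\sigma)$ itself.

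\textbf{First}, I would dispose of the ``easy'' regime: by Proposition~\ref{prop:c1 ge 2}, if $c_1(\sigma)\ge 2$ and $n\ge N_1$, then $\std$ rules, so $\x_\rho(\sigma)\le\x_{\std}(\sigma)$ for all $\rho\in\widehat{S_n}\setminus\{\triv,\sgn\}$, whence $1-\x_\rho(\sigma)\ge 1-\x_{\std}(\sigma)$ and \eqref{eq:std almost rules of single conj class} holds trivially with any $\delta_n\ge 0$. \textbf{Second}, I would handle $c_1(\sigma)\le 1$. Here $\x_{\std}(\sigma)=\frac{c_1(\sigma)-1}{n-1}\in\{\frac{-1}{n-1},0\}$, so $1-\x_{\std}(\sigma)=1+\frac{1-c_1(\sigma)}{n-1}\in\{1,\frac{n}{n-1}\}$; in particular it lies in $[1,\frac{n}{n-1}]$. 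On the other hand $1-\x_\rho(\sigma)\ge 1-\x_{\rho}(\sigma)$ where, for $\rho\ne\triv,\sgn$, the normalized character $\x_\rho(\sigma)$ tends to $0$ uniformly: by Lemma~\ref{lem:ten beats any family with k>4} and the Larsen--Shalev argument in the proof of Proposition~\ref{prop:c1=00003D0 or c1=00003D1}, for $n$ large and $c_1(\sigma)\le 1$ one has $\x_\rho(\sigma)\le C/n$ for every such $\rho$ with more than a bounded number of blocks outside the first row/column, and for the finitely many small families one reads off from Table~\ref{tab:Dimensions-and-characters} that $\x_\rho(\sigma)=O(1/n)$ when $c_1(\sigma)\le 1$. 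Hence $1-\x_\rho(\sigma)\ge 1-O(1/n)$, and since $1-\x_{\std}(\sigma)\le \frac{n}{n-1}=1+O(1/n)$, the ratio $\frac{1-\x_\rho(\sigma)}{1-\x_{\std}(\sigma)}\ge 1-O(1/n)=1-o_n(1)$. Setting $\delta_n$ to be this $o_n(1)$ term (maximized over the two regimes) completes the argument.

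\textbf{The main subtlety} is making the bound ``$\x_\rho(\sigma)=o(1)$ uniformly over all $\rho\ne\triv,\sgn$ when $c_1(\sigma)\le 1$'' genuinely uniform. This is exactly the content that was already assembled in Section~\ref{sec:A-Single-Conjugacy}: the Larsen--Shalev bound \eqref{eq:Larsen shalev} combined with the dimension lower bound Lemma~\ref{lem:dim > n^6} kills all high-dimensional irreps with an exponent bounded away from $0$, while the finitely many remaining families are controlled by Lemma~\ref{lem:ten beats any family with k>4} and the explicit polynomials of Table~\ref{tab:Dimensions-and-characters}. One must be slightly careful that the constants in these ``$O(1/n)$'' statements do not depend on $\sigma$ or $\rho$ — but they don't, because the Larsen--Shalev $\varepsilon_n$ is a single sequence, the dimension bounds are uniform, and there are only finitely many small families to check. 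I would also note for completeness that the case $\sigma=\mathrm{id}$ needs no argument since then both sides of \eqref{eq:std almost rules of single conj class} equal $0$. Theorem~\ref{thm:normal-sets-positive-result} then follows by writing a general non-negative normal $\Sigma=\sum_C \alpha_C \mathbf 1_C$, applying Lemma~\ref{lem:evalues of normal sets} to express each $\lambda_1(\rho(\Sigma))=\sum_C\alpha_C|C|\x_\rho(C)$, and taking the convex combination of \eqref{eq:std almost rules of single conj class} over $C$ with weights $\alpha_C|C|$.
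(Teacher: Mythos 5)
Your argument is correct and follows the same overall structure as the paper's proof: split into $c_{1}\left(\sigma\right)\ge2$ versus $c_{1}\left(\sigma\right)\le1$, settle the first case by Proposition \ref{prop:c1 ge 2} (with $\delta_{n}=0$ there), and in the second case exploit that $1-\x_{\std}\left(\sigma\right)\le1+\frac{1}{n-1}$ while every competing normalized character is $O\left(1/n\right)$. The difference is in how the second case is organized. The paper invokes the conclusion that some $\rho\in\T_{n}$ rules when $c_{1}\left(\sigma\right)\le1$ (Proposition \ref{prop:c1=00003D0 or c1=00003D1}), so that it suffices to verify \eqref{eq:std almost rules of single conj class} only for the seven irreps of $\T_{n}\setminus\left\{ \std\right\} $ via Table \ref{tab:Dimensions-and-characters}; this finite check produces an explicit $\delta_{n}=\frac{2\left(n-2\right)}{n\left(n-3\right)}$, the worst case being $\left(n-2,2\right)$ at $c_{1}=0$, $c_{2}=\frac{n}{2}$. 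You instead re-deploy the Section \ref{sec:A-Single-Conjugacy} machinery (Larsen--Shalev with Lemma \ref{lem:dim > n^6}, Lemma \ref{lem:ten beats any family with k>4} for the intermediate families, and the table for $k\le4$) to obtain a uniform $O\left(1/n\right)$ bound on $\x_{\rho}\left(\sigma\right)$ over all $\rho\ne\triv,\sgn$ when $c_{1}\left(\sigma\right)\le1$; this is heavier than needed but valid (the constants are indeed uniform, as you note), and it buys independence from the precise ruling statement at the price of a non-explicit $\delta_{n}$. Two harmless slips: for $\sigma\ne\mathrm{id}$ one has $c_{1}\left(\sigma\right)\le n-2$, not $n-1$; and Theorem \ref{thm:one conj class} does not literally say that $\x_{\rho}\left(\sigma\right)>\x_{\std}\left(\sigma\right)$ forces $\rho\in\T_{n}$, only that the maximum is attained in $\T_{n}$ --- but your executed argument never relies on that claim.
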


\begin{proof}
By Proposition \ref{prop:c1 ge 2} (and assuming $N_{0}\ge N_{1}$),
if $c_{1}\left(\sigma\right)\ge2$ then $\std$ rules and \eqref{eq:std almost rules of single conj class}
holds even with $\delta_{n}=0$. If $c_{1}\left(\sigma\right)\le1$
then one of $\rho\in\T_{n}$ rules, so it is enough to check \eqref{eq:std almost rules of single conj class}
for each of the seven irreps in $\T_{n}\setminus\left\{ \std\right\} $.
And, indeed, \eqref{eq:std almost rules of single conj class} holds
in this cases. The worst case is when $\sigma$ has $c_{1}\left(\sigma\right)=0$
and $c_{2}\left(\sigma\right)=\frac{n}{2}$, where $\left(n-2,2\right)$
rules, and $1-\x_{\left(n-2,2\right)}\left(\sigma\right)=1-\frac{1}{n-3}$,
whereas $1-\x_{\std}\left(\sigma\right)=1+\frac{1}{n-1}$. Hence one
can take $\delta_{n}=\frac{2\left(n-2\right)}{n\left(n-3\right)}$.
\end{proof}

\begin{proof}[Proof of Theorem \ref{thm:normal-sets-positive-result}]
By Lemma \ref{lem:evalues of normal sets}, for any non-negative
normal element $\Sigma\in\mathbb{R}\left[S_{n}\right]$, the eigenvalues
associated with $\rho\in\widehat{S_{n}}$ are all equal to $\sum_{C}\alpha_{C}\left|C\right|\x_{\rho}\left(C\right)$.
The gap between this eigenvalue and $\left|\Sigma\right|$ is
\[
\left|\Sigma\right|-\sum_{C}\alpha_{C}\left|C\right|\x_{\rho}\left(C\right)=\sum_{C}\alpha_{C}\left|C\right|\left(1-\x_{\rho}\left(C\right)\right).
\]
By Lemma \ref{lem:std almost rules for any single conj class}, if
$n\ge N_{0}$, this is at least
\[
\sum_{C}\alpha_{C}\left|C\right|\left[1-\x_{\std}\left(\sigma\right)\right]\cdot\left(1-\delta_{n}\right)=\left[\left|\Sigma\right|-\lambda_{1}\left(\std,\Sigma\right)\right]\cdot\left(1-\delta_{n}\right).\qedhere
\]
\end{proof}

\section{A Conjecture for Arbitrary Symmetric Sets and Its Consequences\label{sec:A-Conjecture}}

This section gives more justification, motivation and background to
Conjecture \ref{conj:kozma-puder}.

\subsection*{Background and Examples}

Recall that Conjecture \ref{conj:kozma-puder} says that there is
some $k\geq4$ such that the (signed and unsigned) actions of $S_{n}$
on $k$-tuples determine the spectral gap $\left|\Sigma\right|-\lambda\left(S_{n},\Sigma\right)$
up to a universal multiplicative factor, for every large enough $n$
and every symmetric non-negative element $\Sigma\in\mathbb{R}\left[S_{n}\right]$.
In terms of Definition \ref{def:family of irreps}, this means that
there is a finite number of families of irreps which are nearly dominant
spectrally, in the same sense. In fact, even the following stronger
version of Conjecture \ref{conj:kozma-puder} is conceivable:

\begin{ques}\label{question:stronger-conj}Is it true that for every
symmetric non-negative $\Sigma\in\mathbb{R}\left[S_{n}\right]$, the
spectral gap of the Cayley graph $\cay\left(S_{n},\Sigma\right)$
is equal, up to a decaying multiplicative factor, to the spectral
gap given by the action on $4$-tuples and the signed action on $4$-tuples,
namely, that

\[
\left|\Sigma\right|-\lambda\left(S_{n},\Sigma\right)\ge\left[\left|\Sigma\right|-\max\left(\lambda\left(4,\Sigma\right),\lambda^{\sgn}\left(4,\Sigma\right)\right)\right]\cdot\left[1-o_{n}\left(1\right)\right]?
\]
Perhaps even the non-signed action suffices, namely, is it true that
\[
\left|\Sigma\right|-\lambda\left(S_{n},\Sigma\right)\ge\left[\left|\Sigma\right|-\lambda\left(4,\Sigma\right)\right]\cdot\left[1-o_{n}\left(1\right)\right]?
\]

\end{ques}

Note that this question differs from Conjecture \ref{conj:kozma-puder}
both by giving a specific set of irreps, and by suggesting that the
multiplicative factor tends to $1$ as $n\to\infty$. The evidence
we gathered so far supports Conjecture \ref{conj:kozma-puder} and
even its stronger version -- Question \ref{question:stronger-conj}.
The conjecture is certainly true, and with $\std$ alone, for normal
elements (Theorem \ref{thm:normal-sets-positive-result}) and for
elements supported on transpositions (Theorem \ref{thm:Aldous}).
The example of non-generating sets, where the spectral gap is zero
(see Section \ref{sec:Introduction}), shows that $\std$ alone is
not sufficient. The latter is also demonstrated by the following example
relating to the generating set consisting of an $n$-cycle and a sole
transposition:
\begin{example}
\label{exa:n-cycle and (12)}Let 
\[
\Sigma={\textstyle \frac{1}{4}}\left[\mathrm{id}+\left(1~2\right)+\left(1~2\ldots n\right)+\left(1~2\ldots n\right)^{-1}\right]\in\mathbb{R}\left[S_{n}\right].
\]
In this case, one of $\left(n-2,2\right)$ or $\left(n-2,1,1\right)$
rules, at least up to a multiplicative constant factor on the spectral
gap, whereas $1-\lambda_{1}\left(\std\left(\Sigma\right)\right)$
is of a different order. More concretely, it is known \cite[Example 1, Page 2139]{diaconis1993comparison}
that 
\[
1-\lambda\left(S_{n},\Sigma\right)\ge\frac{1}{18n^{3}},
\]
and that $\frac{1}{n^{3}}$ is the right order of the spectral gap.
The proof of the upper bound for the spectral gap in \cite{diaconis1993comparison}
can be adapted as follows: Consider $\schk 2$, and let $f$ be a
function on its vertices given by $f\left(\left(x,y\right)\right)=\left(x-y\mod n\right)-\frac{n}{2}$.
As $f$ is orthogonal to the constant functions, its Rayleigh quotient
$\frac{\left\langle \left(I-A\right)f,f\right\rangle }{\left\langle f,f\right\rangle }$,
which roughly equals $\frac{6}{n^{3}}$, gives a lower bound on the
spectral gap of this Schreier graph, where $A$ is the adjacency operator
of the graph. The adjacency operator of this graph decomposes into
the irreps $\left(n-2,1,1\right)$, $\left(n-2,2\right)$, $\std=\left(n-1,1\right)$
and $\triv=\left(n\right)$, so the second eigenvalue comes from one
of $\left(n-2,1,1\right)$, $\left(n-2,2\right)$ or $\left(n-1,1\right)$.
However, $\left(n-1,1\right)$ is not possible because $1-\lambda_{1}\left(\std\left(\Sigma\right)\right)$
is of order $\frac{1}{n^{2}}$: this is the spectral gap of the connected
$4$-regular graph $\schk{}$, and it follows from the discrete Cheeger
inequality \cite[Thm.\ 4.11]{hoory2006expander} that $1-\lambda_{1}\left(\std\left(\Sigma\right)\right)\ge\frac{1}{32n^{2}}$.
(It can be shown, in fact, that $n^{2}\cdot\left(1-\lambda_{1}\left(\std\left(\Sigma\right)\right)\right)\overset{{\scriptscriptstyle n\rightarrow\infty}}{\longrightarrow}1$.)
Simulations for small values of $n$ suggest that $\left(n-2,1,1\right)$
rules for this $\Sigma$, but we do not know whether this is true
for all $n$.
\end{example}

Let us also mention another attempt to generalize Theorem \ref{thm:Aldous}
(Aldous' conjecture) which is attributed to Caputo in \cite[Page 301]{cesi2016few}
and is named there ``the $\alpha$-shuffles conjecture''. For $A\subseteq\left[n\right]=\left\{ 1,\ldots,n\right\} $,
let $J_{A}=\sum_{\sigma\in S_{n}~\mathrm{s.t.}~\mathrm{supp}\left(\sigma\right)\subseteq A}\sigma\in\mathbb{R}\left[S_{n}\right]$.
The conjecture states that for every linear combination with non-negative
coefficients of the $J_{A}$'s
\[
\Sigma=\sum\nolimits _{A\subseteq\left[n\right]}\alpha_{A}\cdot J_{A},~~~~\alpha_{A}\ge0
\]
the standard representation rules, namely, $\lambda\left(S_{n},\Sigma\right)=\lambda_{1}\left(\std\left(\Sigma\right)\right)$.
Some special cases of this conjecture were proven in work in progress
by Gil Alon, Gady Kozma and the second author.

\subsection*{Consequence 1: Random pairs of permutations expand}

It is well known that if $g,h\in S_{n}$ are chosen uniformly and
independently at random then they generate $A_{n}$ or $S_{n}$ with
probability tending to $1$ as $n\to\infty$ \cite{dixon1969probability}.
So with high probability $\lambda\left(S_{n},\frac{1}{4}\left(g+g^{-1}+h+h^{-1}\right)\right)<1$.
But do random pairs of $S_{n}$ also generate a family of expander
Cayley graphs? Namely,

\begin{ques}\label{ques:pairs expand}Is there some $\varepsilon>0$
so that for uniformly random $g,h\in S_{n}$ 
\begin{equation}
\mathrm{Prob}\left[1-\lambda\left(S_{n},{\textstyle \frac{1}{4}}\left(g+g^{-1}+h+h^{-1}\right)\right)\geq\varepsilon\right]\stackrel{n\to\infty}{\longrightarrow}1?\label{eq:random cay graphs of Sn}
\end{equation}

\end{ques}

This question is asked for all finite non abelian simple groups in
\cite[Problem 2.28]{lubotzky2012expander} (with uniform $\varepsilon$
for all), and is proven in \cite{breuillard2015expansion} for finite
simple group of Lie type and bounded Lie rank. However, the case of
$A_{n}$ remains wide open. The best known bound is given in \cite[Theorem 1.2]{helfgott2015random},
where \eqref{eq:random cay graphs of Sn} is proven with $\varepsilon$
replaced with $\frac{c_{1}}{n^{3}\left(\log n\right)^{c_{2}}}$ for
some absolute constants $c_{1},c_{2}$. In fact, it was a major challenge
to show that $A_{n}$ (or $S_{n}$) can even be turned into an expanding
family -- this was done in \cite{kassabov2007symmetric}, and it
is still not known whether two generators suffice for this goal (see
also \cite[Problem 3.2(a)]{breuillard2018expansion}).

Conjecture \ref{conj:kozma-puder}, if true, would yield a positive
answer to Question \ref{ques:pairs expand}. This is based on the
fact, proven in \cite{friedman1998action}, that for every fixed $k$
and $r$, the Schreier graphs depicting the action of $S_{n}$ on
$k$-tuples of different elements in $\left[n\right]$ with respect
to $r$ random permutations form an expander family with high probability,
namely, there is some $\varepsilon_{k}>0$ so that\footnote{In fact, there is evidence to that for every fixed $k$, the Schreier
graph $\schk k$ with $\Sigma$ symmetric and random of fixed size
$r\ge3$, is nearly Ramanujan with high probability, namely, that
for every $\varepsilon>0$, $\lambda\left(k,\Sigma\right)<2\sqrt{r-1}+\varepsilon$
with probability tending to $1$ as $n\to\infty$. This is true for
$k=1$ \cite{Fri08} and for $k=2$ \cite{bordenave2019eigenvalues},
and it seems plausible that the latter proof may be extended to every
fixed value of $k$.} 
\[
\mathrm{Prob}\left[1-\lambda\left(k,{\textstyle \frac{1}{4}}\left(g+g^{-1}+h+h^{-1}\right)\right)\ge\varepsilon_{k}\right]\stackrel{n\to\infty}{\longrightarrow}1.
\]
A small adaptation of the argument in \cite{friedman1998action}
allows one to prove an analogous statement for the \emph{signed} action
on $\ell$-tuples: introducing a random sign to the action on $\ell$-tuples
only adds more randomness and thus can only increase the expected
spectral gap. This explains how Conjecture \ref{conj:kozma-puder},
if true, yields a positive answer to Question \ref{ques:pairs expand}.
\begin{rem}
A weaker version of Conjecture \ref{conj:kozma-puder} states that
in order to approximate the spectral gap $\left|\Sigma\right|-\lambda\left(S_{n},\Sigma\right)$,
it is enough to consider, for every $n$, a set ${\cal L}_{n}\subseteq\widehat{S_{n}}$
of irreps with at most $\ell_{n}$ blocks outside the first row/column,
where $\ell_{n}$ grows slowly with $n$ (say, $\ell_{n}\sim c\cdot\log n$).
This may still be enough to yield a positive answer to Question \ref{ques:pairs expand}.
We remark that the analog result for the mere existence of a spectral
gap, namely, that $S_{n}$ has no $3\log n$-transitive subgroups
but itself and $A_{n}$, was proved by Jordan in 1895 \cite{jordan1895limite}.
Unlike the fact mentioned in Section \ref{sec:Introduction} that
there are no $4$-transitive subgroups of $S_{n}$ other than $A_{n}$
and $S_{n}$ (for $n\ge25$), this weaker result of Jordan has elementary
proofs, which, in particular, do not depend on the classification
of finite simple groups -- see \cite{babai1987degree} and the references
therein.
\end{rem}

\subsection*{Consequence 2: A bound on the diameter of Cayley graphs of $A_{n}$}

In \cite[Conjecture 1.7]{babai1992diameter}, Babai conjectured that
for every finite simple group $G$ and every generating set $\Sigma\subset G$,
\[
\mathrm{diam}\left(\cay\left(G,\Sigma\right)\right)\le\left(\log\left|G\right|\right)^{O\left(1\right)},
\]
where the implied constant is absolute. The special case of $G=A_{n}$
is referred to as a ``folklore'' conjecture. In this case, the conjecture
translates to that $\mathrm{diam}\left(\cay\left(A_{n},\Sigma\right)\right)\le n^{O\left(1\right)}$.
The best upper bound to date is quasi-polynomial and is due to Helfgott
and Seress \cite{helfgott2014diameter}.

Consider $\mathrm{Sch}\left(A_{n}\curvearrowright\smash{\left[n\right]_{k}},\Sigma\right)$,
the Schreier graph of $A_{n}$ on $k$-tuples with respect to $\Sigma\subset A_{n}$.
This graph has $n\left(n-1\right)\cdots\left(n-k+1\right)\le n^{k}$
vertices, and if $\Sigma$ is generating, it is connected. By \cite{aksoy2018maximum},
the spectral gap of the simple random walk on this graph is bounded
from below by $\left(1+o_{n}\left(1\right)\right)$$\frac{54}{n^{3k}}$
(slightly weaker bounds were known before -- see the references in
\cite{aksoy2018maximum}). If conjecture \ref{conj:kozma-puder} holds,
this gives a lower bound of the same order on the spectral gap of
the simple random walk on the Cayley graph $\cay\left(A_{n},\Sigma\right)$.
Finally, a lower bound of this kind on the spectral gap yields a polynomial
upper bound on the diameter of the Cayley graph: for instance, \cite[Equation (6.6)]{saloff2004random}
says that if $\left(1-\lambda_{2}\right)$ is the spectral gap of
the simple random walk on a Cayley graph of a finite group $G$, then
the diameter of this graph is at most 
\[
\frac{3\log\left|G\right|}{\sqrt{1-\lambda_{2}}}.
\]

\paragraph*{Acknowledgments}

We are grateful to Gady Kozma for highly valuable discussions. We
also thank Gil Alon and Avi Wigderson for beneficial comments. D.P.\ learned
about Aldous' conjecture and its proof by Caputo et al.\ in a mini-course
by Kozma as part of a summer school in Budapest in 2014. We thank
the organizers of the school, Miklós Abért, Ágnes Backhausz, Lászlo
Lóvász, Balázs Szegedy and Bálint Virág, for this wonderful event.
Computer simulations mentioned in this work were carried out using
Sage. The research was supported by the Israel Science Foundation,
ISF grant 1031/17 of O.P.\ and ISF grant 1071/16 of D.P.\bibliographystyle{alpha}
\bibliography{normal-sets}

\noindent \noun{Ori Parzanchevksi, Einstein School of Mathematics,
The Hebrew University}\\
\texttt{parzan@math.huji.ac.il\medskip{}
}

\noindent \noun{Doron Puder, School of Mathematical Sciences, Tel
Aviv University}\\
\texttt{doronpuder@gmail.com}
\end{document}